\newtheorem{theorem}{Theorem}[section]
\newtheorem{proposition}[theorem]{Proposition}
\newtheorem{hypothesis}[theorem]{Hypothesis}
\newtheorem{definition}[theorem]{Definition}
\newtheorem{corollary}[theorem]{Corollary}
\newtheorem{lemma}[theorem]{Lemma}
\newtheorem{sub-lemma}[theorem]{Sub-Lemma}
\newtheorem{remark}[theorem]{Remark}
\def\A{\mathcal{A}}
\def\M{\mathcal{M}}
\def\N{\mathcal{N}}
\def\R{\mathcal{R}}
\def\V{\mathcal{V}}
\def\EE{\mathbb{E}}
\def\PP{\mathbb{P}}
\def\RR{\mathbb{R}}
\DeclareMathOperator{\diam}{diam}
\DeclareMathOperator{\dist}{dist}
\DeclareMathOperator{\length}{length}
\def\M{\mathcal{M}}
\let\eps=\varepsilon
\def\RR{{\mathbb R}}
\def\1{{{\mathit 1} \!\!\>\!\! I} }
\renewcommand{\liminf}{\mathop{{\underline {\hbox{{\rm lim}}}}}}
\renewcommand{\limsup}{\mathop{{\overline {\hbox{{\rm lim}}}}}}
\begin{document}

\title[Spatio-temporal Poisson processes]{Spatio-temporal Poisson processes for visits to small sets}
\author{Fran\c{c}oise P\`ene \and Beno\^\i t Saussol}
\address{1)Universit\'e de Brest, Laboratoire de
Math\'ematiques de Bretagne Atlantique, CNRS UMR 6205, Brest, France\\
2)Fran\c{c}oise P\`ene is supported by the IUF.}
\email{francoise.pene@univ-brest.fr}
\email{benoit.saussol@univ-brest.fr}
\keywords{}
\subjclass[2000]{Primary: 37B20}
\begin{abstract}
For many measure preserving dynamical systems $(\Omega,T,m)$ the successive hitting times to a small set is well approximated by a Poisson process on the real line. In this work we define a new process obtained from recording not only the successive times $n$ of visits to a set $A$, but also the
position $T^n(x)$ in $A$ of the orbit, in the limit where $m(A)\to0$. 

We obtain a convergence of this process, suitably normalized, to a Poisson point process in time and space under some decorrelation condition. We present several new applications to hyperbolic maps and SRB measures, including the case of a neighborhood of a periodic point, and some billiards such as Sinai billiards, Bunimovich stadium and diamond billiard.
\end{abstract}
\date{\today}
\maketitle
\bibliographystyle{plain}
\tableofcontents

\section{Introduction}

The study of Poincar\'e recurrence in dynamical systems such as occurrence of rare events, distribution of return time, hitting time and Poisson law has grown to an active field of research, in deep relation with extreme values of stochastic processes;  (see~\cite{book} and references therein).

Let $(\Omega,\mathcal F,\mu,T)$ be a probability preserving dynamical system.
For every $A\in\mathcal F$, we set $\tau_A$ as the first hitting time to A, i.e.
$$\tau_A(x):=\inf\{n\ge 1\, :\, T^nx\in A\} \, .$$
In many systems the  behavior of the successive visits of a typical orbit $(T^nx)_n$ to the sets
$A_\varepsilon$, with $\mu(A_\varepsilon)\rightarrow 0+$ is often asymptotic, when suitably normalized, to a Poisson process. 
Such results were first obtained by Doeblin \cite{dob} for the Gauss map, Pitskel \cite{pit} considered the case of Markov chains. The most recent developments concern non uniformly hyperbolic dynamical systems, for example \cite{stv,cc,fft,hv,wasilewska} just to mention a few of them. 

An important issue of our work is that we take into account not only the times of successive visits to the set, 
but also the position of the successive visits in $A_\eps$ within each return. 
This study was first motivated by a question asked to us by D. Sz\'asz and I.P. T\'oth for diamond billiards, that we address in Section~\ref{sec:bd}. Beyond its own interest,
Poisson limit theorems for such spatio-temporal processes have been recently use to prove convergence to L\'evy stable processes in dynamical systems; See~\cite{marta} and subsequent works such as~\cite{MZ}. Let us indicate that, at the same time and independently
of the present work, analogous processes have been investigated
in \cite{ffm}.

We thus consider these events in time and space 
$$\{(n,T^nx)\ :\ n\ge 1,\ T^nx\in A_\varepsilon\}\subset
[0,+\infty)\times \Omega,$$
that we will normalize both in time and space, as follows.

\includegraphics[scale=0.6]{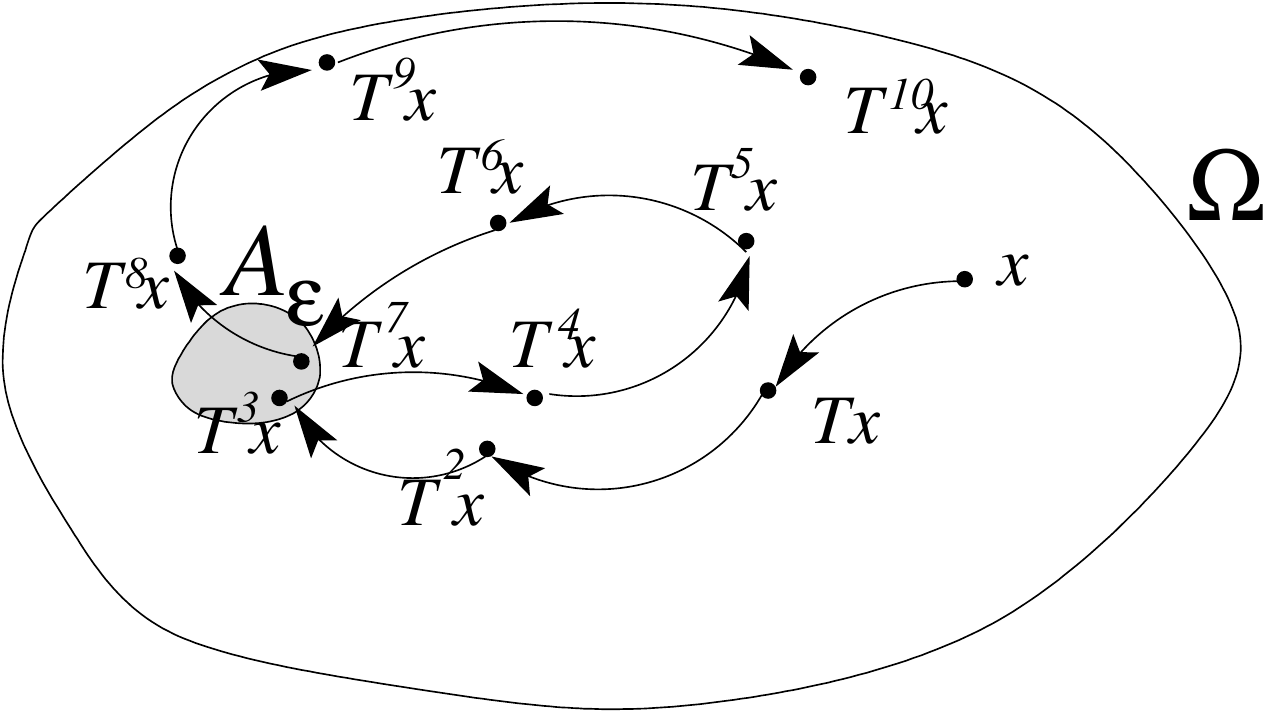}

The successive visit times have order $1/\mu(A_\varepsilon)$, which gives the normalization in time. 
For the space, we use a family of normalization functions $H_\varepsilon:A_\varepsilon\rightarrow V$. A typical choice when $\Omega$ is Euclidean would be to take for $A_\eps$ an $\eps$-ball an $H_\eps$ a zoom which sends $A_\eps$ to size one.
Another choice of extremal processes flavor would be to consider $A_\eps$ as a rare event, and $H_\eps$ would be the strength of the event.
We will then consider the family of point processes $(\mathcal N_\varepsilon)_\varepsilon$ on $[0,+\infty)\times V$ given by
\begin{equation}\label{PointProcess}
 \mathcal N_\varepsilon(x):=\sum_{n\ge 1\ :\ T^n(x)\in A_\varepsilon} \delta_{(n\mu(A_\varepsilon),H_\varepsilon(T^n(x)))} .
\end{equation}

\includegraphics[scale=0.45]{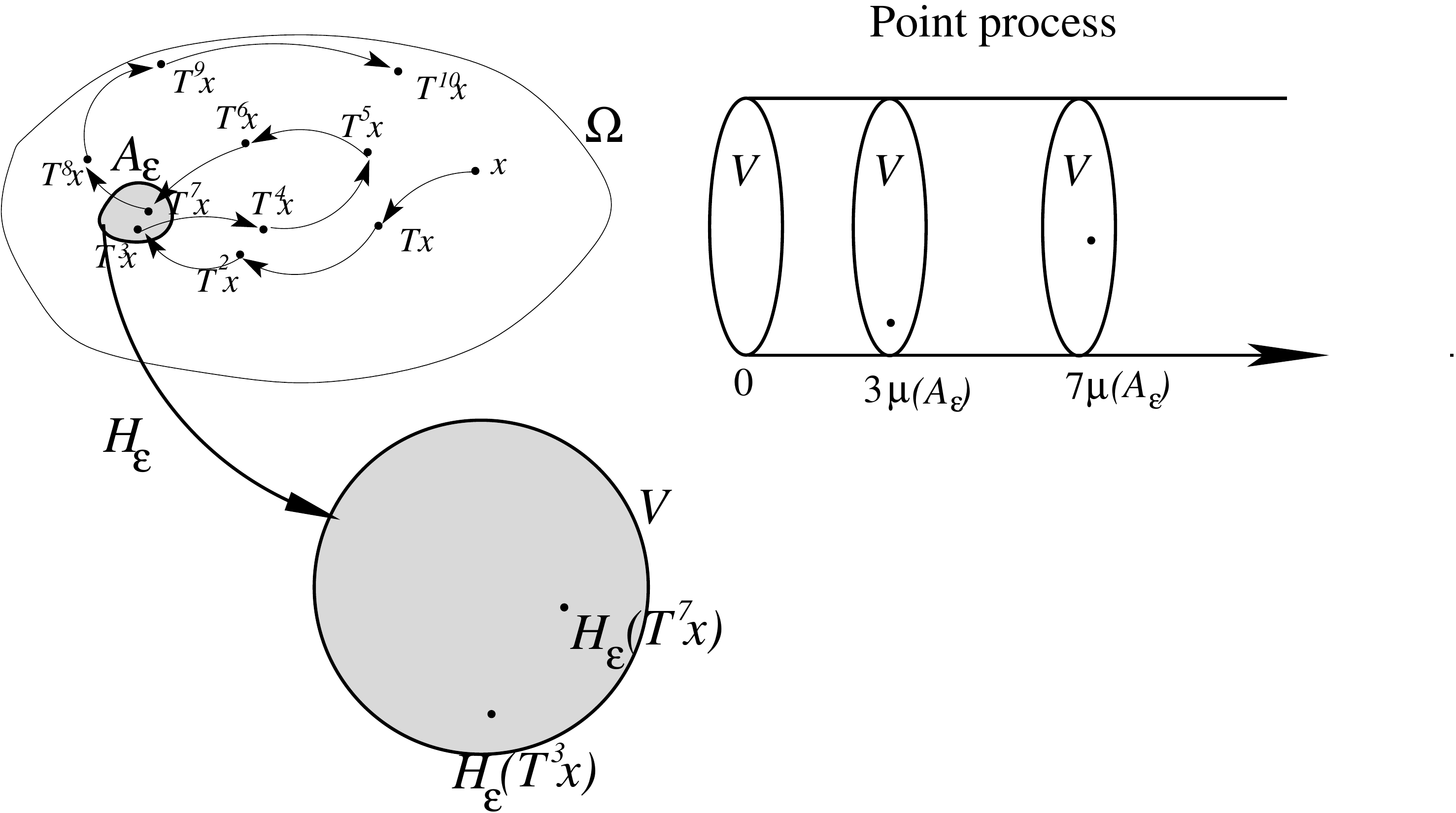}

For any measurable subset $B$ of $[0,+\infty)\times V$,
$$  \mathcal N_\varepsilon(x) (B)=\#\{n\ge 1\ :\ T^n(x)\in A_\varepsilon,\ (n\mu(A_\varepsilon),H_\varepsilon(T^n(x)))\in B\}.$$
We will simply write $\mathcal N_\varepsilon(B)$ for the
measurable function $\mathcal N_\varepsilon(B):x\mapsto \mathcal N_\varepsilon(x) (B)$.

The main result of the paper provides general conditions under which the point process
$\mathcal N_\varepsilon$ is well estimated by a Poisson point process $\mathcal P_\varepsilon$. 
This virtually contains all spatial information given by the space coordinate $H_\eps$, and the continuous mapping theorem could in principle be applied to recover many properties related to recurrences in $A_\eps$.  We then present several applications, with different maps, flows, and sets $A_\eps$.

The structure of the paper is as follows:
 
In Section~\ref{sec:gr} we present a general result which gives a convergence to a spatio-temporal point process in a discrete time dynamical system, under some probabilistic one-step decorrelation condition. Then we provide a method to transfer this result to continuous time flows. All these general results are proved in Subsection~\ref{pgr}.

In Section~\ref{common} we introduce a framework, adapted for systems modeled by a Young tower, under which one can apply the general results. 
In Subsection~\ref{sec:voisin} we check these conditions in the case of hitting to balls in a Riemmanian manifold.

In Section~\ref{billiard} we present several applications of the common framework mentioned above to two different types of billiards: the Sina\"{\i} billiards with finite horizon, the Bunimovich stadium billiards.

In Section~\ref{periodic} we study the case of balls centered around a periodic point in a uniformly hyperbolic system; as a byproduct we recover a compound Poisson distribution for the temporal process.

Section~\ref{sec:bd} consists in a fine study of visits in the successive visits in the vicinity of the corner in a diamond shaped billiard.

\section{Poisson process under a one-step decorrelation assumption}\label{sec:gr}

\subsection{Results for discrete-time dynamical systems}\label{sec:gene}
Let $(\Omega,\mathcal F,\mu,T)$ be a probability preserving dynamical system.
Let $(A_\varepsilon)_\varepsilon$ be a family of measurable subsets of $\Omega$
with $\mu(A_\varepsilon)\rightarrow 0+$ as $\varepsilon\rightarrow 0$.
Let $V$ be a locally compact metric space endowed with its Borel $\sigma$-algebra $\mathcal V$.
Let $(H_\varepsilon)_\varepsilon$ be a family of measurable functions
$H_\varepsilon:A_\varepsilon\rightarrow V$.
We set $E:=[0,+\infty)\times V$ and we endow it with its Borel
$\sigma$-algebra $\mathcal E=\mathcal B([0,+\infty))\otimes \mathcal V$.
We also consider the family of measures $(m_\eps)_\eps$ on $(V,\mathcal V)$ defined by
\begin{equation}
m_\eps:=\mu(H_\eps^{-1}(\cdot)|A_\eps) 
\end{equation}
and 
$\mathcal W$ a family stable by finite unions and intersections of relatively compact open 
subsets of $V$, that generates the $\sigma$-algebra $\mathcal V$.
Let $\lambda$ be the Lebesgue measure on $[0,\infty)$.

We will approximate the point process defined by \eqref{PointProcess} by a Poisson point process
on $ E$.
Given $\eta$ a $\sigma$-finite measure on $(E,\mathcal E)$, 
recall that a process $\mathcal N$ is a Poisson point process on $E$ of intensity $\eta$ if
\begin{enumerate}
\item $\mathcal N$ is a point process (i.e. $\mathcal N=\sum_i\delta_{x_i}$ with $x_i$ $E$-valued random variables),
\item For every pairwise disjoint Borel sets $B_1,...,B_n\subset E$, the random variables
$\mathcal N(B_1),...,\mathcal N(B_n)$ are independent Poisson
random variables with respective parameters $\eta(B_1),...,\eta(B_n)$. 
\end{enumerate}
Let $M_p(E)$ be the space of all point measures defined on $E$, endowed with the topology of vague convergence ; it is metrizable as a complete separable metric space.
A family of point processes $(\mathcal N_\eps)_\eps$ converges in distribution to $\mathcal N$ if for any bounded continuous function $f\colon  M_p(E)\to \RR$ the following convergence holds true 
\begin{equation}\label{cvdf}
\EE(f(\mathcal N_\eps))\to \EE( f(\mathcal N)),\quad \mbox{as }\varepsilon\rightarrow 0.
\end{equation}

For a collection $\mathcal A$ of measurable subsets of $\Omega$,
we define the following quantity:
\begin{equation}\label{defDelta}
\Delta(\A)
 := \sup_{A\in \A,B\in\sigma(\cup_{n=1}^{\infty}T^{-n}\A)} \left|\mu(A\cap B)-\mu(A)\mu(B)\right|.
\end{equation}
Our main general result is the following one.

\begin{theorem}\label{THM}
We assume that 
\begin{enumerate}
\item for any finite subset $\mathcal W_0$ of $\mathcal W$
we have $\Delta(H_\eps^{-1}\mathcal W_0)=o(\mu(A_\varepsilon))$,
\item there exists a measure $m$ on $(V,\mathcal V)$
such that for every $F\in\mathcal W$, $m(\partial F)=0$ and 
   $\lim_{\varepsilon\to 0}\mu(H_\eps^{-1}(F)|A_\eps)$ converges
 to $m(F)$.
\end{enumerate}
Then the family of
point processes $(\mathcal N_\varepsilon)_\varepsilon$
converges strongly\footnote{i.e. with respect to any probability measure absolutely continuous w.r.t. $\mu$} in distribution
 to a Poisson process $\mathcal P$ of intensity $\lambda\times m$. 

In particular, for every
relatively compact open $B\subset E$ such that $(\lambda\times m)(\partial B)=0$,
$(\mathcal N_\varepsilon(B))_\varepsilon$ converges in distribution
to a Poisson random variable with parameter $(\lambda\times m)(B)$. 
\end{theorem}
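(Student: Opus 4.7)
The plan is to invoke the standard Kallenberg/Grigelionis criterion for convergence to a Poisson point process with diffuse intensity on $E$: it suffices to exhibit a $\pi$-system $\mathcal R$ of relatively compact sets generating $\mathcal E$ such that $\mathbb E[\mathcal N_\varepsilon(R)]\to(\lambda\times m)(R)$ for every $R\in\mathcal R$ with $(\lambda\times m)(\partial R)=0$, and $\mathbb P(\mathcal N_\varepsilon(B)=0)\to e^{-(\lambda\times m)(B)}$ for every finite disjoint union $B$ of such $R$'s. I would take $\mathcal R:=\{[a,b)\times F:0\le a<b,\,F\in\mathcal W\}$; this is a $\pi$-system generating $\mathcal E$ because $\mathcal W$ is $\cap$-stable and generates $\mathcal V$, and its elements are relatively compact since those of $\mathcal W$ are. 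The intensity is then immediate: writing $A_\varepsilon^F:=H_\varepsilon^{-1}(F)\subset A_\varepsilon$, $T$-invariance gives
$$\mathbb E_\mu[\mathcal N_\varepsilon([a,b)\times F)]=\#\{n\ge 1:n\mu(A_\varepsilon)\in[a,b)\}\cdot\mu(A_\varepsilon^F)\to(b-a)m(F)$$
by hypothesis~(ii) applied to $m_\varepsilon(F)=\mu(A_\varepsilon^F)/\mu(A_\varepsilon)$.

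For the void probability I would first treat $R=[0,t)\times F$. Setting $\phi:=\mathbf 1_{A_\varepsilon^F}$, $p:=\mu(A_\varepsilon^F)$, $N:=\lfloor t/\mu(A_\varepsilon)\rfloor$, and $u_k:=\mathbb E_\mu[\prod_{n=1}^k(1-\phi\circ T^n)]$, I would extract the factor $(1-\phi\circ T)$ and use $T$-invariance to write
$$u_k=u_{k-1}-\mu(A_\varepsilon^F\cap C_{k-1}),\quad C_{k-1}:=\bigcap_{n=1}^{k-1}T^{-n}(A_\varepsilon^F)^c\in\sigma(\textstyle\bigcup_{n\ge1}T^{-n}H_\varepsilon^{-1}\{F\}).$$
Hypothesis~(i) with $\mathcal W_0=\{F\}$ then yields the one-step recursion $u_k=(1-p)u_{k-1}+O(\Delta(H_\varepsilon^{-1}\{F\}))$, hence $u_N=(1-p)^N+O(N\Delta(H_\varepsilon^{-1}\{F\}))$. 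Since $Np\to tm(F)$ and $N\Delta=o(1)$ by hypothesis~(i), $u_N\to e^{-tm(F)}$. For a general disjoint union $B=\bigsqcup_j[a_j,b_j)\times F_j$ I would enlarge $\mathcal W_0$ to a finite subfamily of $\mathcal W$ containing the $F_j$'s and all their finite unions (possible since $\mathcal W$ is $\cup$-stable), then run the same recursion with $p$ replaced at step $k$ by $\mu(S_k)$, where $S_k:=\bigcup_{j:k\mu(A_\varepsilon)\in[a_j,b_j)}A_\varepsilon^{F_j}$. This yields $u_N=\prod_k(1-\mu(S_k))+o(1)$, which converges to $e^{-(\lambda\times m)(B)}$ because $\max_k\mu(S_k)=O(\mu(A_\varepsilon))\to 0$ and $\sum_k\mu(S_k)\to(\lambda\times m)(B)$.

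To upgrade from $\mu$ to any $\nu\ll\mu$ (the strong convergence in the footnote), I would exploit that shifting $x\mapsto T^kx$ changes $\mathcal N_\varepsilon(x)$ by at most the number of visits to $A_\varepsilon$ in $[1,k]$, which is negligible once $k\mu(A_\varepsilon)\to 0$; combined with the iterated one-step decorrelation pushed back to the initial distribution, this allows one to replace $\nu$ by $\mu$ up to an asymptotically vanishing error, with a routine approximation handling densities that are not $L^\infty$. The last sentence of the theorem is then immediate from the Portmanteau theorem applied to the set $B$. The main obstacle I anticipate is the bookkeeping in the disjoint-union step, namely ensuring that a single finite $\mathcal W_0$ can be chosen so that hypothesis~(i) applies uniformly to every event appearing along the recursion, and that the aggregate error from the $N$ one-step approximations is genuinely $o(1)$.
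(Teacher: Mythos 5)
Your proof follows essentially the same route as the paper's: Kallenberg's criterion reduced to intensity and void probabilities, with the void probability handled by an iterated one-step decorrelation recursion costing $O(N\Delta)=o(1)$, and the upgrade to $\nu\ll\mu$ via an argument equivalent to Zweim\"uller's lemma (which the paper cites directly). The only differences are cosmetic: you inline the recursion whereas the paper isolates it as a separate proposition stated for blocks $\{\forall i,\ \tau_{A_i}\circ T^{p_i}>q_i\}$, and you use half-open intervals while the paper uses open rectangles and disposes of the degenerate vertical strips separately; your observation that $\mathcal W$ being $\cup$-stable lets one enlarge $\mathcal W_0$ to cover the time-overlapping unions $S_k$ is exactly the point that makes the paper's rearrangement of endpoints work.
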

We emphasize that Theorem~\ref{THM} remains valid when $\eps$ is restricted to a 
subsequence $\eps_k\to0$, in the assumptions and the conclusion.

Condition (ii) is equivalent to the fact that the family of measures 
$(m_\eps)_\eps$
converges vaguely\footnote{This means that for any continuous test function $\varphi$ with compact support the integrals $m_{\eps}(\varphi)$ converge to $m(\varphi)$. In particular $m$ may not be a probability because of a loss of mass at infinity.} to $m$. This is sometimes too strong, especially when the $m_\eps$ are not absolutely continuous.
Nevertheless, without this hypothesis, our point process $\mathcal N_\varepsilon$
may remain well approximated by a Poisson process $\mathcal P_\varepsilon$ with varying intensities $\lambda\times m_\eps$, in a sense that can be made precise.
 
\begin{theorem}\label{THM1}
We assume that for any vague limit point $m$ of $(m_\eps)_\eps$ and for any sequence $\mathcal E=(\eps_k)_k$ converging to $0$ achieving the above limit,
there exists a family $\mathcal W_\mathcal E$ of relatively compact open 
subsets of $V$, stable by finite unions and intersections, that generates the $\sigma$-algebra $\mathcal V$
and that
\begin{enumerate}
\item for any finite subset $\mathcal W_0$ of $\mathcal W_\mathcal E$
we have $\Delta(H_{\eps_k}^{-1}\mathcal W_0)=o(\mu(A_{\varepsilon_k}))$,
\item for every $F\in\mathcal W_\mathcal E$, $m(\partial F)=0$.
\end{enumerate}
Then the family of point processes $(\mathcal N_\varepsilon)_\varepsilon$
is approximated strongly in distribution by a family of Poisson processes $(\mathcal P_\eps)_\eps$ of intensities $\lambda\times m_\eps$, in the sense that for any $\nu\ll \mu$ and 
for every continuous and bounded $f\colon M_p(E)\to \RR$ 
\begin{equation}\label{approx}
\EE_\nu(f(\N_\eps)) - \EE(f( \mathcal P_\eps)) \to 0.
\end{equation}
 \end{theorem}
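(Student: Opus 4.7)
The approach is a subsequence argument that reduces Theorem~\ref{THM1} to Theorem~\ref{THM}. Since each $m_\eps$ is a probability measure on the locally compact metric space $V$, the family $(m_\eps)_\eps$ is vaguely sequentially relatively compact in the space of Radon sub-probability measures on $V$ (mass may of course escape to infinity in the vague limit). Assume for contradiction that the conclusion \eqref{approx} fails: there exist a measure $\nu\ll\mu$, a bounded continuous $f\colon M_p(E)\to \RR$, some $\delta>0$ and a sequence $\eps_j\to 0$ along which
\[
|\EE_\nu(f(\N_{\eps_j}))-\EE(f(\mathcal P_{\eps_j}))|\ge \delta.
\]
By vague compactness, extract a further subsequence $\mathcal E=(\eps_k)_k$ along which $m_{\eps_k}$ converges vaguely to some Radon sub-probability $m$.

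By the hypothesis of Theorem~\ref{THM1} applied to this vague limit point, we are furnished with a family $\mathcal W_{\mathcal E}$ satisfying (i)--(ii). Each $F\in\mathcal W_{\mathcal E}$ is relatively compact open with $m(\partial F)=0$, and vague convergence together with this boundary condition yields the Portmanteau-type statement
\[
m_{\eps_k}(F)=\mu(H_{\eps_k}^{-1}(F)|A_{\eps_k})\longrightarrow m(F).
\]
Combined with (i), this matches the hypotheses of Theorem~\ref{THM} for the subsequence $\eps_k$ with $\mathcal W$ replaced by $\mathcal W_{\mathcal E}$. Invoking the subsequential version of Theorem~\ref{THM} noted immediately after its statement, $\N_{\eps_k}$ converges strongly in distribution to a Poisson point process $\mathcal P$ of intensity $\lambda\times m$; in particular $\EE_\nu(f(\N_{\eps_k}))\to\EE(f(\mathcal P))$.

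It remains to check that $\EE(f(\mathcal P_{\eps_k}))\to\EE(f(\mathcal P))$ as well. Vague convergence $m_{\eps_k}\to m$ on $V$ passes to vague convergence $\lambda\times m_{\eps_k}\to\lambda\times m$ on $E=[0,\infty)\times V$. For Poisson point processes, the Laplace functional at a nonnegative continuous compactly supported test function $\varphi$ on $E$ reads
\[
\EE\bigl(\exp(-\mathcal P_{\eps_k}(\varphi))\bigr)=\exp\Bigl(-\int_E(1-e^{-\varphi})\,d(\lambda\times m_{\eps_k})\Bigr),
\]
and $1-e^{-\varphi}$ is itself continuous and compactly supported, so vague convergence of the intensities gives convergence of these Laplace functionals. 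This is the standard characterization of convergence in distribution in $M_p(E)$, hence $\mathcal P_{\eps_k}\to\mathcal P$ in distribution and $\EE(f(\mathcal P_{\eps_k}))\to\EE(f(\mathcal P))$, contradicting the lower bound $\delta$ and proving \eqref{approx}.

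The main obstacle is essentially the bookkeeping of subsequences: one must extract a vaguely convergent subsequence of $(m_\eps)_\eps$, verify that the family $\mathcal W_{\mathcal E}$ selected by the hypothesis is admissible in the subsequential Theorem~\ref{THM} (which boils down to the Portmanteau upgrade $m_{\eps_k}(F)\to m(F)$ on sets with $m$-null boundary), and handle $\mathcal P_\eps$ via Poisson convergence under vague convergence of intensities. All the genuine dynamical content has been isolated in Theorem~\ref{THM}, and the Laplace-functional step is classical, so the remaining work is essentially formal.
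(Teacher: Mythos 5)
Your proposal is correct and follows essentially the same route as the paper: a contradiction/subsequence argument that extracts a vaguely convergent subsequence of $(m_\eps)_\eps$ and then invokes the subsequential version of Theorem~\ref{THM} to force both $\N_{\eps_k}$ and $\mathcal P_{\eps_k}$ toward the same Poisson limit. The only difference is that you make explicit two steps the paper leaves implicit, namely the Portmanteau upgrade $m_{\eps_k}(F)\to m(F)$ on $m$-continuity sets and the Laplace-functional argument showing $\mathcal P_{\eps_k}\Rightarrow\mathcal P$ from vague convergence of intensities.
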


\begin{proof}[Proof of Theorem~\ref{THM1}]
Suppose that \eqref{approx} does not hold for some $f$.
Then there exists $\vartheta>0$ and a sequence $\eps_k\to0$ such that for all $k$, 
\begin{equation}\label{badf}
|\EE_\nu(f(\N_{\eps_k})) - \EE(f( \mathcal P_{\eps_k}))|>\vartheta.
\end{equation}
Up to taking a subsequence if necessary, we may assume that $(m_{\eps_k})_k$ converges to some $m$.
Applying Theorem~\ref{THM} with the sequence $(\eps_k)_k$ we get that $(\N_{\eps_k})_k$ converges to a Poisson point process of intensity $\lambda\times m$, and $(\mathcal P_{\eps_k})_k$ as well,  which contradicts \eqref{badf}.
\end{proof}

This proof shows that the possible non convergence of the measures $m_\eps$ is not a serious problem. {\bf In the rest of the paper, we will assume without loss of generality - to simplify the exposition of our general results - that the intensity measures involved in the results converge to a unique limit}. Clearly, if it is not the case, one can always reduces the problem to that case by passing through a subsequence.
\subsection{Application to special flows}\label{sec:specialflow}
In this section we show how to pass from the discrete time setting to the continuous time one.
Given $(\Omega,\mathcal F,\mu,T)$ and an integrable function
$\tau:\Omega\rightarrow(0,+\infty)$, the special flow over  $(\Omega,\mathcal F,\mu,T)$ with roof function $\tau:\Omega\rightarrow(0,+\infty)$ 
is the flow $(\mathcal M,\mathcal T,\nu,(Y_t)_t)$ defined as follows:
$$\mathcal M:=\{(x,t)\in \Omega\times [0,+\infty)\ :\ t<\tau(x)\} \, ,$$
$\mathcal T$ is the trace in $\mathcal M$ of the product $\sigma$-algebra $\mathcal F\otimes\mathcal B([0,+\infty))$
$$\nu:=\left(\frac{\mu\times\lambda}{\int_\Omega\tau\, d\mu}\right)_{|\mathcal T}\, ,$$
where $\lambda$ is the Lebesgue measure on $[0,+\infty)$ and 
$Y_s(x,t)=(x,t+s)$ with the identification $(x,\tau(x))\equiv (T(x),0)$,
i.e.
$$Y_s(x,t)=\left(T^{n_{s}(x,t)},t+s-\sum_{k=0}^{n_{s}(x,t)
-1}\tau\circ T^k(x)\right)\, ,$$
with $n_s(x,t):=\sup\{ n\ :\ \sum_{k=0}^{n-1}\tau\circ T^k(x)\le t+s\}$ the number of visits of the orbit $(Y_u(x,t))_{u\in(0,s)}$ to $\Omega\times \{0\}$ before time $t$.
We will write
\[
\forall x\in \Omega,\quad \forall n\in\mathbb N,\quad
S_n\tau(x):=\sum_{k=0}^{n-1}\tau\circ T^k(x)\quad\mbox{and}\quad
   \bar\tau:=\int_\Omega \tau(x)\, d\mu(x).
\]
We define also the canonical projection $\Pi:\mathcal M\rightarrow \Omega$
by $\Pi(x,t)= x$.

Let $(\mathcal A_\varepsilon)_\varepsilon$ be a sequence of subsets of $\mathcal M$. 
We are interested in a process that records the times where at least one entrance into $\mathcal A_\eps$ occurs between two consecutive returns to the base. This only depends on the projection $A_\varepsilon:=\Pi \mathcal A_\varepsilon$. 
Let $V$ be a locally compact metric space endowed with its Borel $\sigma$-algebra $\mathcal V$.
Let $(H_\eps)_\eps$ be a family of measurable functions from $A_\eps$ to $V$. 
This leads us to the definition
$$ \mathfrak N_\varepsilon 
(y):=\sum_{t>0\ :\ Y_t(y) \in A_\eps\times\{0\} }
    \delta_{(t\mu( A_\varepsilon)/\bar\tau,H_\varepsilon(\Pi Y_t(y)))}. 
    $$

\begin{theorem}\label{THMflow}
Assume that $(\mathcal N_\varepsilon)_\varepsilon$, defined on $(\Omega,\mu,T)$
by \eqref{PointProcess} with $A_\varepsilon:=\Pi \mathcal A_\varepsilon$ and $H_\varepsilon$ given above, converges in distribution, with respect to some probability measure $\tilde\mu\ll\mu$, to a Poisson point process of intensity $\lambda\times m$, where $m$ is some measure on $(V,\mathcal V)$.

Then the family of point processes $(\mathfrak N_\varepsilon)_\varepsilon$
converges strongly in distribution to a Poisson process $\mathcal P$ of intensity $\lambda\times m$.
\end{theorem}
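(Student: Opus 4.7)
The approach is to express $\mathfrak N_\varepsilon$ directly in terms of the base dynamics and then transfer the hypothesized Poisson limit for $\mathcal N_\varepsilon$, showing that the time deformation inherent to the suspension is asymptotically negligible. Observing that the orbit $(Y_t(x,s))_{t>0}$ hits $\Omega\times\{0\}$ exactly at the times $t_n=S_n\tau(x)-s$, $n\ge1$, landing at $(T^nx,0)$, one obtains
\[
\mathfrak N_\varepsilon(x,s)
=
\sum_{n\ge1,\,T^nx\in A_\varepsilon}\delta_{\bigl((S_n\tau(x)-s)\mu(A_\varepsilon)/\bar\tau,\,H_\varepsilon(T^nx)\bigr)},
\]
which has the same spatial marks as $\mathcal N_\varepsilon(x)=\sum_{n\ge1,\,T^nx\in A_\varepsilon}\delta_{(n\mu(A_\varepsilon),H_\varepsilon(T^nx))}$; only the time coordinate is deformed, by $\mu(A_\varepsilon)[(S_n\tau(x)-n\bar\tau)/\bar\tau-s/\bar\tau]$.

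First I would control this deformation uniformly on compact time windows. Since $\tau\in L^1(\mu)$, Birkhoff's ergodic theorem gives $S_n\tau(x)=n\bar\tau+o(n)$ for $\mu$-a.e.\ $x$. A standard splitting at a threshold $n_0(x,\eta)$ above which $|S_n\tau(x)-n\bar\tau|\le\eta n$, combined with $s<\tau(x)$ and $\mu(A_\varepsilon)\to0$, shows that for every $T_0>0$ and $\nu$-a.e.\ $(x,s)$,
\[
\sup_{1\le n\le T_0/\mu(A_\varepsilon)}\bigl|(S_n\tau(x)-s)\mu(A_\varepsilon)/\bar\tau-n\mu(A_\varepsilon)\bigr|\xrightarrow[\varepsilon\to0]{}0.
\]
Hence the atoms of $\mathfrak N_\varepsilon(x,s)$ and of $\mathcal N_\varepsilon(x)$ in $[0,T_0]\times V$ become pairwise arbitrarily close in time.

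Second, I would convert this into convergence of Laplace functionals (which characterize point processes). For $\phi\in C_c^+(E)$ supported in $[0,T_0]\times V$, the uniform continuity of $\phi$ together with the previous estimate yields
\[
\bigl|\mathfrak N_\varepsilon(x,s)(\phi)-\mathcal N_\varepsilon(x)(\phi)\bigr|\le\omega_\varepsilon(x,s)\cdot\mathcal N_\varepsilon(x)\bigl([0,T_0+1]\times V\bigr),
\]
with $\omega_\varepsilon(x,s)\to0$ $\nu$-almost surely. Since $\mathcal N_\varepsilon([0,T_0+1]\times V)$ is tight (its mean under $\mu$ equals $T_0+1$ by $T$-invariance, and $\tilde\nu\ll\nu$ implies $\tilde\nu\circ\Pi^{-1}\ll\mu$), the product tends to zero in probability; applying the $1$-Lipschitz property of $y\mapsto e^{-y}$ and bounded convergence gives, for any $\tilde\nu\ll\nu$,
\[
\EE_{\tilde\nu}\bigl[e^{-\mathfrak N_\varepsilon(\phi)}\bigr]-\EE_{\tilde\nu\circ\Pi^{-1}}\bigl[e^{-\mathcal N_\varepsilon(\phi)}\bigr]\to 0.
\]

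It remains to combine this with the hypothesized Poisson limit for $\mathcal N_\varepsilon$ to conclude that the Laplace functional converges to $\exp(-\int(1-e^{-\phi})\,d(\lambda\times m))$, which characterizes $\mathcal P$. The main obstacle I anticipate is precisely this last identification: the hypothesis only supplies convergence of $\mathcal N_\varepsilon$ under one particular $\tilde\mu$, whereas $\tilde\nu\circ\Pi^{-1}$ is a priori a different absolutely continuous measure. The upgrade from single-measure to strong convergence is a standard feature of Poisson limits for hitting-type processes---it follows, for instance, by iterating the hypothesis along initial orbital pieces and exploiting the $T$-invariance of $\mu$ together with the asymptotic independence built into the Poisson structure---but it must be written down with some care. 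In applications the hypothesis is itself verified via Theorem~\ref{THM}, which already delivers strong convergence of $\mathcal N_\varepsilon$ and thus sidesteps this technicality.
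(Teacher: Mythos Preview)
Your approach is essentially the same as the paper's: express $\mathfrak N_\varepsilon(x,s)$ as a time-deformed version of $\mathcal N_\varepsilon(x)$, then use Birkhoff's theorem for $\tau$ together with $\mu(A_\varepsilon)\to0$ to show the deformation is asymptotically trivial on compact time windows. The paper phrases this as $\mathfrak N_\varepsilon(y)=(\psi_{\varepsilon,y})_*\mathcal N_\varepsilon(\Pi y)$ with $\psi_{\varepsilon,y}(t,z)=((S_{\lfloor t/\mu(A_\varepsilon)\rfloor}\tau(\Pi y)-s)\mu(A_\varepsilon)/\bar\tau,z)$ and concludes in one line; your Laplace-functional write-up is a perfectly good way to make that step explicit.

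The obstacle you flag---upgrading convergence from the single measure $\tilde\mu$ to any $\tilde\nu\circ\Pi^{-1}\ll\mu$---is exactly what the paper handles first, and it does so by invoking Zweim\"uller's mixing limit theorem \cite{Zweimuller:2007}: for an ergodic measure-preserving system, if a sequence of random variables of the form $R_\varepsilon\circ T^{?}$-shifted type (in particular the $\mathcal N_\varepsilon$ here, which are asymptotically $T$-invariant in distribution) converges in distribution under one probability $\ll\mu$, then it converges under every probability $\ll\mu$ to the same limit. This is precisely the ``iterating along initial orbital pieces'' idea you sketch, packaged as a citable theorem. The paper applies it twice: once to pass from $\tilde\mu$ to the measure with density $\tau/\bar\tau$ (which is $\Pi_*\nu$), and once at the end to upgrade convergence under $\nu$ to strong convergence. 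So rather than leaving it as a caveat, just cite Zweim\"uller and the gap closes.
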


\subsection{Proofs of the general theorems}\label{pgr}
In this section we prove Theorems \ref{THM} and \ref{THMflow}.

To show that the family of point processes $(\N_\eps)_\eps$ on $\RR^+\times V=E$ converges (with respect to some probability measure $\mathbb P$) to a Poisson point process with a $\sigma$-finite intensity $\eta$, we can apply Kallenberg's criterion (Proposition 3.22 in \cite{resnick}).
It suffices to prove that for some system $\mathcal R$ stable by finite intersection and union of relatively compact open 
subsets, that generates the $\sigma$-algebra $\mathcal V\times \mathcal B(\RR^+)$, the following holds:

For any $R\in\R$,
\begin{itemize}
\item[(O)] $\eta(\partial R)=0$,
\item[(A)] $\EE(\N_\eps(R))\to \eta(R)$,
\item[(B)] $\PP(\N_\eps(R)=0)\to e^{-\eta(R)}$.
\end{itemize}
The last condition will be obtained from the simple next geometric approximation.
\begin{proposition}\label{pro:delta}
Let $\A$ be a collection of measurable subsets of $\Omega$.
Then for any $r\ge 1$, any positive integers $p_1,\ldots, p_r,q_1,\ldots,q_r$ such that
$p_i+q_i<p_{i+1}$ for any $i=1,...r-1$, and for any sets $A_1,\ldots, A_r\in\A$
we have
\[
\left|\mu(\forall i,\tau_{A_i}\circ T^{p_i}>q_i)-\prod_{i=1}^r(1-\mu(A_i))^{q_i}\right| 
\le  \sum_{i=1}^r q_i  \Delta(\A)\, ,
\]
with $\Delta$ defined in \eqref{defDelta}.
\end{proposition}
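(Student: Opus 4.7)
The plan is to unfold the event $\{\forall i,\ \tau_{A_i}\circ T^{p_i}>q_i\}$ into a plain intersection of ``simple'' events, and then peel the events off one at a time. Set $N=\sum_{i=1}^r q_i$ and list the pairs $(i,k)$ with $1\le i\le r$ and $1\le k\le q_i$ in increasing order of $p_i+k$. The spacing hypothesis $p_i+q_i<p_{i+1}$ guarantees that this produces a strictly increasing sequence of times $1\le n_1<n_2<\dots<n_N$, with associated indices $s_j\in\{1,\dots,r\}$, such that
\[
\{\forall i,\ \tau_{A_i}\circ T^{p_i}>q_i\} \;=\; \bigcap_{j=1}^{N} T^{-n_j}A_{s_j}^c.
\]
Since $\mu(T^{-n_j}A_{s_j}^c)=1-\mu(A_{s_j})$ by $T$-invariance, and $\prod_{j=1}^N(1-\mu(A_{s_j}))=\prod_{i=1}^r(1-\mu(A_i))^{q_i}$, the claim reduces to showing
\[
\Bigl|\mu\bigl(\textstyle\bigcap_{j=1}^N T^{-n_j}A_{s_j}^c\bigr) - \prod_{j=1}^N (1-\mu(A_{s_j}))\Bigr| \le N\,\Delta(\A).
\]

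I would prove this by induction on $N$. The base case $N=1$ is immediate, as both quantities equal $1-\mu(A_{s_1})$. For the induction step, introduce the ``shifted tail''
\[
B':=\bigcap_{j=2}^N T^{-(n_j-n_1)}A_{s_j}^c.
\]
All exponents $n_j-n_1$ are positive integers, so $B'\in\sigma\bigl(\bigcup_{n\ge 1}T^{-n}\A\bigr)$, which is precisely the $\sigma$-algebra appearing in the definition of $\Delta(\A)$. The $T$-invariance of $\mu$ gives
\[
\mu\bigl(\textstyle\bigcap_{j=1}^N T^{-n_j}A_{s_j}^c\bigr) = \mu\bigl(T^{-n_1}(A_{s_1}^c\cap B')\bigr) = \mu(A_{s_1}^c\cap B'), \qquad \mu(B')=\mu\bigl(\textstyle\bigcap_{j=2}^N T^{-n_j}A_{s_j}^c\bigr).
\]
Applying the definition of $\Delta(\A)$ with $A=A_{s_1}$ and $B=B'$, and writing $A^c\cap B=B\setminus(A\cap B)$, yields $|\mu(A_{s_1}^c\cap B')-(1-\mu(A_{s_1}))\mu(B')|\le \Delta(\A)$. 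Combining with the induction hypothesis $|\mu(B')-\prod_{j=2}^N(1-\mu(A_{s_j}))|\le(N-1)\Delta(\A)$ via the triangle inequality (and using $1-\mu(A_{s_1})\le 1$) gives the desired bound $N\,\Delta(\A)$.

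The whole argument is a routine ``one-swap-at-a-time'' replacement of correlated events by independent ones, so no serious obstacle is expected. The only point requiring vigilance is the bookkeeping of the indices and the verification that the residual block $B'$ lies in $\sigma(\bigcup_{n\ge 1}T^{-n}\A)$; the strict positivity of the shifts $n_j-n_1$, which is a consequence of the spacing hypothesis $p_i+q_i<p_{i+1}$, is the key structural input that makes the one-step decorrelation quantity $\Delta(\A)$ applicable at each peeling.
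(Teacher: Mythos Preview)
Your proof is correct and follows essentially the same ``peel one event at a time'' strategy as the paper. The only cosmetic difference is that you flatten the double index $(i,k)$ into a single strictly increasing sequence $(n_j)_{j\le N}$ and induct on $N$, whereas the paper first inducts on $q_1$ inside a fixed block and then inducts on the number $r$ of blocks; the underlying decorrelation step and the resulting error $N\Delta(\A)=\sum_i q_i\Delta(\A)$ are identical.
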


\begin{proof}
We will write $a = b\pm c$ to say that $|a-b|\le c$.
Note that for any integer $q_1\ge1$ we have 
\[
\{ \tau_{A_1}>q_1 \} = T^{-1} \{ \tau_{A_1}>q_1-1 \} - T^{-1}(A_1\cap \{ \tau_{A_1}>q_1-1 \}).
\]
Using the $T$-invariance of $\mu$, this gives
\[
\begin{split}
&\mu(\forall i,\ \tau_{A_i}\circ T^{p_i}>q_i)\\
&=\mu(\forall i,\ \tau_{A_i}\circ T^{p_i-p_1}>q_i)\\
&=\mu(\tau_{A_1}>q_1-1 ; \forall i\ge2,\tau_{A_i}\circ T^{p_i-p_1-1}>q_i)\\
&\quad -\mu(A_1\cap \{\tau_{A_1}>q_1-1 ; \forall i\ge2,\tau_{A_i}\circ T^{p_i-p_1-1}>q_i\}) \\
&=(1-\mu(A_1)) \mu(\tau_{A_1}>q_1-1 ; \forall i\ge2,\tau_{A_i}\circ T^{p_i-p_1+1}>q_i)\pm \Delta(\A).
\end{split}
\]
By an immediate induction on $q_1$ we obtain
\[
\mu(\forall i\ge 1,\tau_{A_i}\circ T^{p_i}>q_i) = (1-\mu(A_1))^{q_1} \mu(\forall i\ge2,\tau_{A_i}\circ T^{p_i-q_1}>q_i)\pm q_1\Delta(\A).
\]
The conclusion follows by an induction on the number $r$ of sets.
\end{proof}

\begin{proof}[Proof of Theorem \ref{THM}]
The strong convergence in distribution is by Theorem 1 in \cite{Zweimuller:2007} a direct consequence of the classical convergence in distribution with respect to $\mu$.
The later will be proven as announced using Kallenberg's criterion
with $\eta:=\lambda\times m$.

Let $\mathcal{R}$ be the collection of finite unions of open rectangles $I\times F$, where $I$ is an open bounded interval in $[0,\infty)$ and $F\in\mathcal W$.

Let $R\in\mathcal{R}$. We rearrange the subdivision given by the endpoints of the intervals defining $R$ to write $R=R' \cup  \bigcup_{i=1}^r (t_i,s_i)\times F_i $, where $t_i< s_i$,  $F_i\in\mathcal{W}$ for $i=1,...,r$,  $s_i\le t_{i+1}$ for every $i=1,...,r-1$ and $R'$ is contained in a finite number of vertical strips $\{t\}\times V$.
We assume without loss of generality that $R'=\emptyset$, as it will be clear from the sequel that the same arguments would give $\mathbb E_\mu[\mathcal N_\varepsilon(R')]\to0$ and $\mu(\N_\eps(R')=0)\to1$.

Note that for all $i=1,\ldots ,r$, $m_\eps(F_i)\to m(F_i)$ as $\eps\to 0$ by  Hypothesis (ii) and the Portemanteau theorem.
 
Condition (A) follows from the definition and the linearity of the expectation.
Indeed,
$$\mathbb E_\mu[\mathcal N_\varepsilon(R)]=\sum_{i=1}^r \sum_{n=\lfloor\frac {t_i}{\mu(A_\varepsilon)}\rfloor+1}^{\lceil\frac {s_i}{\mu(A_\varepsilon)}\rceil-1}\mu(H_\varepsilon^{-1}(F_i))\sim\sum_i(s_i-t_i)m_\eps(F_i)\sim (\lambda\times m)(R).$$

For Condition (B), set $p_i=\lfloor t_i/\mu(A_\eps)\rfloor$ and $q_i=\lceil s_i/\mu(A_\eps)\rceil-1-p_i$ and $A_{\eps,i}=H_\eps^{-1}F_i$.
Observe that $\N_\eps(R)=0$ is equivalent to 
$\forall i, \tau_{A_{\eps,i}}\circ T^{p_i}>q_i$. By Proposition~\ref{pro:delta} and due to assumption (i),
we have
\begin{eqnarray*}
\mu(\N_\eps(R)=0) &=& \prod_i (1-\mu(A_{\eps,i}))^{q_i} \pm \sum_{i=1}^r q_i \Delta(\{A_{\varepsilon,i}\})\\
&=& \prod_i (1-\mu(A_\varepsilon)m_\eps(F_i))^{q_i} + o(1)\\
&=&\exp\left(\sum_i  q_i \mu(A_\varepsilon) m_\eps(F_i)\right) +o(1)\\
&=&\exp\left((\lambda\times m)(R) \right)+o(1).
\end{eqnarray*}
\end{proof}

\begin{proof}[Proof of Theorem \ref{THMflow}]
Due to Theorem 1 of \cite{Zweimuller:2007}, the assumption holds also with $\tilde\mu$ s.t.  $\dfrac{d\tilde\mu}{d\mu}=\dfrac{\tau}{\tilde\tau}$, and it suffices to prove the convergence in distribution of 
$(\mathcal N_\varepsilon)_\varepsilon$ with respect to $\nu$.

By hypotheses, 
$(\mathcal N_\varepsilon)_\varepsilon$
converges in distribution wrt $\Pi_*\nu$ (which coincide with
the probability measure 
on $\Omega$ with density $\tau/\bar\tau$ wrt $\mu$) to a
Poisson point process $\mathcal P$ of intensity $\lambda\times m$,
i.e. $(\mathcal N_\varepsilon\circ\Pi)_\varepsilon$ converges in distribution, with respect to $\nu$, to $\mathcal P$. 

Note that for $y=(x,s)\in \M$
\[
\mathfrak N_\varepsilon(y)= (\psi_{\varepsilon,y})_*
   (\mathcal N_\varepsilon(\Pi(y)))\ 
\]
with
\[
\psi_{\varepsilon,y}(t,z)=
((S_{\lfloor t/\mu(A_\varepsilon)\rfloor}\tau(\Pi y)-s)\mu(A_\varepsilon)/\bar\tau,z)
\]
We conclude using the facts
 that $\mu(A_\varepsilon)\to 0$ and that
$S_n\tau/(n\bar\tau)\to 1$ $\nu$-almost surely.
\end{proof}
\section{A common framework suitable for systems modeled by Gibbs-Markov-Young towers}\label{common}

\subsection{General result}
The authors have studied in \cite{ps15} the case of a temporal Poisson 
point process, corresponding to $V=\{0\}$ and $H_\varepsilon\equiv 0$ (Theorem 3.5 therein appears as Theorem \ref{THM} of the present paper in this specific context).
In the above mentioned article, the general context was the case of dynamical systems modeled by a Gibbs Markov Young tower as studied in~\cite{aa}.
\begin{hypothesis}\label{HHH}
Let $\alpha,\beta>0$.
Let $\Omega$ be a metric space endowed with a Borel
probability measure $\mu$ and a $\mu$-preserving transformation
$T$. Assume that there exists a sequence of finite partitions $(\mathcal Q_k)_{k\ge 0}$ of an extension $(\tilde\Omega,\tilde\mu,\tilde T)$
of $(\Omega,\mu,T)$ by $\tilde\Pi:\tilde\Omega\rightarrow\Omega$ such that one of the two following assumptions holds:
\begin{itemize}
\item[(I)] either $\sup_{Q\in\mathcal Q_k}\diam (\tilde\Pi(Q))\le C k^{-\alpha}$,
\item[(II)] or $\sup_{Q\in\mathcal Q_{2k}}\diam(\tilde\Pi \tilde T^kQ)\le C k^{-\alpha}$.
\end{itemize}
Assume moreover that, there exists $C'>0$ such that, for every $k,n$
with $n\ge 2k$, for any $A\in \sigma(\mathcal Q_k)$
and for any $B\in\sigma\left(\bigcup_{m\ge 0}\mathcal Q_m\right)$,
$$\left|Cov_{\tilde\mu}\left(\mathbf 1_A,\mathbf 1_{B}\circ \tilde T^n\right)\right|\le C' n^{-\beta}\tilde\mu(A).$$
\end{hypothesis}
Case (I) is appropriate for non-invertible systems, whereas Case (II)
is appropriate for invertible systems.
Wide classes of examples of dynamical systems satisfying these
assumptions can be found in \cite{aa} (see also
 \cite{young,young99}).

\begin{proposition}\label{appli1}
Let $(\Omega,\mathcal F,\mu,T)$ satisfying Hypothesis \ref{HHH}.
With the notations of the beginning of Section \ref{sec:gene},
assume that there exists $p_\eps=o(\mu(A_\eps)^{-1})$ such that
\begin{enumerate}
\item\label{a}
 $  \mu(\tau_{A_\eps}\le p_\varepsilon | A_\eps)=o(1)$,
\item \label{ii}
$\mu((\partial A_\varepsilon)^{\lfloor C4^\alpha p_\varepsilon^{-\alpha}\rfloor})=o(\mu(A_\varepsilon))$ 
\item \label{iii} $\mu(H_\varepsilon^{-1}(\cdot)|A_\varepsilon)$ converges vaguely to
some measure $m$,
\item\label{F} 
for all $F\in\mathcal W$, $m(\partial F)=0$ and $\mu((\partial (H_\varepsilon^{-1} F))^{\lfloor C4^\alpha p_\varepsilon^{-\alpha}\rfloor}|A_\varepsilon)=o(1)$.
\end{enumerate}
Then the assumptions of Theorem~\ref{THM} are satisfied.
\end{proposition}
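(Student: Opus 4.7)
My plan is to verify the two conditions of Theorem~\ref{THM}. Condition~(ii) of that theorem is immediate: the vague convergence $m_\eps \to m$ from assumption~(\ref{iii}), combined with $m(\partial F)=0$ from~(\ref{F}), gives $\mu(H_\eps^{-1}F\,|\,A_\eps)\to m(F)$ via the Portmanteau theorem. All the work lies in condition~(i), namely $\Delta(H_\eps^{-1}\mathcal W_0) = o(\mu(A_\eps))$ for any finite $\mathcal W_0 \subset \mathcal W$.

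Fix such $\mathcal W_0$ and set $\mathcal A_\eps := H_\eps^{-1}\mathcal W_0$; every $A \in \mathcal A_\eps$ is a finite intersection of sets $H_\eps^{-1}F_j$ with $F_j \in \mathcal W_0$, hence $A \subset A_\eps$. First I would insert a time gap of length $p_\eps$: with $E := \{\tau_{A_\eps} \le p_\eps\}$, assumption~(\ref{a}) gives $\mu(A \cap E) \le \mu(A_\eps \cap E) = o(\mu(A_\eps))$, and Markov's inequality gives $\mu(E) \le p_\eps \mu(A_\eps) = o(1)$. Since on $E^c$ every atomic event $\{T^n x \in A'\}$ with $A' \in \mathcal A_\eps$ and $1 \le n \le p_\eps$ is empty, for each $B \in \sigma(\bigcup_{n \ge 1}T^{-n}\mathcal A_\eps)$ one can find $B' \in \sigma(\bigcup_{n > p_\eps}T^{-n}\mathcal A_\eps)$ with $B \cap E^c = B' \cap E^c$. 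A short decomposition using the two bounds above reduces the task to estimating $|\mu(A \cap B') - \mu(A)\mu(B')|$. Writing $B' = T^{-(p_\eps+1)}C$ with $C \in \sigma(\bigcup_{n \ge 0}T^{-n}\mathcal A_\eps)$ and lifting to the extension through $\tilde\Pi$, the quantity to control becomes
\[
\bigl|\tilde\mu(\tilde A \cap \tilde T^{-(p_\eps+1)}\tilde C) - \tilde\mu(\tilde A)\tilde\mu(\tilde C)\bigr|
\]
with $\tilde A := \tilde\Pi^{-1}A$ and $\tilde C := \tilde\Pi^{-1}C$.

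The second step is a partition approximation. Choose $k := \lfloor p_\eps/4 \rfloor$, so that $p_\eps + 1 \ge 2k$ and $Ck^{-\alpha} \le C 4^\alpha p_\eps^{-\alpha}$. Under Case~(I), set $\tilde A_k := \bigcup\{Q \in \mathcal Q_k : Q \subset \tilde A\} \in \sigma(\mathcal Q_k)$; any $Q$ contributing to $\tilde A \setminus \tilde A_k$ has $\tilde\Pi Q$ of diameter at most $Ck^{-\alpha}$ and meeting both $A$ and its complement, hence contained in the $C 4^\alpha p_\eps^{-\alpha}$-neighborhood of $\partial A \subset \bigcup_j \partial(H_\eps^{-1}F_j)$. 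A finite union of the bounds in~(\ref{ii}) and~(\ref{F}) then yields $\tilde\mu(\tilde A \Delta \tilde A_k) = o(\mu(A_\eps))$. The decorrelation of Hypothesis~\ref{HHH}, applied to $\tilde A_k \in \sigma(\mathcal Q_k)$ with time gap $n = p_\eps + 1 \ge 2k$, bounds the lifted correlation by $C' n^{-\beta}\mu(A_\eps) = o(\mu(A_\eps))$; combined with the approximation error this closes condition~(i). In Case~(II) the same argument applies after using $\tilde T$-invariance to shift the correlation to $\tilde T^k \tilde A$ and approximate at level $\mathcal Q_{2k}$, since then the available diameter bound is on $\tilde\Pi \tilde T^k Q$ and decorrelation requires $n \ge 4k$, still matched by $n \approx p_\eps$.

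The delicate point is the boundary bookkeeping in the second step: the $Ck^{-\alpha}$-neighborhood of $\partial A$ may extend outside $A_\eps$, and it is precisely this that forces one to combine the relative-boundary bound~(\ref{F}) inside $A_\eps$ with the unconditional bound~(\ref{ii}) on the neighborhood of $\partial A_\eps$. Once this topological check is made, all remaining estimates are routine.
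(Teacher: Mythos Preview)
Your approach mirrors the paper's: insert a time gap of length $p_\eps$ using assumption~(\ref{a}), approximate $A$ by atoms of the partition $\mathcal Q_k$ (or $\mathcal Q_{2k}$), and invoke the decorrelation of Hypothesis~\ref{HHH}. The verification of condition~(ii) of Theorem~\ref{THM}, the time-gap reduction, and the boundary bookkeeping combining assumptions~(\ref{ii}) and~(\ref{F}) are all correct and agree with the paper.

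There is, however, a real gap in the second step. The decorrelation in Hypothesis~\ref{HHH} requires the future observable to lie in $\sigma\bigl(\bigcup_{m\ge0}\mathcal Q_m\bigr)$, and you apply it directly with future event $\tilde C=\tilde\Pi^{-1}C$. Nothing in Hypothesis~\ref{HHH} guarantees $\tilde\Pi^{-1}\mathcal F\subset\sigma\bigl(\bigcup_m\mathcal Q_m\bigr)$. In Case~(I) this inclusion does follow from the diameter bound on $\tilde\Pi Q$, but in Case~(II) the control is only on $\diam(\tilde\Pi\,\tilde T^kQ)$, and your shift by $\tilde T^{-k}$ merely places the \emph{past} set $\tilde T^{-k}\tilde A$ into $\sigma(\mathcal Q_{2k})$ --- it does nothing for $\tilde C$. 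The paper closes this by also approximating $\tilde T^{-M}\tilde\Pi^{-1}B_1$ by a union of partition atoms, incurring an error of order $(N_{\eps,t}-p_\eps)\cdot o(\mu(A_\eps))$; paired with the factor $\tilde\mu(\tilde A)=O(\mu(A_\eps))$ in the covariance bound, this still yields $o(\mu(A_\eps))$. This extra approximation is precisely why the paper restricts the future event $B$ to times up to $N_{\eps,t}=\lfloor t/\mu(A_\eps)\rfloor$, which is all that the proof of Theorem~\ref{THM} via Proposition~\ref{pro:delta} actually uses.
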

We postpone the proof to the appendix.
When assumption  \eqref{iii} is missing, one has to change \eqref{F} accordingly, going through a subsequence $\mathcal E=(\eps_k)$, a family $\mathcal W_{\mathcal E} $ and a limit point $m$ to get that the assumptions of Theorem~\ref{THM1} are satisfied. 

\begin{remark}\label{first}
\begin{itemize}
\item
Notice that the Assumption \eqref{a} is always satisfied when the normalized first return time $\mu(A_\eps)\tau_{A_\eps}$ to $A_\eps$ is asymptotically exponentially distributed with parameter one, in particular when the temporal process (corresponding to $V=\{0\}$) converges in distribution to a Poisson process of intensity $\lambda$. 
\item
It may happen that one only has a nonuniform control of the diameters instead of Hypothesis \ref{HHH} (I) or (II).
Indeed it is possible to avoid these hypotheses. It suffices to replace (ii) with  
$$
\mu(\partial A_\eps^{[k_\eps]}) = o(\mu(A_\eps)),
$$
where 
in the non invertible case (I) $\partial A_\eps^{[k_\eps]}=\cup\tilde\Pi(Q)$, the union begin on those $Q\in \mathcal{Q}_{k_\eps}$ such that $\diam \tilde{\Pi}(Q)>Ck_\eps^{-\alpha}$ and $\partial A_\eps\cap\tilde{\Pi}(Q)\neq\emptyset$, $k_\eps =\lfloor p_\eps/2\rfloor$;
in the invertible case (II)  $\partial A_\eps^{[k_\eps]}=\cup\tilde\Pi(T^{k_\eps}Q)$, the union begin on those $Q\in \mathcal{Q}_{2k_\eps}$ such that $\diam \tilde{\Pi}(T^{k_\eps}Q)>Ck_\eps^{-\alpha}$ and $\partial A_\eps\cap\tilde{\Pi}(T^{k_\eps} Q)\neq\emptyset$, $k_\eps=\lfloor p_\eps/4\rfloor $; 
The modification of the proof of Proposition~\ref{appli1} is immediate.
\end{itemize}
\end{remark}

The following result is helpful to get assumption~\eqref{F} in many cases. 
\begin{proposition}\label{partition}
Assume that $V$ is an open subset of $\RR^d$, for some $d>0$,
that $H_\eps$ are $\mathfrak h$-H\"older continuous maps with respective H\"older constant $C_{\mathfrak h}(H_\eps)$, that there exists $\eta_\eps\rightarrow 0+$ such that $C_{\mathfrak h}(H_\eps)\eta_\eps^{\mathfrak h}\to0$ and that $m_\eps\to m$ (where $m$ is a finite measure on $V$). 
Then there exists a family $\mathcal W$ of relatively compact open subsets of $B$, stable by finite unions and intersections, generating the Borel $\sigma$-algebra of $V$, such that
\begin{enumerate}
\item
$m(\partial F)=0$ for any $F\in \mathcal W$,
\item for any $F\in \mathcal W$,
$$ 
\mu((\partial H_\eps^{-1}F)^{[\eta_\eps]} |A_\eps) = o(1).
$$
\end{enumerate}

\end{proposition}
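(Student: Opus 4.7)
The plan is to construct $\mathcal W$ explicitly as the Boolean algebra generated by a countable collection of well-chosen open boxes, and then to reduce the required boundary estimate to a consequence of the vague convergence $m_\eps\to m$. The H\"older condition on $H_\eps$ will bridge between the boundary of a preimage set in $A_\eps$ and the boundary of a subset of $V$.

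For the construction, I would set, for each coordinate $i\in\{1,\ldots,d\}$,
\[
G_i:=\{c\in\RR\,:\, m(\{x\in V\,:\, x_i=c\})=0\}.
\]
Since $m$ is a finite measure, the hyperplanes $\{x_i=c\}$ with positive $m$-mass form an at most countable family, so each $G_i$ is dense in $\RR$. Let $\mathcal B$ denote the collection of all open boxes $\prod_{i=1}^d(a_i,b_i)$ whose closure lies in $V$ and whose endpoints $a_i,b_i$ belong to $G_i$. This family is stable under intersection (since max and min preserve $G_i$), forms a basis for the topology of $V$, and each $R\in\mathcal B$ has $\partial R$ contained in finitely many hyperplanes $\{x_i=c\}$ with $c\in G_i$, so that $m(\partial R)=0$. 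I then take $\mathcal W$ to be the collection of finite unions of elements of $\mathcal B$: it is closed under finite unions trivially and under finite intersections by distributivity, it generates the Borel $\sigma$-algebra $\mathcal V$, and $m(\partial F)=0$ for every $F\in\mathcal W$, which is assertion~(i).

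For assertion~(ii), I would use that $H_\eps$ being continuous gives $\partial(H_\eps^{-1}F)\subset H_\eps^{-1}(\partial F)$, and then the $\mathfrak h$--H\"older regularity yields
\[
(\partial H_\eps^{-1}F)^{[\eta_\eps]}\cap A_\eps\subset H_\eps^{-1}\bigl((\partial F)^{[\delta_\eps]}\bigr),\qquad\delta_\eps:=C_{\mathfrak h}(H_\eps)\eta_\eps^{\mathfrak h},
\]
so that, writing $m_\eps=\mu(H_\eps^{-1}(\cdot)\mid A_\eps)$, the assertion reduces to showing $m_\eps((\partial F)^{[\delta_\eps]})=o(1)$, using that $\delta_\eps\to 0$ by hypothesis.

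The main obstacle is this final estimate, because the set $(\partial F)^{[\delta_\eps]}$ itself depends on $\eps$ and Portmanteau-type results do not apply directly. I would handle it by a sandwich argument: fix $\gamma>0$ and, since $\overline F$ is compact in $V$, choose a continuous compactly supported function $\varphi_\gamma\colon V\to[0,1]$ with $\mathbf 1_{(\partial F)^{[\gamma/2]}}\le\varphi_\gamma\le \mathbf 1_{(\partial F)^{[\gamma]}}$. For $\eps$ small enough $\delta_\eps<\gamma/2$, so $m_\eps((\partial F)^{[\delta_\eps]})\le\int\varphi_\gamma\,dm_\eps$; vague convergence of $m_\eps$ to $m$ then gives $\int\varphi_\gamma\,dm_\eps\to\int\varphi_\gamma\,dm\le m((\partial F)^{[\gamma]})$, hence $\limsup_\eps m_\eps((\partial F)^{[\delta_\eps]})\le m((\partial F)^{[\gamma]})$. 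Letting finally $\gamma\to 0$, continuity from above of the finite measure $m$ together with $m(\partial F)=0$ yields $m_\eps((\partial F)^{[\delta_\eps]})\to 0$, completing the argument.
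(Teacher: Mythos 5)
Your proof is correct and follows essentially the same approach as the paper: the construction of $\mathcal W$ via coordinates $c$ with $m$-null hyperplane $\{x_i=c\}$ is identical, and part~(ii) uses the same H\"older inclusion followed by a two-scale limiting argument in $\eps$ and then in the auxiliary radius. Your use of a sandwiching continuous test function to invoke vague convergence is a slightly more explicit rendering of the Portmanteau step that the paper leaves implicit, but it is the same underlying idea.
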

\begin{proof}
(i) 
Let $\pi_j:\mathbb R^d\rightarrow\mathbb R$ be the $j$-th canonical projection (i.e. $\pi_j((x_i)_i)=x_j$).
The set $\mathcal G^{j}:=\{ a\in\RR\colon (\pi_j)_*m({a})=0\}$ is dense in $\RR$, 
since its complement is at most countable.

We then define $\mathcal W$ as the collection
of open rectangles $\prod_{j=1}^d(a_j;b_j)\subset B$, with $a_j,b_j\in\mathcal G^{j}$.
By construction $m(\partial F)=0$ for any $F\in \mathcal W$.
The density of the $\mathcal G^j$'s implies that $\mathcal W$ generates the Borel $\sigma$-algebra.

(ii)
For any $F\in\mathcal W$ we have the inclusion
\[
(\partial H_\eps^{-1}F)^{[\eta_\eps]} \subset H_\eps^{-1} \partial F^{[C_{\mathfrak h}(H_\eps)\eta_\eps^{\mathfrak h}]}.
\]
Hence, for every $\eps_0>0$,
\[
\mu((\partial H_\eps^{-1}F)^{[\eta_\eps]} |A_\eps) \le m_\eps(\partial F^{[C_{\mathfrak h}(H_\eps)\eta_\eps^{\mathfrak h}]})\le m_\eps(\partial F^{[M_{\eps_0}]}),
\]
for any $0<\eps<\eps_0$, with $M_{\eps_0}:=\sup_{\eps\in(0,\eps_0)}C_{\mathfrak h}(H_\eps)\eta_\eps^{\mathfrak h}$. Therefore
\[
\limsup_{\eps\to0} \mu((\partial H_\eps^{-1}F)^{[\eta_\eps]} |A_\eps) \le m(\partial F^{[M_{\eps_0}]}),
\]
which goes to $0$ as $\eps_0\to0$ since $m(\partial F)=0$. 
\end{proof}

\subsection{Successive visits in a small neighbourhood of a generic point}\label{sec:voisin}
The purpose of the next result is to give examples for which 
Theorem \ref{THM} applies for returns in small balls, in the same context as in \cite{ps15}. 

\begin{theorem}\label{ThmgeneAppli1}
Assume that 
$\Omega$ is a $d$-Riemmannian manifold and that $\dim_H\mu=\liminf_{\varepsilon\rightarrow 0}
\frac{\log \mu(B(x_0,\varepsilon))}{\log\varepsilon}$ for $\mu$-almost every $x_0\in\Omega$.
Assume Hypothesis \ref{HHH} with $\alpha>\dim_H\mu$.
 
Then for $\mu$-almost every $x_0\in\Omega$ such that
\begin{equation}\label{neglec}
\exists \delta\in (1,\alpha\dim_H\mu),\quad \mu(B(x_0,\varepsilon)\setminus B(x_0,\varepsilon-\varepsilon^\delta)=o(\mu(B(x_0,\varepsilon))),
\end{equation}
the family of point processes 
$$
\mathcal N_\varepsilon(x) =\sum_{n\colon T^n(x)\in B(x_0,\eps)} \delta_{\left(n\mu(B(x_0,\eps),\eps^{-1}\exp_{x_0}^{-1}(T^nx)\right)}
$$
is strongly approximated in distribution
 by a Poisson process $\mathcal P_\eps$ of intensity $\lambda\times m_\eps^{x_0}$ where 
 $ m_\eps^{x_0}=\mu(H_\eps^{-1}(\cdot)|B(x_0,\varepsilon))$. 
\end{theorem}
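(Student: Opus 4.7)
The plan is to apply Proposition \ref{appli1} in its non-converging intensity variant (announced immediately after its statement), with $A_\eps:=B(x_0,\eps)$ and $H_\eps(y):=\eps^{-1}\exp_{x_0}^{-1}(y)$; the conclusion will then take the approximation form of Theorem \ref{THM1}. First I would fix $x_0$ in the full-measure set where both the dimension identity $\dim_H\mu=\liminf_{\eps\to 0}\log\mu(B(x_0,\eps))/\log\eps$ and the annulus condition \eqref{neglec} hold, select a witness $\delta\in(1,\alpha\dim_H\mu)$, choose an exponent $\gamma$ with $\delta/\alpha<\gamma<\dim_H\mu$ (possible since $\delta<\alpha\dim_H\mu$), and set $p_\eps:=\lfloor\eps^{-\gamma}\rfloor$. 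The dimension identity immediately yields $p_\eps\mu(B(x_0,\eps))\to 0$, so $p_\eps=o(\mu(A_\eps)^{-1})$ as required.

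Condition \eqref{ii} follows directly from \eqref{neglec}: the $r$-tubular neighborhood of the sphere $\partial B(x_0,\eps)$ lies inside the shell $B(x_0,\eps+r)\setminus B(x_0,\eps-r)$, and the relevant length scale $r\asymp p_\eps^{-\alpha}\asymp\eps^{\alpha\gamma}$ is $o(\eps^\delta)$ thanks to $\alpha\gamma>\delta$; applying \eqref{neglec} at radius $\eps$ and at $\eps+r$ bounds the shell by $o(\mu(B(x_0,\eps)))$. For condition \eqref{F}, the map $H_\eps$ restricted to $B(x_0,\eps)$ is Lipschitz with constant at most $c\eps^{-1}$ uniformly in $\eps$, since $\exp_{x_0}^{-1}$ is a local diffeomorphism; Proposition \ref{partition} then applies with $\mathfrak h=1$ and $\eta_\eps\asymp p_\eps^{-\alpha}\asymp\eps^{\alpha\gamma}$, its decay condition $C_{\mathfrak h}(H_\eps)\eta_\eps\to 0$ reducing to $\eps^{\alpha\gamma-1}\to 0$, which holds because $\alpha\gamma>\delta>1$. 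This provides the generating algebra $\mathcal W$ along which \eqref{F} is verified.

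The hard part will be condition \eqref{a}, the no-close-return estimate $\mu(\tau_{B(x_0,\eps)}\le p_\eps\mid B(x_0,\eps))=o(1)$ for $\mu$-almost every $x_0$. This is precisely the technical core of the temporal result of \cite{ps15} (their Theorem 3.5 in the case $V=\{0\}$): it combines the decorrelation rate from Hypothesis \ref{HHH}, a Borel--Cantelli argument along a geometric subsequence of radii, and the annulus bound \eqref{neglec} to preclude too many short self-returns of the orbit at a generic base point $x_0$. I would import this estimate verbatim. With all four hypotheses of Proposition \ref{appli1} in hand, the family $(m_\eps^{x_0})_\eps$ of probability measures, all supported in the unit ball of $\RR^d$, is vaguely relatively compact, so every sequence $\eps_k\to 0$ admits a sub-subsequence along which $m_{\eps_k}^{x_0}$ converges vaguely to some $m$. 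Applying Theorem \ref{THM} along such sub-subsequences and invoking the diagonal argument from the proof of Theorem \ref{THM1} yields the announced strong approximation in distribution of $\mathcal N_\eps$ by the Poisson process of intensity $\lambda\times m_\eps^{x_0}$.
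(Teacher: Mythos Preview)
Your proposal is correct and follows essentially the same route as the paper: apply Proposition~\ref{appli1} with $A_\eps=B(x_0,\eps)$, $H_\eps=\eps^{-1}\exp_{x_0}^{-1}$, and $p_\eps=\eps^{-\sigma}$ for some $\sigma\in(\delta/\alpha,\dim_H\mu)$, deduce \eqref{ii} from \eqref{neglec}, obtain \eqref{F} via Proposition~\ref{partition}, and import the short-return estimate \eqref{a} from \cite{ps15}; the non-convergence of $m_\eps^{x_0}$ is handled by passing to subsequences as in Theorem~\ref{THM1}. The only cosmetic difference is that the paper phrases \eqref{a} through Remark~\ref{first} (the temporal Poisson limit of \cite{ps15} forces \eqref{a}), whereas you cite the underlying short-return bound directly.
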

\begin{proof}[Proof of Theorem~\ref{ThmgeneAppli1}]
We apply Proposition~\ref{appli1}, assuming without loss of generality that the measures $m_\eps^{x_0}$ are converging.

Fix $\sigma\in(\delta/\alpha,\dim_H\mu)$ and set $p_\eps=\eps^{-\sigma}$.
The assumptions imply that the temporal return times process converges in distribution to a Poisson process of parameter one (See \cite{ps15}). By Remark~\ref{first} this shows that Assumption \eqref{a} of Proposition~\ref{appli1} is true.  
Assumption~\eqref{ii} follows from \eqref{neglec}.
Finally, the family $\mathcal W$ comes from Proposition~\ref{partition} above,
which proves also the last assumption \eqref{F} and thus the theorem.
\end{proof}

\section{Applications to billiard maps and flows}\label{billiard}
\subsection{Bunimovich billiard}\label{sec:Bunimovich}
The Bunimovich billiard is an example of weakly hyperbolic system
(with polynomial decay of the covariance of H\"older
functions).

Let $\ell>0$. We consider the planar domain $Q$ union of the rectangle
$[-\ell/2;\ell/2]\times[-1,1]$ and of the two planar discs of radius 1
centered at $(\pm\ell/2,0)$.
We consider a point particle moving with unit speed in $Q$, going
straight on between two reflections off $\partial Q$ and reflecting with
respect to the classical Descartes law of reflection (incident angle=reflected angle).
The billiard system $(\Omega,\mu,T)$ describes the evolution
at reflected times
of a point particle moving in the domain $Q$ as described above.
We define the set $\Omega$ of reflected vectors as follows
$$ \Omega:=\{\partial Q \times S^1\ :\  <\vec n_q,\vec v>\ge 0\}\, ,$$
where $\vec n_q$ is the unit normal
vector normal to $\partial Q$, directed inside $Q$, at $q$.
Such a reflected vector $(q,\vec v)$ can be represented
by $x=(r,\varphi)\in\mathbb R/(2(\pi+\ell)\mathbb Z)$,
$r$ corresponding to the counterclockwise curvilinear abscissa of $q$ on $\partial Q$ (starting from a fixed point on $\partial Q$) and 
$\varphi$ measuring the angle between $\vec n_q$ and $\vec v$.
The transformation $T$ maps a reflected vector to the reflected vector
corresponding to the next reflection time.
This transformation preserves the measure $\mu$ given (in coordinates) by
${d\mu}(r,\varphi)=h(r,\varphi) \, dr\, d\varphi$ where $h(r,\varphi)=\frac{\cos\varphi}{2\, |\partial Q|}$.
We endow $\Omega$ with the supremum metric $d((r,\varphi),(r',\varphi'))=\max(|r-r'|,|\varphi-\varphi'|)$. 
\begin{theorem}\label{thmBunimovich}
For $\mu$-almost every $x_0=(r_0,\varphi_0)\in \Omega$,
the family of point processes
$$\sum_{n\ge 1\ :\ d(T^n(x),x_0)<\eps} \delta_{\left(n\varepsilon^2h(x_0),\frac{T^n(x)-x_0}\eps\right)} $$
converges in distribution (with $x$ distributed with respect to any probability measure absolutely
continuous with respect to the Lebesgue measure on $\Omega$) to 
a Poisson Point Process with intensity $\lambda\times\lambda_2$, where here
$\lambda_2$ is the 2-dimensional normalized Lebesgue measure on $(-1,1)^2$.
\end{theorem}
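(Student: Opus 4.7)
The plan is to deduce Theorem~\ref{thmBunimovich} directly from Theorem~\ref{ThmgeneAppli1}, applied in the invertible-tower case~(II) with $H_\varepsilon(y)=(y-x_0)/\varepsilon$. Three things have to be checked: Hypothesis~\ref{HHH}(II) with exponent $\alpha>\dim_H\mu=2$, the boundary condition \eqref{neglec}, and the identification of the vague limit $m$ of the conditional measures $m_\varepsilon^{x_0}=\mu(H_\varepsilon^{-1}(\cdot)|B(x_0,\varepsilon))$. A small bookkeeping step then reconciles the stated time normalization with the canonical one.

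Hypothesis~\ref{HHH}(II) for the Bunimovich stadium follows from the Markarian--Chernov--Zhang Young tower construction, which yields polynomial decay of correlations of order $1/n$ (hence $\beta\ge 1$) and partition refinements whose $k$-fold iterates have diameters bounded by $Ck^{-\alpha}$ for some $\alpha>2$. This is precisely the tower already used in \cite{ps15} to establish the corresponding temporal Poisson theorem. Since $\mu$ has continuous density $h(r,\varphi)=\cos\varphi/(2|\partial Q|)$ with respect to the two-dimensional Lebesgue measure, one has $\dim_H\mu=2$ at $\mu$-a.e.\ point, so the requirement $\alpha>\dim_H\mu$ is satisfied.

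For the boundary condition, at a Lebesgue point $x_0$ of $h$ one has $\mu(B(x_0,\varepsilon))\sim 4h(x_0)\varepsilon^2$ and $\mu(B(x_0,\varepsilon)\setminus B(x_0,\varepsilon-\varepsilon^\delta))=O(\varepsilon^{1+\delta})=o(\varepsilon^2)$ for any $\delta>1$, which lies inside $(1,\alpha\dim_H\mu)=(1,2\alpha)$. For the vague limit $m$, a change of variables gives, for any rectangle $F\subset(-1,1)^2$,
\[
m_\varepsilon^{x_0}(F)=\frac{\varepsilon^2\int_F h(x_0+\varepsilon z)\,dz}{\mu(B(x_0,\varepsilon))}\longrightarrow \tfrac{1}{4}\,\Leb(F)
\]
by continuity of $h$ at $x_0$. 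Theorem~\ref{ThmgeneAppli1} then produces strong convergence to a Poisson point process whose intensity, once the canonical time $n\mu(B(x_0,\varepsilon))\sim 4n\varepsilon^2h(x_0)$ is rescaled into the stated $n\varepsilon^2 h(x_0)$ (a time dilation by $4$ compensating the $1/4$ in the space limit), matches $\lambda\times\lambda_2$ exactly.

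The main technical obstacle is the first step, namely verifying Hypothesis~\ref{HHH}(II) with $\alpha>2$ for the stadium. This rests on the delicate Markarian--Chernov--Zhang tower, where the tangential reflections on the flat boundary components of $Q$ require careful polynomial control of both the tail of the return time to the base and the diameters of dynamical refinements. Once this is in hand, the remaining ingredients — the geometric estimate \eqref{neglec} at Lebesgue points of $h$ and the identification of $m$ by Lebesgue differentiation — are routine.
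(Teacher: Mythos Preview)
Your overall strategy is exactly the paper's: invoke Theorem~\ref{ThmgeneAppli1} in case~(II), check the annulus condition~\eqref{neglec} using the smooth density $h$, and identify the vague limit $m_\eps^{x_0}\to\lambda_2$ by Lebesgue differentiation. The bookkeeping with the factor of $4$ in the time normalization is also handled correctly.

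There is, however, a genuine gap in your verification of Hypothesis~\ref{HHH}(II). You assert that the Markarian--Chernov--Zhang tower for the stadium yields a diameter exponent $\alpha>2$, but this is not so: the paper records $\alpha=1$ (together with a second parameter $\zeta=1$ inherited from \cite{ps15}). The stadium is only \emph{polynomially} hyperbolic --- orbits sliding between the flat walls see no contraction at all --- and this prevents the partition refinements from shrinking at the rate you claim. Contrast the Sina\"{\i} case treated just afterwards, where the paper explicitly notes that Hypothesis~\ref{HHH}(II) holds for \emph{any} $\alpha>0$; no such statement is made for the stadium.

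This matters because you are trying to satisfy the literal hypothesis $\alpha>\dim_H\mu=2$ of Theorem~\ref{ThmgeneAppli1}. The paper's own proof instead appeals directly to \cite[Section~9]{ps15}, where the temporal Poisson law for the stadium is established under the weaker combination $\alpha=1$, $\zeta=1$, $\dim_H\mu=2$ (note that what the proof of Theorem~\ref{ThmgeneAppli1} actually uses is $\delta\in(1,\alpha\dim_H\mu)$, which with $\alpha=1$ still allows $\delta\in(1,2)$). In short, you have correctly identified ``verifying the tower hypotheses'' as the main technical obstacle, but then asserted it away with an incorrect value of $\alpha$; the paper resolves it by quoting the detailed stadium analysis carried out in \cite{ps15} rather than by meeting the stated inequality $\alpha>\dim_H\mu$.
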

\ \bigskip

\begin{center}
\includegraphics[scale=0.5]{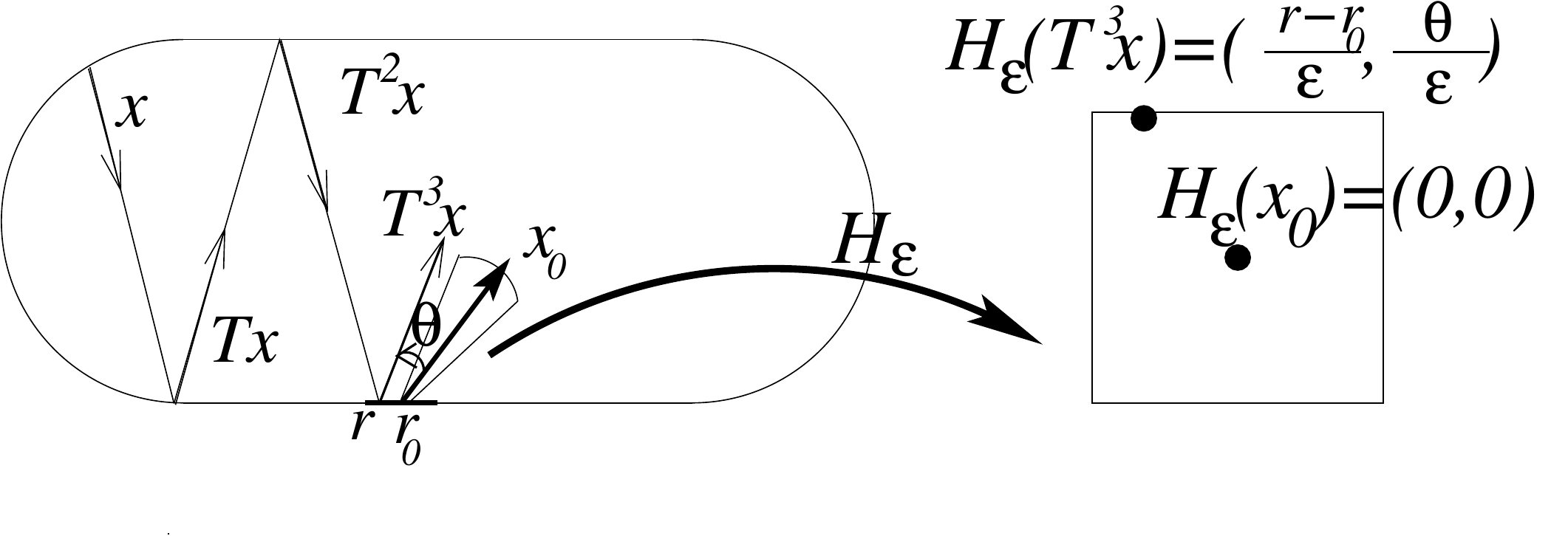}
\end{center}

\begin{proof}
The convergence comes directly from Theorem \ref{ThmgeneAppli1}, 
case (II) with $\alpha=1$, $\zeta=1$ and $\dim_H\mu=2$,
(due to \cite{ps15}, namely in Section 9 therein)
and from the fact that $\mu(H_\varepsilon^{-1}(\cdot) |A_\varepsilon)$
converges in distribution to the normalized Lebesgue measure 
on $B_{|\cdot|_\infty}(0,1)$. 
To identify the intensity we observe that $\mu(B(x_0,\varepsilon))\sim \cos\varphi_0\varepsilon^2/(2|\partial Q|)$ as $\varepsilon\rightarrow 0+$.
\end{proof}
We now consider the billiard flow $(\mathcal M,\nu,(Y_t)_t)$ with $$\mathcal M=\{(q,\vec v)\in Q \times S^1\ :\ q\in \partial Q\Rightarrow <\vec n_q,\vec v>\ge 0\},$$
where $Y_t(q,\vec v)=(q',\vec v')$ if a particle that was at time 0 at
position $q$ with speed $\vec v$ will be at time $t$ at position $q'$
with speed $\vec v'$ and where $\nu$ is the normalized Lebesgue measure on $\mathcal M$. 

An important well known fact is that the billiard flow system $(\mathcal M,\nu,(Y_t)_t)$ can be represented as the special flow over
the billiard map system $(\Omega,\mu,T)$ with roof function
$\tau:\Omega\rightarrow(0,+\infty)$ given by
$\tau(x):=\inf\{t>0\ :\ Y_t(x)\in\partial Q\times S^1\}$.
This enables us to get the following result.
\begin{corollary}\label{coroBunimovich}
For $\mu$-almost every $x_0=(r_0,\varphi_0)\in\Omega$, 
the family of point processes
$$ \sum_{t>0\ :\ Y_t(y)\in B_\Omega(x_0,\varepsilon)} \delta_{\left(\frac{t\varepsilon^2\cos\varphi_0}{2\pi Area(Q)},\frac{Y_t(y)-x_0}{\varepsilon}\right)} $$
converges in distribution (with respect to any probability measure absolutely
continuous with respect to the Lebesgue measure on $\mathcal M$) to 
a Poisson Point Process with intensity $\lambda\times\lambda_2$, where here
$\lambda_2$ is the 2-dimensional normalized Lebesgue measure on $[-1,1]^2$ and where $B_\Omega(x_0,\eps)$ means the ball in $\Omega$.
\end{corollary}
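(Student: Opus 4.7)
The plan is to obtain this corollary as an immediate application of Theorem~\ref{THMflow} to the special flow representation of the billiard, fed by the discrete-time result of Theorem~\ref{thmBunimovich}.

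First, I would set up the correspondence. As recalled just before the statement, the billiard flow $(\mathcal{M},\nu,(Y_t)_t)$ is a special flow over the billiard map $(\Omega,\mu,T)$ with roof function $\tau(x)=\inf\{t>0 : Y_t(x)\in\partial Q\times S^1\}$. I set $\mathcal{A}_\eps:=B_\Omega(x_0,\eps)\times\{0\}\subset\mathcal{M}$, so that $A_\eps=\Pi\mathcal{A}_\eps=B_\Omega(x_0,\eps)$, and let $H_\eps(x):=(x-x_0)/\eps$. With these choices, the event $\{t>0:Y_t(y)\in A_\eps\times\{0\}\}$ records precisely the flow times where $Y_t(y)$ lies in the $\eps$-ball $B_\Omega(x_0,\eps)$ inside the section $\Omega$, matching the process in the statement.

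Next, I would verify the hypothesis of Theorem~\ref{THMflow}: the discrete-time process $\mathcal{N}_\eps$ associated to the base system $(\Omega,\mu,T)$, with the same $A_\eps$ and $H_\eps$, must converge in distribution (with respect to some $\tilde\mu\ll\mu$) to a Poisson process of intensity $\lambda\times m$ on $[0,\infty)\times(-1,1)^2$. This is exactly Theorem~\ref{thmBunimovich}, for $\mu$-almost every $x_0=(r_0,\varphi_0)$, with $m=\lambda_2$ the normalized Lebesgue measure on $(-1,1)^2$. Theorem~\ref{THMflow} then yields the strong convergence in distribution of the flow process $\mathfrak N_\eps$ to a Poisson point process with intensity $\lambda\times\lambda_2$.

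The remaining step is to reconcile the two time normalizations. Theorem~\ref{THMflow} delivers the space-time rescaling $(t\mu(A_\eps)/\bar\tau,H_\eps(\Pi Y_t(y)))$, whereas the statement uses $t\eps^2\cos\varphi_0/(2\pi\,\mathrm{Area}(Q))$. From the end of the proof of Theorem~\ref{thmBunimovich} we have $\mu(B(x_0,\eps))\sim\cos\varphi_0\,\eps^2/(2|\partial Q|)$, and Santal\'o's formula for planar billiards gives $\bar\tau=\int_\Omega\tau\,d\mu=\pi\,\mathrm{Area}(Q)/|\partial Q|$. Dividing,
\[
\frac{\mu(A_\eps)}{\bar\tau}\;\sim\;\frac{\cos\varphi_0\,\eps^2}{2\pi\,\mathrm{Area}(Q)},
\]
which is the desired normalization. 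A change of time variable by this asymptotic ratio (absorbed into the limit as $\eps\to0$ by the vague convergence / Portmanteau theorem, since the Poisson intensity $\lambda\times\lambda_2$ is invariant under a global rescaling of the base measure matched to the time rescaling) produces the form given in Corollary~\ref{coroBunimovich}. Finally, the reference measure $\nu$ on $\mathcal{M}$ coincides with the normalized Lebesgue measure on $\mathcal{M}$, so the "strong" conclusion of Theorem~\ref{THMflow} directly covers sampling $y$ from any probability measure absolutely continuous with respect to Lebesgue on $\mathcal{M}$.

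The only non-cosmetic ingredient is thus the computation of $\bar\tau$ via Santal\'o's formula; everything else is a mechanical transfer via Theorem~\ref{THMflow}.
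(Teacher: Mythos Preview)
Your proposal is correct and follows essentially the same route as the paper: apply Theorem~\ref{THMflow} to the special-flow representation, feeding in the discrete-time convergence from Theorem~\ref{thmBunimovich}, and then identify the time normalization via $\mu(A_\eps)\sim\eps^2\cos\varphi_0/(2|\partial Q|)$ together with $\bar\tau=\pi\,\mathrm{Area}(Q)/|\partial Q|$. The paper obtains this last identity by the direct computation $2\pi\,\mathrm{Area}(Q)=\int_\Omega\tau\cos\varphi\,dr\,d\varphi=2|\partial Q|\,\bar\tau$, which is exactly the Santal\'o formula you invoke.
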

\begin{proof}
Set  $\bar\tau:=\int_\Omega\tau\, d\mu$.
Due to Theorems \ref{thmBunimovich} and \ref{THMflow}, the point process
$$\sum_{t>0\ :\ Y_t(y)\in B_\Omega(x_0,\varepsilon)} \delta_{\left(\frac{t\varepsilon^2\cos\varphi_0}{2\bar\tau|\partial Q|},\frac{Y_t(y)-x_0)}\eps  \right)} $$
converges in distribution to a Poisson Point Process with intensity
$\lambda\times\lambda_2$. Moreover $2\pi Area(Q)=\int_\Omega\tau\cos\varphi\, dr\, d\varphi=2|\partial Q|\bar\tau .$
\end{proof}
\subsection{Sinai billiards}
Let $I\in\mathbb N^*$ and $O_1,...,O_I$ be open convex subsets
of $\mathbb T^2$ with $C^3$-smooth boundary of positive curvature, and pairwise
disjoint closures. We then set $Q=\mathbb T^2\setminus\bigcup_{i=1}^IO_i$.
As for the Bunimovich billiard, we consider a point particle
moving in $Q$, with unit speed and elastic reflections off $\partial Q$.
This model is called the Sinai billiard.
We assume moreover that the horizon is finite, i.e. that the time
between two reflections is uniformly bounded.

For this choice of $Q$, we consider now the billiard map $(\Omega,\mu,T)$ and the billiard flow $(\mathcal M,\nu,(Y_t)_t)$ defined
as for the Bunimovich billiard in Subsection \ref{sec:Bunimovich}.

Under this finite horizon assumption, the Sinai billiard has
much stronger hyperbolic properties than the Bunimovich
billiard (with namely an exponential decay of the covariance of H\"older functions), but nevertheless, compared to Anosov
map, its study is complicated by the presence of discontinuities.

\begin{theorem}\label{Sinai1}
For $\mu$-almost every $x_0=(q_0,\vec v_0)\in \Omega$ (represented by $(r_0,\varphi_0)$),
\begin{itemize}
\item {\bf [Return times in a neighborhood of $x_0$]}
The conclusions of Theorem \ref{thmBunimovich} and of its Corollary
\ref{coroBunimovich} hold also true.
\item {\bf [Return times in a neighborhood of the position $q_0$ of $x_0$]}
The family of point processes
$$ \sum_{n\ge1\ :\ T^n(x)\in B_{\partial Q}(q_0,\varepsilon)\times S^1} \delta_{\left(\frac{2\varepsilon n}{|\partial Q|},\frac{r-r_0}{\eps},\varphi\right)}$$
(where $x$ is represented by $(r,\varphi)$)
converges in distribution (with respect to any probability measure absolutely
continuous with respect to the Lebesgue measure on $\Omega$) to 
a Poisson Point Process with intensity $\lambda\times m_0$, where here
$m_0$ is the probability measure on $[-1,1]\times[-\pi/2;\pi/2]$ 
with density $(r,\varphi)\mapsto \frac{\cos(\varphi)}{4}$.
\item The family of point processes
$$\sum_{t>0\ :\ Y_t(y)\in  B_Q(q_0,\varepsilon)\times S^1} 
\delta_{\left(\frac{2\varepsilon t}{\pi Area(Q)},\frac{r-r_0}\eps,\varphi\right)}$$
converges in distribution (with respect to any probability measure absolutely
continuous with respect to the Lebesgue measure on $\mathcal M$) to 
a Poisson Point Process with intensity $\lambda\times m_0$, with
$m_0$ as above.
\end{itemize}
\end{theorem}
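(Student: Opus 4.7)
The first bullet goes by the same route as the Bunimovich case. The finite-horizon Sinai billiard admits a Young tower with exponential decay of correlations (see \cite{young}), so Hypothesis~\ref{HHH} holds in the invertible case (II) with $\alpha$ as large as one wishes (in particular $\alpha>\dim_H\mu=2$) and $\beta$ arbitrarily large; moreover the spatial normalised measures $\mu(H_\eps^{-1}(\cdot)\mid B(x_0,\eps))$ converge to the uniform Lebesgue measure on $(-1,1)^2$ by smoothness of the density. Theorem~\ref{ThmgeneAppli1} then gives the Poisson limit for balls $B(x_0,\eps)$, and $\mu(B(x_0,\eps))\sim\cos\varphi_0\,\eps^2/(2|\partial Q|)$ identifies the intensity as $\lambda\times\lambda_2$. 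For the flow, Theorem~\ref{THMflow} combined with Santal\'o's identity $\bar\tau=\pi\,\mathrm{Area}(Q)/|\partial Q|$ produces the announced time normalisation, exactly as in the proof of Corollary~\ref{coroBunimovich}.

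For the second bullet the target sets are the tubes $A_\eps:=B_{\partial Q}(q_0,\eps)\times S^1$, with $V:=(-1,1)\times(-\pi/2,\pi/2)$ and $H_\eps(r,\varphi):=((r-r_0)/\eps,\varphi)$. A direct integration in the $(r,\varphi)$ chart yields $\mu(A_\eps)=2\eps/|\partial Q|$, so that the time rescaling $n\mu(A_\eps)$ is precisely $2\eps n/|\partial Q|$. For any rectangle $F=(a,b)\times(c,d)\subset V$,
\[
m_\eps(F)=\frac{|\partial Q|}{2\eps}\int_{r_0+a\eps}^{r_0+b\eps}\int_c^d \frac{\cos\varphi}{2|\partial Q|}\,d\varphi\,dr\;\xrightarrow[\eps\to 0]{}\;\frac{(b-a)(\sin d-\sin c)}{4},
\]
which is exactly the measure $m_0$ with density $\cos(\varphi)/4$. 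This gives assumption (iii) of Proposition~\ref{appli1}; assumption (F) follows from Proposition~\ref{partition} applied to the $H_\eps$, whose H\"older constant grows only polynomially in $\eps^{-1}$.

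To finish the verification I would choose $p_\eps:=\lfloor\eps^{-\sigma}\rfloor$ with some $\sigma\in(0,1)$, so that $p_\eps=o(\mu(A_\eps)^{-1})$, and the $\lfloor C4^\alpha p_\eps^{-\alpha}\rfloor$-neighbourhood of $\partial A_\eps=\{r_0\pm\eps\}\times S^1$ has $\mu$-measure $O(p_\eps^{-\alpha})=o(\eps)$ for $\alpha$ chosen large enough (allowed by the exponentially good tower), giving (ii). The main obstacle is the short-return condition (a), namely $\mu(\tau_{A_\eps}\le p_\eps\mid A_\eps)=o(1)$: unlike the ball case, $A_\eps$ is small only in the $r$-direction, so a priori any orbit revisiting the arc $B_{\partial Q}(q_0,\eps)$ contributes. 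I would split $\sum_{n=1}^{p_\eps}\mu(A_\eps\cap T^{-n}A_\eps)$ into a geometric range $n\le n_0$, where the finite horizon, uniform hyperbolicity along unstable cones and the genericity of $x_0$ (no periodic orbit of period $\le n_0$ based on the arc through $q_0$) give the bound $\sum_{n\le n_0}\mu(A_\eps\cap T^{-n}A_\eps)=o(\mu(A_\eps))$, and a decorrelation range $n_0<n\le p_\eps$, where exponential decay of correlations applied to H\"older approximants of $\mathbf 1_{A_\eps}$ yields $\mu(A_\eps\cap T^{-n}A_\eps)\le \mu(A_\eps)^2+Ce^{-cn}$, summable against $p_\eps$. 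This is the place where the $\mu$-generic choice of $x_0$ is actually used.

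The third bullet is then an immediate consequence of Theorem~\ref{THMflow} applied to the map convergence of the second bullet: the flow visits $B_Q(q_0,\eps)\times S^1$ at reflection times exactly when the billiard map orbit lies in $A_\eps$, the positional marker $H_\eps$ is transferred unchanged by the projection $\Pi$, and Santal\'o's identity $\bar\tau=\pi\,\mathrm{Area}(Q)/|\partial Q|$ converts $\mu(A_\eps)/\bar\tau$ to $2\eps/(\pi\,\mathrm{Area}(Q))$, producing the announced time normalisation and the same limiting intensity $\lambda\times m_0$.
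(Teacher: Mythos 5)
Your treatment of the first and third bullets, and of the easy assumptions of Proposition~\ref{appli1} for the second bullet (the computation $\mu(A_\eps)=2\eps/|\partial Q|$, the vague convergence $m_\eps\to m_0$, assumption (F) via Proposition~\ref{partition}, and (ii) via the measure of the $p_\eps^{-\alpha}$-neighbourhood of $\{r_0\pm\eps\}\times[-\pi/2,\pi/2]$), agree with the paper's route. The crux of the second bullet, however, is exactly where your argument is loose: the short-return condition (a). Your proposed decorrelation bound
$\mu(A_\eps\cap T^{-n}A_\eps)\le\mu(A_\eps)^2+Ce^{-cn}$, summed over $n_0<n\le p_\eps$, leaves a term $Ce^{-cn_0}$ which for fixed $n_0$ is a constant independent of $\eps$ and thus not $o(\mu(A_\eps))=o(\eps)$. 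To make this work you would need $n_0=n_0(\eps)$ growing like a multiple of $\log(1/\eps)$, and worse, the constant $C$ is not uniform: H\"older approximants of $\mathbf 1_{A_\eps}$ have H\"older norm blowing up as a power of $\eps^{-1}$, forcing $n_0(\eps)$ to be of order $\log(1/\eps)$ with a nontrivial prefactor. That pushes the difficulty entirely into the ``geometric range'' $n\le n_0(\eps)\sim\log(1/\eps)$, which you assert follows from ``finite horizon, uniform hyperbolicity along unstable cones and the genericity of $x_0$'' without an argument. Since $A_\eps$ is a full angular fibre over a short arc, returns at any reflection angle count, so this range is genuinely delicate and is where the $\mu$-a.e.\ choice of $x_0$ is spent. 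The paper does not reprove this from scratch but points to a slight adaptation of \cite[Lemma 6.4(iii)]{ps10}, which gives precisely $\mu(\tau_{A_\eps}\le\eps^{-\sigma}\mid A_\eps)=o(1)$ for a.e.\ $x_0$ and any $\sigma<1$. Your sketch identifies the right obstacle but does not supply (or cite) a valid proof of this step; as written the argument has a gap at the heart of the second bullet.
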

Due to \cite{young}, the Sinai billiard $(\Omega,\mu, T)$
satisfies Hypothesis \ref{HHH}-(II) with any $\alpha>0$ and any $\beta>0$.
\begin{proof}
The first item follows from Theorem~\ref{ThmgeneAppli1} as in Theorem~\ref{thmBunimovich} and Corollary~\ref{coroBunimovich}.

The third item follows from the second one as Corollary \ref{coroBunimovich} comes from Theorem \ref{thmBunimovich}.

Let us prove the second item. 
Let $x_0=(q_0,\varphi_0)\in\Omega$.
We set $A_\varepsilon(x_0):=[q_0-\varepsilon,q_0+\varepsilon]
\times [\pi/2,\pi/2]$ and $H_\varepsilon:(q,\varphi)\mapsto(\varepsilon^{-1}(q-q_0),\varphi)$. Note first that
$\mu(A_\varepsilon)=2\varepsilon/|\partial Q|$ and that $\mu(H_\varepsilon^{-1}(\cdot) |A_\varepsilon)$
converges vaguely to $m_0$.

We follow verbatim the proof of Theorem~\ref{ThmgeneAppli1} which  invokes Proposition \ref{appli1}, except for its assumption~\eqref{a}:

The convergence of the temporal process as in the proof of Theorem~\ref{ThmgeneAppli1}, associated to the position of the billiard particle, is not present in the literature in these terms. However, a slight adaptation of \cite[Lemma 6.4-(iii)]{ps10} gives that for any $\sigma<1$
and $\mu$-a.e. $x_0\in\partial Q\times]-\frac\pi 2;\frac\pi 2[$ we have 
$$  \mu(\tau_{A_\varepsilon}\le \varepsilon^{-\sigma}|A_\varepsilon)=o(1).$$
That is (i) of Proposition \ref{appli1} holds with $p_\eps=\eps^{-\sigma}$.
\end{proof}

Let us write $\Pi_Q:\mathcal M\rightarrow Q$ and $\Pi_V:\mathcal M\rightarrow S^1$ for the two canonical projections, which correspond
respectively to the position and to the speed.
Using results established in \cite{ps10}, we also state a result of convergence to a spatio-temporal Poisson point process for entrance
times in balls for the flows.
We endow $\mathcal M$ with the metric $d$ given by 
$d((q,\vec v),(q',\vec v'))=\max(d_0(q,q'),|\angle{(\vec v,\vec v')}|)$,
where $d_0$ is the euclidean metric in $Q$ and where $\angle(\cdot,\cdot)$ is the angular measure of the angle.
\begin{theorem}
For $\nu$-a.e. $y_0=(q_0,\vec v_0)\in\mathcal M$,
\begin{itemize}
\item the family of point processes
$$\sum_{t\ :\ (Y_s(y))_s\mbox{ enters }  B(y_0,\varepsilon)
\mbox{ at time t}} \delta_{\left(\frac{2\varepsilon^2t}{\pi Area(Q)},\frac{\Pi_Q(Y_t(y))-q_0}\eps,\frac{\angle{(\vec v_0,\Pi_V(Y_t(y)))}}\eps\right)}$$
converges in distribution (when $y$ is distributed with respect to any probability measure absolutely
continuous with respect to the Lebesgue measure on $\mathcal M$) to 
a Poisson Point Process with intensity $\lambda\times \tilde m_1$, where $\tilde m_1$ is the probability measure of density $(p,\vec u)\mapsto\frac14\langle\tilde n_p,\vec v_0\rangle^+$ on $S^1\times[-1,1]$ (with $\langle\cdot,\cdot\rangle^+$ the positive part of the scalar product in $\mathbb R^2$ and $\tilde n_p$ the inward normal vector to $S^1$ at $p$).\footnote{In the limit, the authorized normalized positions are the positions located on a semi-circle (corresponding to positions at which the vector $\vec v_0$ enters the ball) and the normalized variation of speed is uniform in $[-1,1]$}

\includegraphics[scale=0.4]{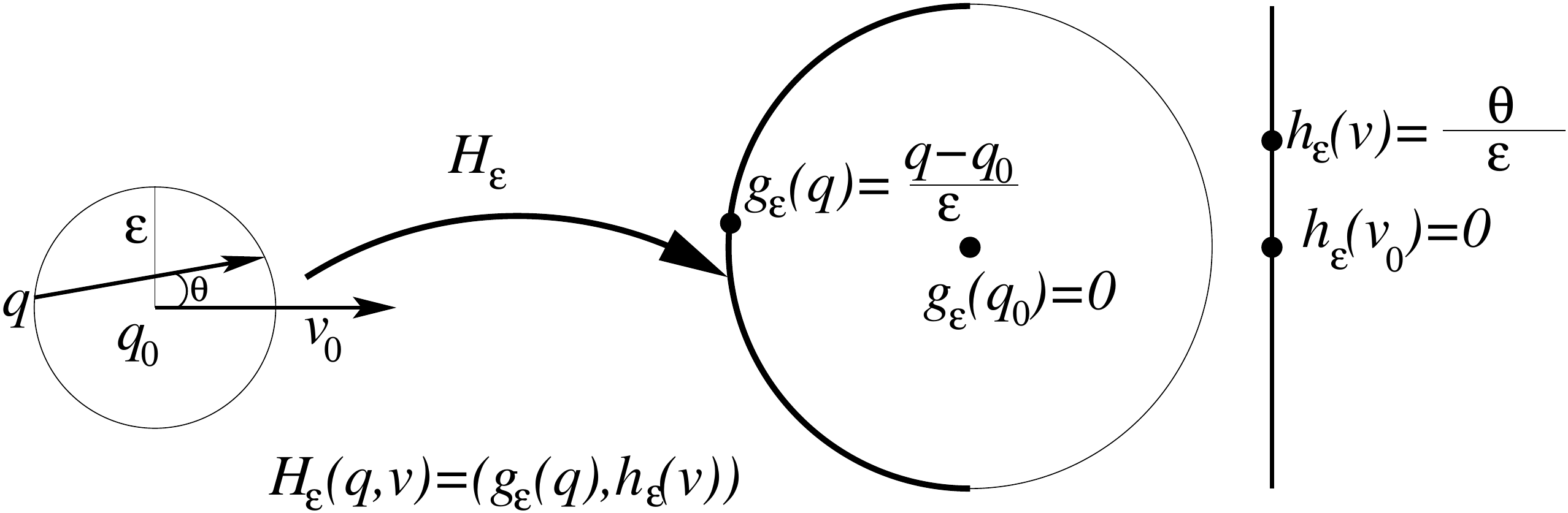}

\item the family of point processes
$$\sum_{t\ :\ (Y_s(y))_s\mbox{ enters } B(q,\varepsilon)\times S^1\mbox{ at time t}}\delta_{\left(\frac{2\pi\varepsilon t}{Area(Q)},\frac{\Pi_Q(Y_t(y))-q_0}\eps,\Pi_V(Y_t(y))\right)}$$
converges in distribution (when $y$ is distributed with respect to any probability measure absolutely
continuous with respect to the Lebesgue measure on $\mathcal M$) to 
a Poisson Point Process with intensity $\lambda\times \tilde m_0$
where $\tilde m_0$ is the probability measure with density $(p,\vec u)\mapsto\frac1{4\pi} \langle\tilde n_p,\vec u\rangle^+$ on $S^1\times S^1$.
\footnote{In the limit the authorized vectors are the unit vectors entering the ball.}

\begin{center}
\includegraphics[scale=0.4]{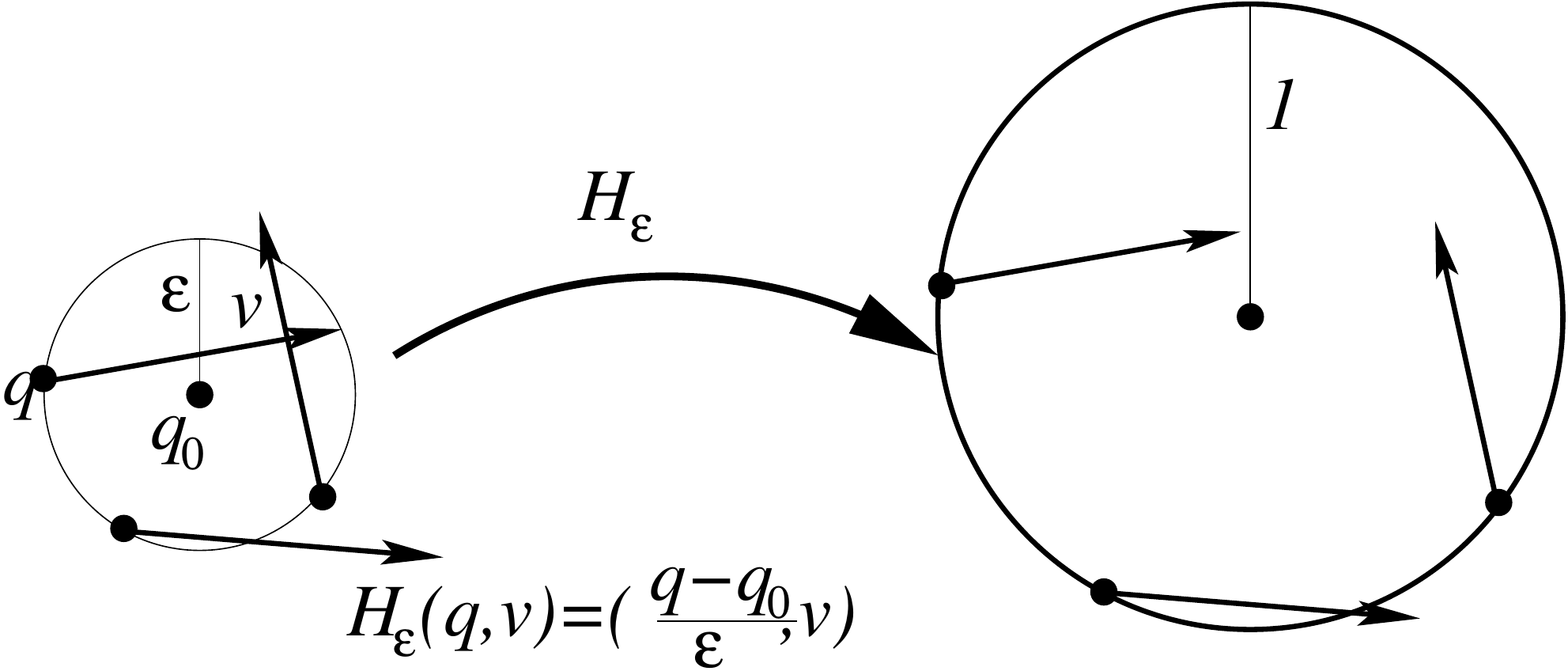}
\end{center}

\end{itemize}
\end{theorem}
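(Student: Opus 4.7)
The strategy is to reduce each item to the discrete billiard map via Theorem~\ref{THMflow}, apply Proposition~\ref{appli1} on the map, and handle the discrepancy between reflection times and actual entry times by a shift argument. The Sinai billiard map satisfies Hypothesis~\ref{HHH}-(II) with arbitrary $\alpha,\beta>0$ by \cite{young}.

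For item~(i), set $\mathcal A_\eps=B(y_0,\eps)\subset\mathcal M$ and $A_\eps=\Pi\mathcal A_\eps\subset\Omega$, the set of reflected vectors whose outgoing free flight meets $B(y_0,\eps)$. For $x=(q,\vec v)\in A_\eps$, let $s(x)\in[0,\tau(x))$ be the unique entry time; define
$$H_\eps(q,\vec v)=\bigl(\eps^{-1}(q+s(x)\vec v-q_0),\ \eps^{-1}\angle(\vec v,\vec v_0)\bigr)\in S^1\times[-1,1].$$
For generic $y_0$, $A_\eps$ is (up to a bi-Lipschitz change of coordinates, namely the inverse Poincar\'e map sending $y_0$ to its pre-reflection) a ball of radius $\sim\eps$ in $\Omega$. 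The set-up for item~(ii) is analogous with $\mathcal A_\eps=B(q_0,\eps)\times S^1$ and $H_\eps(q,\vec v)=(\eps^{-1}(q+s(x)\vec v-q_0),\vec v)\in S^1\times S^1$; here $A_\eps$ is a thin tube of thickness $\sim\eps$.

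We apply Proposition~\ref{appli1} with $p_\eps=\eps^{-\sigma}$, $\sigma\in(0,1)$. The no-early-return assumption~(i) is obtained by adapting \cite[Lemma~6.4]{ps10}, as done for the second item of Theorem~\ref{Sinai1}. The boundary condition~(ii) is immediate since $\partial A_\eps$ consists of finitely many smooth curves. For~(iii), the change of variables $(q,\vec v)\mapsto(\text{entry point on }\partial B(q_0,\eps),\vec v)$ has Jacobian equal to the perpendicular entry speed $|\langle\vec v,\tilde n_p\rangle|$ (the flux factor), so $\mu(H_\eps^{-1}(\cdot)|A_\eps)$ converges to $\tilde m_1$ in item~(i), resp.\ $\tilde m_0$ in item~(ii), once normalized to a probability. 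Condition~(iv) follows from Proposition~\ref{partition} since $H_\eps$ is locally Lipschitz with constants uniform in $\eps$.

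Theorem~\ref{THMflow} then delivers the convergence of the flow process built from $(A_\eps,H_\eps)$, but with time coordinate the reflection time $t_{\text{refl}}$ rather than the actual entry time $t_{\text{entry}}=t_{\text{refl}}+s(x)$. Since $s(x)\le\sup\tau<\infty$ by the finite horizon assumption and $\mu(A_\eps)/\bar\tau\to 0$, the normalized time shift is uniformly $o(1)$; vague convergence of point measures on $[0,\infty)\times V$ is preserved under such uniform shifts of atoms (only finitely many lie in any compact set, each shifted by $o(1)$), so the entry-time process inherits the Poisson limit. The main obstacle is the no-early-return assumption for the tube $A_\eps$ of item~(ii): the hyperbolicity-with-singularities estimates of \cite{ps10} must be tracked through this anisotropic geometry, but the adaptation is parallel to the one already performed for Theorem~\ref{Sinai1}-(ii).
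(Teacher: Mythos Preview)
Your approach is essentially the paper's: reduce to the billiard map via Theorem~\ref{THMflow}, check the hypotheses of Proposition~\ref{appli1} for the projected set $A_\eps=\Pi\mathcal A_\eps$ using the short-return estimates of \cite{ps10}, identify the limit measure via the flux Jacobian on $\partial B(q_0,\eps)$, and build $\mathcal W$ with Proposition~\ref{partition}. The paper cites \cite[Theorem~3.3]{ps10} (not Lemma~6.4) for the phase-space ball in item~(i), and it handles the limit measure by explicitly identifying the push-forward of $\mu(\cdot|A_\eps)$ under $x\mapsto Y_{\tau_{\mathcal A_\eps}}(x)$ with the boundary measure $\tilde\mu$ on $\mathcal B_\eps$, which is the rigorous version of your ``flux factor'' remark.

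One point to correct: your claim that $H_\eps$ is ``locally Lipschitz with constants uniform in $\eps$'' is false, since $H_\eps$ divides the spatial and angular variables by $\eps$, so $C_{\mathfrak h}(H_\eps)$ is at least of order $\eps^{-1}$. This does not break the argument, because Proposition~\ref{partition} only requires $C_{\mathfrak h}(H_\eps)\,\eta_\eps^{\mathfrak h}\to 0$, and here $\eta_\eps\asymp p_\eps^{-\alpha}=\eps^{\sigma\alpha}$ with $\alpha$ arbitrarily large; but you should state the actual Lipschitz bound and invoke the freedom in $\alpha$ rather than assert uniformity. Your explicit treatment of the $O(1)$ shift between reflection time and entry time is a useful addition that the paper leaves implicit.
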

\begin{proof}
We apply Theorem \ref{THMflow} to go from the discrete time to the continuous time.
Let $\mathcal A_{\varepsilon}:=B(x,\varepsilon)$ (resp. $\mathcal A_{\varepsilon}:=B(q,\varepsilon)\times S^1$). Return times to these sets have already been studied in \cite{ps10}.
We set $A_\varepsilon:=\Pi \mathcal A_\varepsilon$ and study the discrete time process associated to these sets. To this end, we apply Proposition~\ref{appli1} after checking its assumptions.
\begin{itemize}
\item 
We know that $\mu(A_\varepsilon)=2\varepsilon^2/|\partial Q|$ due to \cite[Lemma 5.1]{ps10} (resp. $\mu(A_\varepsilon)=2\pi\varepsilon/|\partial Q|$ due to \cite[Lemma 5.1]{ps10}). 
So $d=2$ (resp. $d=1$). Let $\sigma<d$ and $\delta>1$.
\item 
Note that $\mu((\partial A_\varepsilon)^{[\varepsilon^\delta]})
=o(\mu(A_\varepsilon))$.
Due to \cite[Theorem 3.3]{ps10} (resp.  \cite[Lemma 6.4]{ps10}, $  \mu(\tau_{A_\varepsilon}\le\eps^{-\sigma}|A_\varepsilon)=o(1)$.
\item We define $\mathcal B_\varepsilon:=\{(q,\vec v)\in\partial B(x_0,\varepsilon)\times S^1\ :\ \langle\tilde n_q,\vec v\rangle\ge 0\}$.
We endow it with the measure $\tilde\mu$ given by
$d\tilde\mu(q,\vec v)=\cos \varphi\, dr\, d\varphi$ with $\varphi=\angle(\tilde n_q,\vec v)$ and $r$ the curvilinear 
abscissa of $q$ on $\partial B(x_0,\varepsilon)$.
\item 
Let $\tau_{\mathcal A_\varepsilon}(y):=\inf\{t>0\ :\ Y_t(y)\in \mathcal A_\varepsilon\}$.
\item We define $H_\varepsilon:A_\varepsilon\rightarrow S^1\times[-1,1]$
which maps $x=(q,\vec v)\in A_\varepsilon$ to $\varepsilon^{-1}(\Pi_Q(Y_{\tau_{\mathcal A_\varepsilon}}(x))-q_0,\angle (\vec v_0,\vec v))$ (resp. $H_\varepsilon:A_\varepsilon\rightarrow S^1\times S^1$
which maps $x=(q,\vec v)\in A_\varepsilon$ to $(\varepsilon^{-1}\Pi_Q(Y_{\tau_{\mathcal A_\varepsilon}}(x))-q_0),\vec v)$).
\item Note that the image measure of $\mu(\cdot| A_\varepsilon)$
by $x\mapsto Y_{\tau_{\mathcal A_\varepsilon}}(x)$
corresponds to $\tilde\mu(\cdot | Y_{\tau_{\mathcal A_\varepsilon}}(A_\varepsilon))$. 
The set $Y_{\tau_{\mathcal A_\varepsilon}}(A_\varepsilon)$
consists of points of $\mathcal B_\varepsilon$
such that $\angle(\vec v_0,v)<\varepsilon$ (resp. $Y_{\tau_{\mathcal A_\varepsilon}}(A_\varepsilon)=\mathcal B_\varepsilon$).
\item  Hence $\mu(H_\varepsilon^{-1}(\cdot)|A_\varepsilon)$
is the image measure of $\mu(\cdot|Y_{\tau_{\mathcal A_\varepsilon}}(A_\varepsilon))$ by $(q,\vec v)\mapsto \varepsilon^{-1}(q-q_0,\langle(\vec v_0,\vec v))$
(resp. $\mu(H_\varepsilon^{-1}(\cdot)|A_\varepsilon)$
is the image measure of $\mu(\cdot|\mathcal B_\varepsilon)$
by $(q,\vec v)\mapsto (\varepsilon^{-1}q,\vec v)$).

Hence we obtain the convergence in distribution of
these families of measures.
\item For the construction of $\mathcal W$ we use Proposition~\ref{partition}.
\end{itemize}
\end{proof}
\section{Successive visits in a small neighborhood of an hyperbolic periodic point}\label{periodic}

\subsection{General results around a periodic hyperbolic orbit}
We consider the case of a periodic point $x_0$ of smallest period $p$ (i.e. $p$ is the smallest $n>0$ such that $T^nx_0=x_0$).

By periodicity, returns to $B(x_0,\varepsilon)$ appear in clusters, so that we cannot
hope that the return process is represented by a simple Poisson process. However, the occurrence of clusters
 should be well separated and have a chance to be represented by a simple Poisson Process.

Thus we define $A_\varepsilon$ as the set of points
of $B(x_0,\varepsilon)$ leaving $B(x_0,\varepsilon)$ for a time at least $q_0$, i.e.
$$
A_\varepsilon:=B(x_0,\varepsilon)\setminus \bigcup_{j=1}^{q_0} T^{-jp}
B(x_0,\varepsilon)
$$ 
and consider $\mathcal{N}_\eps$ defined by \eqref{PointProcess} with this choice of $A_\eps$. This definition of $A_\eps$ essentially records the last passage among a series of hitting to the ball.
We emphasize that in general, one has to consider $q_0>1$ to avoid clustering of occurrences of $A_\varepsilon$ due to finite time effects\footnote{
Indeed, consider the determinant one hyperbolic matrix $M =\begin{pmatrix}-0.2 & 1.8\\ 0.6 & -0.4\end{pmatrix}$. 
The vector
$v=\begin{pmatrix}0.5\\ 0.7\end{pmatrix}$ belongs to the unit ball $B_1$, $Mv\not\in B_1, M^2v\in B_1$ and $M^3v\not\in B_1$.
Assume that $T$ preserves the Lebesgue measure $\mu$ and has a fixed point $x_0$ such that $D_{x_0}T=M$.
Let $A_\varepsilon=B(x_0,\varepsilon)\setminus T^{-1}B(x_0,\varepsilon)^c$. 
One easily shows that the inequality 
\[
\mu(A_\varepsilon\cap T^{-2}A_\varepsilon) \ge \mu(B(x_0,\varepsilon)\cap T^{-1} B(x_0,\varepsilon)^c \cap T^{-2}B(x_0,\varepsilon)\cap T^{-3}B(x_0,\varepsilon)^c),
\]
contradicts the assumption that $\Delta(\{A_\eps\})=o(\mu(A_\eps))$.
}.

\begin{lemma}\label{LEM0}
Assume $x_0$ is a hyperbolic fixed point of $T$ and that $T$ is $C^{1+\alpha}$ in a neighborhood $U$ of the orbit $x_0,\ldots,T^{p-1}x_0$.
Then there exist an integer $q_0$ and $a>0$ such that for any $\varepsilon>0$ sufficiently small,
for any $n=1,\ldots,\lfloor a\log 1/\varepsilon\rfloor $, $A_\varepsilon \cap T^{-n}A_\varepsilon=\emptyset$, where $A_\varepsilon:=B(x_0,\varepsilon)\setminus \bigcup_{j=1}^{q_0} T^{-jp}
B(x_0,\varepsilon)$.
\end{lemma}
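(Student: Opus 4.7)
The plan is to combine two observations. The trivial one: by the very definition of $A_\varepsilon$, any $y \in A_\varepsilon$ satisfies $T^n y \notin B(x_0,\varepsilon)$ for $n = 1, \ldots, q_0$, hence automatically $T^n y \notin A_\varepsilon$ in this short range. The dynamical one: hyperbolicity at $x_0$ forces any orbit starting in $A_\varepsilon$ to leave $B(x_0,\varepsilon)$ for a time of order $\log(1/\varepsilon)$. The crucial point is that the requirement $Ty \notin B(x_0,\varepsilon)$ in the definition of $A_\varepsilon$ forces $y$ to have a sizable component in the local unstable direction at $x_0$.

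First I would choose local adapted coordinates on a neighborhood $U$ of $x_0$ (identifying $x_0$ with the origin), using the stable/unstable splitting $E^s \oplus E^u$ of $D_{x_0}T$ and an adapted metric so that, for $y \in U$ with $Ty \in U$, one has $\|(Ty)^s\| \le \lambda^{-1}\|y^s\|$ and $\lambda\|y^u\| \le \|(Ty)^u\| \le \Lambda\|y^u\|$ for some fixed $1 < \lambda \le \Lambda$, where $\|\cdot\|$ is the sup-norm in the splitting. The $C^{1+\alpha}$ regularity enters here: the nonlinear remainder is $O(\|y\|^{1+\alpha})$ and can be absorbed by shrinking $U$ and slightly adjusting $\lambda,\Lambda$. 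I would then fix $q_0 := \lceil \log\Lambda/\log\lambda\rceil$ and any $a \in (0, 1/\log\lambda)$, and take $\varepsilon_0$ small enough that $\lambda^n \varepsilon < r_U$ for every $\varepsilon \in (0,\varepsilon_0]$ and every $n \le \lfloor a\log(1/\varepsilon)\rfloor$, where $r_U$ is the size of $U$.

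The core step is then direct. If $y \in A_\varepsilon$, then $\|(Ty)^s\| \le \lambda^{-1}\varepsilon < \varepsilon$, so the condition $Ty \notin B(x_0,\varepsilon)$ forces $\|(Ty)^u\| > \varepsilon$, hence $\|y^u\| > \varepsilon/\Lambda$. An immediate induction then shows that the orbit $y, Ty, \ldots, T^n y$ stays in $U$ (since $\lambda^j\varepsilon < r_U$ for every $j \le n$) and that $\|(T^j y)^u\| \ge \lambda^j \varepsilon/\Lambda$ throughout. For $n > q_0 \ge \log\Lambda/\log\lambda$ this yields $\lambda^n > \Lambda$, hence $\|(T^n y)^u\| > \varepsilon$ and $T^n y \notin B(x_0,\varepsilon)$. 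Combined with the trivial observation for $n \le q_0$, this gives $A_\varepsilon \cap T^{-n}A_\varepsilon = \emptyset$ for every $n \le \lfloor a\log(1/\varepsilon)\rfloor$.

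The main delicate point I expect is the careful bookkeeping of the nonlinear remainder in the adapted coordinates over $O(\log 1/\varepsilon)$ iterations. This is by now standard: at step $j$ the error is at most $C(\lambda^j\varepsilon)^{1+\alpha}$, which is a factor $\le C r_U^\alpha$ smaller than the dominant linear term $\lambda^j\varepsilon$, so for $U$ sufficiently small one can absorb it into a slightly smaller effective value of $\lambda$ (still larger than $1$), and all the estimates above go through unchanged.
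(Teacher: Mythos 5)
Your overall strategy -- use hyperbolicity at $x_0$ to force the orbit to stay out of $B(x_0,\varepsilon)$ for a time of order $\log(1/\varepsilon)$, after the short ``trivial'' window given by the definition of $A_\varepsilon$ -- is the same as in the paper, but there is a genuine gap in the core step. You introduce an adapted sup-norm $\|\cdot\|$ on $E^s\oplus E^u$ so that $\|(Ty)^s\|\le\lambda^{-1}\|y^s\|$ and $\lambda\|y^u\|\le\|(Ty)^u\|\le\Lambda\|y^u\|$, and then argue: $y\in A_\varepsilon$ gives $\|(Ty)^s\|\le\lambda^{-1}\varepsilon$, and $Ty\notin B(x_0,\varepsilon)$ then forces $\|(Ty)^u\|>\varepsilon$. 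But $A_\varepsilon$ and $B(x_0,\varepsilon)$ are metric balls in the \emph{original} Riemannian metric, not in your adapted sup-norm. The two norms are equivalent with some distortion constant $c_2/c_1>1$, but this constant depends only on $D_{x_0}T$ (the angle between $E^s$ and $E^u$, the conditioning of the change of basis) and does \emph{not} improve by shrinking $U$ -- so it cannot be ``absorbed'' the way the $C^{1+\alpha}$ remainder can. Tracked carefully, $y\in B(x_0,\varepsilon)$ only gives $\|y^s\|\le c_2\varepsilon$ and $Ty\notin B(x_0,\varepsilon)$ only gives $\max(\|(Ty)^s\|,\|(Ty)^u\|)\ge c_1\varepsilon$; your conclusion $\|(Ty)^u\|>\varepsilon$ requires $\lambda>c_2/c_1$, which need not hold. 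To repair this you would have to use the full strength of the definition of $A_\varepsilon$ (that $T^jy\notin B(x_0,\varepsilon)$ for all $j\le q_0$, not just $j=1$) to iterate the stable contraction until it beats the distortion, and choose $q_0$ accordingly -- your $q_0=\lceil\log\Lambda/\log\lambda\rceil$ is not the right threshold. The paper sidesteps the whole adapted-norm issue by proving a small linear-algebra fact (Lemma~\ref{matrice}: for hyperbolic $A$ there is $q$ with $\|A^qv\|\le\tfrac14\max(\|v\|,\|A^nv\|)$ for all $n\ge2q$) stated directly in the ambient norm, and then combines it with a single $C^{1+\alpha}$ Taylor estimate for $T^n$; no adapted coordinates are ever introduced.

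Two further slips, less serious: the induction that keeps the orbit in $U$ and the nonlinearity estimate should be driven by $\Lambda$, not $\lambda$ -- the upper bound is $\|T^jy\|\le\Lambda^j\varepsilon$, so the constraint is $a<1/\log\Lambda$; the lower bound $\lambda^j\varepsilon/\Lambda$ you quote in the parenthetical does not show the orbit stays in $U$. Finally, for a genuine periodic point of smallest period $p>1$ the proof must first rule out returns $T^nx\in B(x_0,\varepsilon)$ at times $n$ that are not multiples of $p$ (otherwise the iterates whose indices are not multiples of $p$ are near $T^{n\bmod p}x_0\ne x_0$, outside the local chart); the paper does this by noting that $\|T^nx_0-x_0\|\le L_0^n\varepsilon+\varepsilon$ must then be large, forcing $p\mid n$, and then replaces $T$ by $T^p$. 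Your write-up silently assumes $p=1$.
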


For $\varepsilon>0$ small enough, we define, as in Theorem \ref{ThmgeneAppli1},
$H_\varepsilon:B(x_0,\varepsilon)\mapsto B(0,1)$ by $H_\varepsilon:=\varepsilon^{-1}\exp^{-1}$.
We are interested in the behaviour of the point processes $(\tilde{\tilde{\mathcal N}}_\varepsilon)_\varepsilon$ defined by
\begin{equation}\label{tildetildeNeps}
 \tilde{\tilde{\mathcal N}}_\varepsilon(x):=\sum_{n\ :\ T^n(x)\in B(x_0,\varepsilon)} \delta_{(n\mu( B(x_0,\varepsilon)),H_\varepsilon(T^n(x)))}.
\end{equation}
The extremal index defined by 
\[
\theta_\eps = \frac{\mu(A_\eps)}{\mu(B(x_0,\varepsilon))}
\]
relates the above process to
\begin{equation}\label{tildeNeps}
 \tilde{\mathcal N}_\varepsilon(x):=\sum_{n\ :\ T^n(x)\in B(x_0,\varepsilon) \setminus \{x_0\}} 
 \delta_{(n\mu( A_\varepsilon),H_\varepsilon(T^n(x)))}
\end{equation}
 in an obvious way.
Indeed, $\tilde{\tilde{ {\mathcal N}}}_\eps=\Theta_\eps(\tilde{\mathcal N}_\eps)$
where 
\begin{equation}
\Theta_\eps \left(\sum_n \delta_{(t_n,x_n)}\right)=\sum_n \delta_{(\theta_\eps^{-1}t_n,x_n)}.
\end{equation}
We see these processes as point processes on $[0,+\infty)\times \dot B(0,1)$ (where $\dot B(0,1)=B(0,1)\setminus\{0\}$ is the open punctured ball). 

Assume that $\Omega$ is a $d$-Riemmannian manifold.
A $p$-periodic point of $T$ is said to be hyperbolic if $T^p$
defines a $C^1$ diffeomorphism between two neighborhoods of $x_0$ and if $DT_{x_0}$ admits no eigenvalue of modulus 1.
We write $E^u_{x_0}$ for the spectral space associated
to eigenvalues of modulus strictly larger than 1.
\begin{theorem}\label{compoundPoisson}
Assume that $T^{-1}$ is well defined on a small neighborhood of $x_0$ and
that $(\mathcal N_\varepsilon)_\varepsilon$ converges in distribution to a Poisson point process $\mathcal P$ with intensity $\lambda\times m$,
then the sequence of point processes $(\tilde{\mathcal N}_\varepsilon)_\varepsilon$
converges in distribution to the point process $\mathcal N=\Psi(\mathcal P)$ on $[0,+\infty)\times \dot B(0,1)$, with
$$\Psi\left(\sum_{n}\delta_{(t_n,x_n)}\right):= 
      \sum_{n\, :\, x_n\ne 0} \sum_{k=0}^{\ell_{x_n}}  \delta_{(t_n,DT_{x_0}
^{-kp}(x_n))}\, ,$$
where $\ell_y:=\inf\{k\ge 0\ :\ DT_{x_0}^{-kp}(y) \in B(0,1)\setminus
      \bigcup_{j=1}^{q_0}DT_{x_0}^{jp}B(0,1)\}$.
\end{theorem}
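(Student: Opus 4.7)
The plan is to express $\tilde{\mathcal N}_\eps$, up to a negligible error, as the deterministic transformation $\Psi$ applied to $\mathcal N_\eps$, and then to invoke the continuous mapping theorem together with the assumed convergence $\mathcal N_\eps\Rightarrow\mathcal P$. The key ingredient is the local linearization of $T^{-p}$ at the fixed point $x_0$ of $T^p$.

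First I would establish the linearization. Since $x_0$ is a hyperbolic fixed point of $T^p$, $T^{-1}$ is defined nearby, and (as in Lemma~\ref{LEM0}) $T$ is $C^{1+\alpha}$ on a neighborhood of the orbit of $x_0$, the map $T^{-p}$ is a local $C^{1+\alpha}$ diffeomorphism fixing $x_0$. Taylor expansion then yields, for each fixed $k\ge 0$,
\[
H_\eps\bigl(T^{-kp}(\exp_{x_0}(\eps\eta))\bigr) = DT_{x_0}^{-kp}(\eta)+O_k(\eps^\alpha),
\]
uniformly in $\eta$ on bounded subsets of $B(0,1)$.

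Next I would identify the cluster attached to each point of $\mathcal N_\eps$. A visit $z^*=T^n(x)\in A_\eps$ contributing $(t_n,\eta_n):=(n\mu(A_\eps),H_\eps(z^*))$ to $\mathcal N_\eps$ is the last element of a backward run $T^{n-kp}(x)\in B(x_0,\eps)$, $k=0,1,\ldots,L^\eps$, of visits to the ball at period $p$. By Lemma~\ref{LEM0}, distinct elements of $A_\eps$ on the same orbit are separated by at least $a\log(1/\eps)$ time steps, so the runs are pairwise disjoint and every visit counted by $\tilde{\mathcal N}_\eps$ belongs to exactly one of them. The linearization step gives that the earlier visits contribute to $\tilde{\mathcal N}_\eps$ at times $t_n-kp\mu(A_\eps)\to t_n$ and normalized positions $DT_{x_0}^{-kp}(\eta_n)+O_k(\eps^\alpha)$. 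Matching the linearized backward cluster length with the definition of $\ell_\eta$ (which precisely encodes the $q_0$-consecutive-exit condition defining $A_\eps$), I get $L^\eps=\ell_{\eta_n}$ for $\eps$ small enough, provided $\eta_n$ avoids the critical set
\[
S := \partial B(0,1)\cup\bigcup_{j=1}^{q_0}\partial DT_{x_0}^{jp}B(0,1).
\]

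Finally, $S$ is a finite union of smooth codimension-one submanifolds, hence $m$-negligible for the absolutely continuous intensity measures $m$ arising in applications; so $\Psi:M_p(E)\to M_p(E)$ is continuous at $\mathcal P$-almost every realization. Combining the approximation $\tilde{\mathcal N}_\eps\approx\Psi(\mathcal N_\eps)$ with the continuous mapping theorem and $\mathcal N_\eps\Rightarrow\mathcal P$ yields $\tilde{\mathcal N}_\eps\Rightarrow\Psi(\mathcal P)$ as announced. The main technical obstacle will be making this approximation uniform enough to pass to the limit for bounded continuous test functions, since clusters whose endpoint $\eta_n$ concentrates near $S$, or clusters of arbitrarily large length (which occur as $\eta_n$ approaches the stable manifold of $DT_{x_0}^{-p}$), could produce a non-vanishing discrepancy between $L^\eps$ and $\ell_{\eta_n}$; this is to be controlled by a non-concentration estimate for $m_\eps$ near $S$ (analogous to~\eqref{neglec}) together with a truncation argument restricting to clusters of bounded length.
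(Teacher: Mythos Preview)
Your strategy coincides with the paper's: write $\tilde{\mathcal N}_\eps=\Psi_\eps(\mathcal N_\eps)$ for an $\eps$-dependent map built from the local linearization $H_\eps T^{-kp}H_\eps^{-1}\to DT_{x_0}^{-kp}$, identify clusters via Lemma~\ref{LEM0}, and pass to the limit. The only substantive difference is in how that limit is taken. You invoke the continuous mapping theorem, which forces you to assume $m(S)=0$ (or, as you propose at the end, an extra non-concentration estimate for $m_\eps$ near $S$); but the theorem makes no regularity hypothesis on $m$, and in the SRB application that follows $m$ can be singular. The paper sidesteps this by working directly with test sets $R=\bigcup_i(r_i,s_i)\times F_i$ with $F_i$ relatively compact in $\dot B(0,1)$ and chosen so that $\mathbb E[\mathcal N(\partial R)]=0$. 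Relative compactness in the punctured ball yields a uniform bound $K$ on the cluster depth of any point landing in $\bigcup_iF_i$, which gives the finite representations $\tilde{\mathcal N}_\eps(R)=\mathcal N_\eps\bigl(\bigcup_{k=0}^{K}\varphi_{\eps,k}^{-1}(R)\bigr)$ and $\mathcal N(R)=\mathcal P\bigl(\bigcup_{k=0}^{K}\varphi_k^{-1}(R)\bigr)$; the zero-boundary condition then allows one to pass to the limit directly from $\mathcal N_\eps\Rightarrow\mathcal P$, without any additional assumption on $m$. Your outline is correct, but to match the stated theorem you should replace the appeal to absolute continuity of $m$ by this test-set argument.
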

\begin{proof}
Note that $m$ has support in $ B(0,1)\setminus
      \bigcup_{j=1}^{q_0}DT_{x_0}^{-jp}B(0,1)$.

Observe that, for every $\varepsilon$ small enough, $\tilde{\mathcal N}_\varepsilon(x)$ is the image measure of 
$\mathcal N_\varepsilon(x)$ by
$$\Psi_\varepsilon:\sum_{n}\delta_{(t_n,x_n)}\mapsto   \sum_{n} 
  \sum_{k=0}^{\ell_{x_n,\varepsilon}} 
    \delta_{(t_n-kp\mu(A_\varepsilon),H_\varepsilon T^{-kp}H_\varepsilon^{-1}(x_n))},$$
with
$$ \ell_{y,\varepsilon}
:=\inf\{k\ge 0\ :\ H_\varepsilon T^{-kp}(H_\varepsilon^{-1}y)\in B(0,1)\setminus \bigcup_{j=1 }^{q_0}T^{jp}B(0,1)\}.$$
Observe that, for every $x\in B(0,1)$,
$$ \lim_{\varepsilon\rightarrow 0}
\ell_{x,\varepsilon}=\ell_x\, ,$$
and that, for every $k$, every $t>0$ and every $x\in B(0,1)$, 
$$\lim_{\varepsilon\rightarrow 0} (t-kp\mu(A_\varepsilon),H_\varepsilon T^{-kp}H_\varepsilon^{-1}(x))=
   (t,DT_{x_0}^{-kp}(x)).$$
Let us consider a set $R=\cup_i ]r_i,s_i[\times F_i$ such that $\mathbb E[\mathcal N(\partial R)]=0$ where $0\le r_i< s_i\le r_{i+1}$ and where $F_i$ are open precompact subsets of 
$\dot B(0,1)$.
Note that, since the closure of $\cup_iF_i$ does not contain 0, there exists $K>0$ and $\varepsilon_0>0$ (depending on the $F_i$'s) such that
$$\sup_{\varepsilon\in(0,\varepsilon_0),\, y\in\cup_i F_i}\inf\{k\ge 0\ :\ H_\varepsilon^{-1}T^{kp}H_\varepsilon(y)\in A_\varepsilon\}<K\, $$
and
$$\sup_{\varepsilon\in(0,\varepsilon_0),\, y\in\cup_i F_i}\inf\{k\ge 0\ :\ H_\varepsilon^{-1}T^{-kp}H_\varepsilon(x)\in B(0,1)\setminus
      \bigcup_{j=1}^{q_0}T^{jp}B(0,1)\}<K\, $$
so that 
$$\forall y\in \bigcup_i F_i, \left[\exists x\in A_\varepsilon,\ \exists k\in \{0,...,\ell_{x,\varepsilon}\},\  y=H_\varepsilon T^{-kp}(x)\right]\ \ \Rightarrow \ \  \ell_{x,\varepsilon}\le 2K\, .$$
By definition of $\ell_{x,\varepsilon}$,
for every $y\in  \bigcup_i F_i$, every $x\in A_\varepsilon$, every $k\in \{0,...,\ell_{x,\varepsilon}\}$ such that $y=H_\varepsilon T^{-kp}(x)$, we have:
$$x=T^{\tau^{(0)}_{A_{\varepsilon}}} (y)$$
with $\tau^{(0)}_A(y):=\inf\{k\ge 0\ :\ T^k(y)\in A\}$.
Due to lemma \ref{LEM0}, there exists $\varepsilon_1\in(0,\varepsilon_0)$ such that, for every $\varepsilon\in(0,\varepsilon_1)$, 
$$\forall y\in \bigcup_i F_i,\quad \{T^k(y),\ k=0,...,K\}\cap A_\varepsilon=\{T^{\tau^{(0)}_{A_{\varepsilon}}} (y)\} \, .$$
Therefore, for every $\varepsilon\in (0,\varepsilon_1)$
$$ \tilde{\mathcal N}_\varepsilon (R)=\tilde\Psi_{\varepsilon,K}(\mathcal N_\varepsilon) (R) =\mathcal N_{\varepsilon}\left(\bigcup_{k=0}^K\varphi_{\varepsilon,k}^{-1}(R)\right)\, ,$$
with
$$\tilde{\Psi}_{\varepsilon,K}:\sum_{n}\delta_{(t_n,x_n)}\mapsto   \sum_n 
  \sum_{k=0}^{K} 
    \delta_{(t_n-kp\mu(A_\varepsilon),H_\varepsilon T^{-kp}H_\varepsilon^{-1}(x_n))} $$
and with $\varphi_{\varepsilon,k}:(t,x)\mapsto (t-kp\mu(A_\varepsilon),H_\varepsilon T^{-kp}H_\varepsilon^{-1}(x))$.
Arguing analogously for $\mathcal N$ and $\mathcal P$ instead of
$\tilde{\mathcal N}_\varepsilon$ and $\mathcal N_\varepsilon$,
we obtain 
$$\mathcal N(R)=\tilde\Psi_{K}(\mathcal P) (R) =\mathcal P\left(\bigcup_{k=0}^K\varphi_{k}^{-1}(R)\right)\, $$
and
$$(\lambda\times m)(\bigcup_{k=0}^K\varphi_k^{-1}(\partial R)))=\mathbb E[\mathcal N(\partial R)]=0\, ,$$
with
$$\tilde{\Psi}_{K}:\sum_{n}\delta_{(t,x)}\mapsto   \sum_n 
  \sum_{k=0}^{K} 
    \delta_{(t_n,H_\varepsilon T^{-kp}H_\varepsilon^{-1}(x_n))}\, .$$
and
$$ \varphi_k:(t,x)\mapsto (t\mu(A_\varepsilon),DT_{x_0}^{-kp}(x))\, .$$
Since $(\mathcal N_\varepsilon)_\varepsilon$ converges in distribution to $\mathcal P$, we conclude that
$\left(\mathcal N_\varepsilon\left(\bigcup_{k=0}^K\varphi_{k}^{-1}(R)\right)\right)_\varepsilon $
converges in distribution to $\mathcal P\left(\bigcup_{k=0}^K\varphi_{k}(R)\right) $.
Moreover 
$$\left|\mathcal N_\varepsilon\left(\bigcup_{k=0}^K\varphi_{k}^{-1}(R)\right)-\mathcal N_\varepsilon\left(\bigcup_{k=0}^K\varphi_{\varepsilon,k}(R)\right)\right|\le  \mathcal N_\varepsilon\left(\bigcup_{k=0}^K\varphi_{k}^{-1}(\partial R^{[\eta_\varepsilon]})\right)$$
with $\lim_{\varepsilon\rightarrow 0}\eta_\varepsilon=0$.
Since $(\mathcal N_\varepsilon)_\varepsilon$ converges in distribution
to $\mathcal P$ and since $\mathcal N(\partial R)=0$, we conclude that
$$\left(\mathcal N_\varepsilon\left(\bigcup_{k=0}^K\varphi_{k}^{-1}(R)\right)-\mathcal N_\varepsilon\left(\bigcup_{k=0}^K\varphi_{\varepsilon,k}(R)\right)\right)_\varepsilon$$
converges in distribution to 0.
Therefore $(\tilde{\mathcal N}_\varepsilon(R))_\varepsilon$
converges in distribution to $\mathcal N(R)$.
\end{proof}

Theorem~\ref{compoundPoisson} is designed for invertible systems, near a hyperbolic periodic point, and does not apply to expanding maps. Indeed, in such a non-invertible situation, one has to define the set $A_\eps$ with the first passage in the ball $B(x_0,\eps)$ and not the last. More precisely, one has to set 
$$A_\eps=T^{-(q_0+1)}B(x_0,\eps)\setminus \cup_{j=1}^{q_0}T^{-j} B(x_0,\eps).
$$
We leave to the reader the generalization of 
Theorem~\ref{compoundPoisson} and the result of the next section to this case.

\subsection{SRB measure for Anosov maps}

We now consider a $C^2$ Anosov map $T$ on the $d$-dimensional riemaniann manifold $\Omega$. 
We assume that the measure $\mu$ is the SRB measure of the system \cite{kh},
and that $x_0$ is a periodic point of $T$ of smallest period $p$.

\begin{theorem}
We assume that $\mu(B(x_0,2\eps))\eps^{b_0} = o(\mu(B(x_0,\eps)))$ for some $b_0>0$ sufficiently small\footnote{Indeed this happens when for example the pointwise dimension of $\mu$ at $x_0$ exists and is bounded away from $0$ and $\infty$.}.
Then 
\begin{enumerate}
\item[(a)]
The point process $\N_\eps$ for entrances in $A_\eps$ is asymptotically Poisson $\mathcal P_\eps$, of intensity $\lambda\times m_\eps$
\item[(b)]
The point process $\tilde\N_\eps$ for entrances in $B(x_0,\eps)$ is asymptotically $\Psi(\mathcal P_\eps)$.
\item[(c)]
The point process $\tilde{\tilde{\N}}_\eps$ for entrances in $B(x_0,\eps)$ is asymptotically $\Theta_\eps(\Psi(\mathcal P_\eps))$.
\item[(d)]
The return time point process 
$$\mathcal {T}_\eps:=\sum_{n\colon T^nx\in B(x_0,\eps)}\delta_{n\mu(B(x_0,\eps))}$$ 
is asymptotically the compound Poisson point process $\pi\Theta_\eps(\Psi(\mathcal P_\eps))$,
where $\pi$ is the projection on the time axis.
\end{enumerate}
\end{theorem}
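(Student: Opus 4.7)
The plan is to establish (a) as the substantive step, from which (b), (c), (d) will follow by essentially formal manipulations. For (a), I would apply Proposition~\ref{appli1} in its Theorem~\ref{THM1} variant (passing through subsequences, since no vague convergence of $m_\eps$ is asserted). For a $C^2$ Anosov system with SRB measure, the Young tower construction yields Hypothesis~\ref{HHH}(II) with arbitrarily large $\alpha$ and $\beta$. Setting $p_\eps:=\lfloor a\log(1/\eps)\rfloor$ with $a$ the constant from Lemma~\ref{LEM0}, assumption (i) of Proposition~\ref{appli1} holds in the strongest possible way: $\mu(\tau_{A_\eps}\le p_\eps\mid A_\eps)=0$ for all small $\eps$. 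Assumptions (iii)--(iv) reduce, through Proposition~\ref{partition} applied to the Lipschitz maps $H_\eps=\eps^{-1}\exp_{x_0}^{-1}$, to choosing a generating family $\mathcal W$ of open rectangles in $B(0,1)$ and the auxiliary scale $\eta_\eps:=p_\eps^{-\alpha}$ (taking $\alpha$ large enough that $\eps^{-1}p_\eps^{-\alpha}\to 0$).

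The main obstacle I anticipate is assumption~(ii), the boundary estimate $\mu((\partial A_\eps)^{[Cp_\eps^{-\alpha}]})=o(\mu(A_\eps))$. Since $\partial A_\eps \subset \partial B(x_0,\eps)\cup \bigcup_{j=1}^{q_0} T^{-jp}\partial B(x_0,\eps)$ and $T^{-jp}$ is bi-Lipschitz on a fixed neighbourhood of the periodic orbit, the $\delta$-thickening of $\partial A_\eps$ is controlled by a constant multiple of $\mu(B(x_0,\eps+C\delta)\setminus B(x_0,\eps-C\delta))$. Choosing $\alpha$ large enough makes $\delta=p_\eps^{-\alpha}$ smaller than any polynomial in $\eps$, in particular $\delta\ll\eps^{b_0}$; however, the hypothesis $\mu(B(x_0,2\eps))\eps^{b_0}=o(\mu(B(x_0,\eps)))$ bounds the whole ball $B(x_0,2\eps)$ from above but not directly the thin annulus. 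To bridge this gap I would invoke the partition-based alternative from Remark~\ref{first}, replacing $(\partial A_\eps)^{[k_\eps]}$ by $\partial A_\eps^{[k_\eps]}$: only Young tower cells of anomalously large diameter near $\partial A_\eps$ contribute, and these can be handled using the exponential tails of the return time function together with the weak doubling hypothesis. This geometric step is the main technical hurdle of the proof.

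Parts (b), (c), (d) then unfold cleanly. For (b), I would extract along any subsequence $\eps_k\to 0$ a further subsequence along which $m_{\eps_k}$ converges vaguely to some $m$. Theorem~\ref{compoundPoisson} applied to this subsequence gives $\tilde{\N}_{\eps_k}\to\Psi(\mathcal P)$ in distribution where $\mathcal P$ is Poisson of intensity $\lambda\times m$. Simultaneously $\mathcal P_{\eps_k}\to\mathcal P$ by construction, and the continuous mapping theorem (applied to $\Psi$, which is continuous at point measures whose atoms avoid the relevant boundary set, a $\mathcal P$-a.s.\ condition) yields $\Psi(\mathcal P_{\eps_k})\to\Psi(\mathcal P)$. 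Uniqueness of the limit along every subsequence then upgrades this to the approximation $\tilde{\N}_\eps\approx\Psi(\mathcal P_\eps)$ in the sense of~\eqref{approx}. Part (c) is immediate from the identity $\tilde{\tilde{\N}}_\eps=\Theta_\eps(\tilde{\N}_\eps)$ and continuity of the affine rescaling $\Theta_\eps$ on $M_p(E)$. Finally (d) follows by projecting onto the time axis, $\mathcal T_\eps=\pi\tilde{\tilde{\N}}_\eps\approx\pi\Theta_\eps(\Psi(\mathcal P_\eps))$: since $\Psi$ attaches to each atom $(t_n,x_n)$ of $\mathcal P_\eps$ a cluster of $\ell_{x_n}+1$ atoms all at time $t_n$, the projection produces a Poisson point process on $[0,\infty)$ of time-intensity $\theta_\eps$ with i.i.d.\ marks $1+\ell_{Y}$ where $Y\sim m_\eps$, i.e.\ the announced compound Poisson description.
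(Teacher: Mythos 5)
There is a genuine gap in your choice of the cutoff $p_\eps$. You take $p_\eps=\lfloor a\log(1/\eps)\rfloor$, which makes assumption~(i) of Proposition~\ref{appli1} hold trivially via Lemma~\ref{LEM0}. But then the thickening scale in assumption~(ii) is $\delta_\eps\sim p_\eps^{-\alpha}=(a\log(1/\eps))^{-\alpha}$, and you assert this is ``smaller than any polynomial in $\eps$'' for $\alpha$ large. This is exactly backwards: $(\log(1/\eps))^{-\alpha}$ decays \emph{slower} than any power of $\eps$, so $\eps=o((\log(1/\eps))^{-\alpha})$ for every fixed $\alpha$. The thickened set $(\partial A_\eps)^{[\delta_\eps]}$ therefore eventually swallows all of $B(x_0,\eps)$ and condition~(ii) cannot hold with this $p_\eps$. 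For the same reason the requirement $\eps^{-1}p_\eps^{-\alpha}\to0$ that you invoke to apply Proposition~\ref{partition} fails. The scale $\delta_\eps$ must be a positive power of $\eps$ larger than $1$, which forces $p_\eps$ to be polynomial in $1/\eps$; this in turn makes assumption~(i) no longer trivial.

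The paper's proof resolves this trade-off by taking $p_\eps=\eps^{-\sigma}$ with $\sigma=b-b_0>0$. For assumption~(i) this requires summing $\mu(A_\eps\cap T^{-n}A_\eps)$ over $n$ in the intermediate range $\lfloor a\log(1/\eps)\rfloor<n\le p_\eps$. That sum is controlled by Lemma~\ref{epsb}, which disintegrates along unstable leaves and uses the transversality of $T^{-n}W$ to the ball plus distortion to get $\mu(A_\eps\cap T^{-n}A_\eps)\le\eps^{b}\mu(B(x_0,2\eps))$; the weak doubling hypothesis $\mu(B(x_0,2\eps))\eps^{b_0}=o(\mu(B(x_0,\eps)))$ then makes the whole sum $o(\mu(B(x_0,\eps)))$, and Lemma~\ref{extremal} (the extremal index is bounded below) converts this into $o(\mu(A_\eps))$. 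Assumption~(ii) is then obtained from Lemma~\ref{lem:couronne}, which bounds the measure of the annulus $B(x_0,\eps)\setminus B(x_0,\eps-\eps^\delta)$ along unstable leaves, again by disintegration --- not from the partition-based variant of Remark~\ref{first} and exponential return-time tails that you suggest, which is a different and vaguer route. Your treatment of parts~(b),~(c),~(d), on the other hand, matches the paper's reductions.
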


The compound Poisson distribution (d) has already been established for few dynamical systems~\cite{ht,hv,fft,cnz}, typically with strong assumptions on the dimension (1 or 1+1), the measure (non singular)  and the shape of the balls (e.g. products of stable and unstable balls).
We point out that our result is valid for balls $B(x_0,\eps)$ in the original Riemmanian metric and with a possibly singular measure; Note that the convergence of the extremal exponent $\theta_\eps$ is not expected with singular measures.

\begin{proof}
Due to the proof of Theorem \ref{THM1}, we assume that $(m_\eps)_\eps$ converges to some $m$.
(b) will follow from (a) by Theorem \ref{compoundPoisson}. 
The fact that (b) implies (c), which implies (d) comes from the definitions.

Let's prove (a) by applying Proposition~\ref{appli1}. It is well known that our system satisfies Hypothesis~\ref{HHH}-(II) for any $\alpha,\beta>0$. Our assumption on $(m_\eps)_\eps$
ensures Assumption (iii) of Proposition~\ref{appli1}. 

Choose $b_0,b$ such that $0<b_0<b<a d_u\log\lambda^{-1}$, with $a$ given by Lemma~\ref{LEM0}.
Let $p_\eps=\eps^{-\sigma}$ with $\sigma=b-b_0$. By Lemma~\ref{LEM0}
$$
\mu(A_\eps \cap \{\tau_{A_\eps}\le p_\eps\}) \le \sum_{n=\lfloor a\log1/\eps\rfloor+1}^{p_\eps}\mu(A_\eps \cap T^{-n}A_\eps).
$$
By Lemma~\ref{epsb} (with $c=1$) this sum is bounded by
\[
p_\eps \eps^{b}\mu B(x_0,2\eps)=o(\mu(B(x_0,\eps)))
\]
by assumption. Hence assumption \eqref{a} of Proposition~\ref{appli1} holds by Lemma~\ref{extremal}. (ii) comes from Lemma \ref{lem:couronne} and (iv) follows as in the proof of Theorem~\ref{ThmgeneAppli1}.
\end{proof}

\begin{lemma}\label{epsb}
For any $a,b,c>0$ such that  $a\log\lambda+b/d_u+c<1$, for $\varepsilon$ small enough, for any $n\ge a\log1/\varepsilon$ we have
\[
\mu( A_\varepsilon \cap T^{-n}A_\varepsilon ) \le \eps^b \mu(B(x_0,\varepsilon+\varepsilon^c)).
\]
\end{lemma}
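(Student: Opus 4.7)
The plan is to control $\mu(A_\eps\cap T^{-n}A_\eps)$ via the inclusion $A_\eps\cap T^{-n}A_\eps\subset B(x_0,\eps)\cap T^{-n}B(x_0,\eps)$, and then estimate the latter by combining the local hyperbolic structure at $x_0$ with the SRB disintegration of $\mu$. First I would reduce to the case where $x_0$ is a fixed point, by replacing $T$ with $T^p$. The core geometric step is the inclusion: any $y$ with $y,T^ny\in B(x_0,\eps)$ must lie in a thin tube around the local stable manifold $W^s_{loc}(x_0)$. Writing $y$ via its stable-holonomy projection $y^u\in W^u_{loc}(x_0)$, the stable contraction of $T$ gives $d(T^ny,T^ny^u)\to 0$ exponentially in $n$, so $d(T^ny^u,x_0)\le(1+o(1))\eps$; combined with the unstable expansion $\|DT^n|_{E^u}\|\ge\lambda^{-n}$, this yields $d(y^u,x_0)\le C\eps\lambda^n$. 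Up to the bi-Lipschitz constants between local product coordinates and the Riemannian metric (absorbed by the enlargement $\eps^c$),
\[
B(x_0,\eps)\cap T^{-n}B(x_0,\eps)\subset \{y\in B(x_0,\eps+\eps^c):\dist(y,W^s_{loc}(x_0))\le C\eps\lambda^n\}.
\]

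Next I would estimate the measure of this tube using the SRB disintegration $\mu=\int\mu_W\,d\hat\mu(W)$ along unstable leaves, the conditionals $\mu_W$ having uniformly bounded density with respect to the $d_u$-dimensional Riemannian volume on $W$. On each unstable leaf, the intersection with the tube sits in a $d_u$-ball of radius $\le C\eps\lambda^n$, so its $\mu_W$-mass is $\le C'(\eps\lambda^n)^{d_u}$; each unstable leaf crossing $B(x_0,\eps+\eps^c)$ contributes $\mu_W$-mass of order $(\eps+\eps^c)^{d_u}$. Taking the ratio leafwise and integrating against the transversal $\hat\mu$ then yields
\[
\mu\bigl(B(x_0,\eps)\cap T^{-n}B(x_0,\eps)\bigr)\le C\Bigl(\frac{\eps\lambda^n}{\eps+\eps^c}\Bigr)^{d_u}\mu\bigl(B(x_0,\eps+\eps^c)\bigr).
\]

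Finally, for $n\ge a\log(1/\eps)$ I have $\lambda^n\le\eps^{-a\log\lambda}$, and $\eps+\eps^c\ge\eps^{\min(1,c)}$, so the prefactor above is bounded by $\eps^{d_u(1-c-a\log\lambda)}$ when $c\le 1$ (and by a stronger quantity when $c\ge 1$). The hypothesis $a\log\lambda+b/d_u+c<1$ rewrites as $d_u(1-c-a\log\lambda)>b$, so the prefactor is $\le\eps^b$ for $\eps$ sufficiently small, absorbing constants via the strict inequality. The hard part will be the geometric inclusion: I must control the constant $C$ in the tube width $C\eps\lambda^n$ uniformly in $n$, for $n$ possibly as large as a polynomial in $1/\eps$. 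This forces reliance on the invariant stable/unstable manifolds and their holonomies (well-controlled by the $C^{1+\alpha}$ hyperbolic theory) rather than on a direct linearization of $T^n$, whose nonlinear error would accumulate beyond control.
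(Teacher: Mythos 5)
Your plan has a genuine gap at the ``core geometric step.'' You claim that every $y\in B(x_0,\eps)\cap T^{-n}B(x_0,\eps)$ lies in a tube of width $C\eps\lambda^n$ around $W^s_{\mathrm{loc}}(x_0)$, arguing: $d(T^ny^u,x_0)\le C\eps$, hence by unstable expansion $d(y^u,x_0)\le C\eps\lambda^n$. That step silently replaces the ambient distance $d(T^ny^u,x_0)$ by the intrinsic arc-length distance along $W^u(x_0)$. The contraction estimate $d_{W^u}(y^u,x_0)\le C\lambda^n\,d_{W^u}(T^ny^u,x_0)$ is fine, but $d_{W^u}(T^ny^u,x_0)$ need not be $O(\eps)$: for $n$ large the point $T^ny^u$ has been pushed far along the (dense) global unstable leaf $W^u(x_0)$ and only returns to $B(x_0,\eps)$ because that leaf recurs. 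Concretely, pick $z\in W^u(x_0)\cap B(x_0,\eps/2)$ at arc-length $L\approx\eps\lambda^{-n}/2$ from $x_0$ (such $z$ exist once $L\gg 1/\mu(B(x_0,\eps))$, which happens well inside the range $n\le p_\eps=\eps^{-\sigma}$ the lemma must cover), and set $y=T^{-n}z$. Then $y\in W^u_{\mathrm{loc}}(x_0)$ with $d(y,x_0)\approx\eps/2$, $T^ny=z\in B(x_0,\eps)$, yet $\dist(y,W^s_{\mathrm{loc}}(x_0))\approx\eps/2\gg C\eps\lambda^n$. So the tube inclusion is simply false for large $n$, and correspondingly $W\cap T^{-n}B(x_0,\eps)$ on a fixed-scale unstable leaf $W$ is a union of many small components rather than a single ball of radius $C\eps\lambda^n$; your leafwise bound does not account for their number.

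The paper sidesteps the whole issue by making the disintegration $n$-dependent: start from a cover $\mathcal W$ by unstable pieces of intermediate size $\eps^{1-\kappa}$ and disintegrate along $\mathcal V=T^{-n}\mathcal W$. Each $V\in\mathcal V$ has $\diam V\le C\lambda^n\eps^{1-\kappa}\le\eps^c$, so only $V$'s inside $B(x_0,\eps+\eps^c)$ contribute, and $\mu(T^{-n}B(x_0,\eps)\mid V)\le C\eps^{\kappa d_u}$ because $W=T^nV$ is a single smooth unstable piece of size $\eps^{1-\kappa}$ meeting $B(x_0,\eps)$ in one ball of radius $\le C\eps$, transported back by $T^n$ with bounded distortion. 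This uses no stable foliation, no holonomy, and no hyperbolicity of $x_0$; the parameter $\kappa$ is chosen in $(b/d_u,\,1-c+a\log(1/\lambda))$, which is exactly your exponent bookkeeping. So the final arithmetic in your plan is right, but the mechanism producing the $(\eps\lambda^n)^{d_u}$-type bound needs to be the $n$-adapted unstable partition, not a fixed tube around $W^s_{\mathrm{loc}}(x_0)$.
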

\begin{proof}
Let $\kappa>0$ small and consider a partition (or a cover with finite multiplicity) of $\Omega$ 
by pieces of unstable cubes (or balls) $W\in \mathcal{W}$ of size $\varepsilon^{1-\kappa}$.
Let $\mathcal{V}$ be the set of the $V=T^{-n}W$, $W\in \mathcal{W}$. 

We can disintegrate the measure with respect to $\mathcal{V}$ such that for any set $Z$
\begin{equation}\label{disintegrate}
\mu(Z ) = \int_\mathcal{V} \mu(Z|V)d\mu(V).
\end{equation}
Since each $W=T^nV$ is a small piece of a smooth manifold, its intersection with $B(x_0,\varepsilon)$ consists at most in an unstable ball of radius $C \varepsilon$. Therefore the proportion of the unstable volume of $W\cap B(x_0,\varepsilon)$ in $W$ is bounded by $C \varepsilon^{\kappa d_u}$, where $d_u$ is the unstable dimension. By distortion, we get that 
$\mu(T^{-n}B(x_0,\varepsilon)|V) \le C \varepsilon^{\kappa d_u}$.

Using \eqref{disintegrate} we get
\[
\begin{aligned}\mu(B(x_0,\varepsilon)\cap T^{-n}B(x_0,\varepsilon))
&= 
\int_{\mathcal{V}} \mu(B(x_0,\varepsilon)\cap T^{-n}B(x_0,\varepsilon)|V) d\mu(V)\\
&\le 
\int_{\mathcal{V}} \mu(T^{-n}B(x_0,\varepsilon)|V) 1_{V\cap B(x_0,\varepsilon)\neq\emptyset}d\mu(V) \\
&\le  C \varepsilon^{\kappa d_u} \mu(B(x_0,\varepsilon+\varepsilon^c)),
\end{aligned}
\]
since $\diam V\le C\lambda^n \varepsilon^{1-\kappa} \le \varepsilon^c$.
\end{proof}

\begin{lemma}\label{7/8} There exists a constant $\delta_0>0$ independent of $\eps$ such that
\[
\mu(B(x_0,\frac78\eps)) \le (1-\delta_0) \mu(B(x_0,\eps)).
\]
\end{lemma}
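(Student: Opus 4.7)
The plan is to exploit the local SRB structure of $\mu$ near the hyperbolic periodic point $x_0$ together with a short geometric computation on unstable leaves. The doubling-type assumption from the theorem statement is not needed for this lemma; only $d_u\ge 1$ and the $C^2$ Anosov plus SRB hypothesis are used.

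First, I would work in a local chart around $x_0$ adapted to the hyperbolic splitting $T_{x_0}\Omega=E^u\oplus E^s$, with coordinates $(u,s)\in\RR^{d_u}\times\RR^{d_s}$ in which the Riemannian metric at the origin is Euclidean. In such a chart the local unstable manifolds can be written as graphs $u\mapsto(u,\psi_{s_0}(u))$ parameterized by the transverse coordinate $s_0=\psi_{s_0}(0)$, and the Riemannian ball $B(x_0,r)$ agrees with the Euclidean ball $\{|u|^2+|s|^2\le r^2\}$ up to a multiplicative correction $1+O(r)$ on the radius. The $C^2$ Anosov plus SRB assumption provides a Rokhlin disintegration $\mu=\int \mu^u_{s_0}\,d\bar\nu(s_0)$, where each $\mu^u_{s_0}$ is carried by its leaf with a continuous and strictly positive conditional density near the origin.

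Second, I would carry out the leafwise estimate. For $|s_0|\le 7\eps/8$ the slice $B(x_0,r)\cap W^u_{\mathrm{loc}}(s_0)$ is (up to a $1+O(\eps)$ distortion) a $d_u$-dimensional Euclidean ball of radius $\sqrt{r^2-|s_0|^2}$, so by continuity of the conditional density near the origin,
$$\frac{\mu^u_{s_0}(B(x_0,7\eps/8))}{\mu^u_{s_0}(B(x_0,\eps))} = (1+O(\eps))\left(\frac{(7\eps/8)^2-|s_0|^2}{\eps^2-|s_0|^2}\right)^{d_u/2}.$$
The identity $\tfrac{49}{64}(\eps^2-|s_0|^2)-((7\eps/8)^2-|s_0|^2)=\tfrac{15}{64}|s_0|^2\ge 0$ implies $(7\eps/8)^2-|s_0|^2\le \tfrac{49}{64}(\eps^2-|s_0|^2)$, and combined with $d_u\ge 1$ (so that $(49/64)^{d_u/2}\le 7/8$) this yields
$$\mu^u_{s_0}(B(x_0,7\eps/8))\le \tfrac{7}{8}(1+O(\eps))\,\mu^u_{s_0}(B(x_0,\eps))$$
for all $|s_0|\le 7\eps/8$; for $|s_0|>7\eps/8$ the left-hand side is already $0$.

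Finally, integrating this leafwise bound against $d\bar\nu(s_0)$ and choosing $\eps$ small enough to absorb the $1+O(\eps)$ factor yields, say, $\mu(B(x_0,7\eps/8))\le \tfrac{15}{16}\mu(B(x_0,\eps))$, so $\delta_0=1/16$ works for all sufficiently small $\eps$. The main obstacle is the first paragraph: producing a local disintegration in which the conditional density is continuous at the origin. This goes through for $C^2$ Anosov systems via the absolute continuity of the stable holonomy with continuous Jacobian (classical Anosov theory), and this is where I would focus the technical effort; the remainder is a routine Fubini-style computation.
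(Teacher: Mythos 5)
Your argument is correct and rests on the same key structural input as the paper's: the Rokhlin disintegration of the SRB measure along unstable leaves, together with continuity (hence uniform boundedness above and below) of the conditional densities near $x_0$. The difference is in the leaf-wise estimate. The paper avoids any explicit volume computation: for each leaf $V$ meeting $B(x_0,\tfrac78\eps)$ it picks a point $x\in V\cap S(x_0,\tfrac{15}{16}\eps)$ and observes that the ball $B(x,\tfrac{\eps}{16})\cap V$ is contained in the annulus $B(x_0,\eps)\setminus B(x_0,\tfrac78\eps)$; since $\mu(\cdot\mid V)$ is comparable to the $d_u$-dimensional Lebesgue measure and $B(x_0,\eps)\cap V$ has diameter at most $2\eps$, this ball carries a definite fraction $\delta_0$ of the conditional measure of the whole slice, and integration finishes the proof. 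Your route instead computes the ratio of the two slice-volumes in a local chart and uses the algebraic identity $(\tfrac{7}{8}\eps)^2-|s_0|^2\le\tfrac{49}{64}(\eps^2-|s_0|^2)$ plus $d_u\ge1$. This buys an explicit constant, but it commits you to controlling the approximation of the slice by a Euclidean ball of radius $\sqrt{r^2-|s_0|^2}$, which involves the only-H\"older dependence of the leaf $u\mapsto\psi_{s_0}(u)$ on the transverse parameter $s_0$. The estimate does survive, because for every $|s_0|<\tfrac78\eps$ the denominator $\eps^2-|s_0|^2\ge\tfrac{15}{64}\eps^2$ is of full order $\eps^2$, so an additive error $O(\eps^{2+\gamma})$ in the squared radii only perturbs the ratio by $O(\eps^\gamma)$; but it would be worth making this point explicit, since the naive ``$1+O(\eps)$ multiplicative'' phrasing hides exactly the regime $|s_0|\to\tfrac78\eps$ where the inner slice degenerates. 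The paper's ``ball inside the annulus'' trick side-steps this issue entirely and is the more robust of the two.
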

\begin{proof}
We fix a measurable partition $\V$ of unstable manifolds such that the $d_u$-dimensional Lebesgue measure of $V$ is bounded from below by a constant, and $x_0\not\in\overline{\cup_V\partial V}$,
such that the disintegration \eqref{disintegrate} holds true.

Let $\rho=\frac78$.
Suppose that $V\in \mathcal{V}$ intersects the ball $B(x_0,\rho\eps)$.
Then $V$ intersects also the sphere $S(x_0,\frac{\rho+1}2\eps)$ in a point say $x$, and
the ball $B(x,\frac{1-\rho}2\eps)\cap V$ is contained in $B(x_0,\eps)\setminus B(x_0,\rho\eps)$.
Since $\mu(\cdot|V)$ is equivalent to the $d_u$-dimensional Lebesgue measure, there exists $\delta_0>0$ such that 
\[
\mu( B(x_0,\eps)\setminus B(x_0,\rho\eps)|V)
\ge 
\mu( B(x,\frac{1-\rho}2\eps)| V) \ge \delta_0 \mu(B(x_0,\eps)|V)
\]
and the lemma follows by integration.
\end{proof}

\begin{lemma}\label{extremal}
The extremal index is bounded away from zero: $\theta_\eps\ge\frac{\delta_0}{q_0+1}>0$.
\end{lemma}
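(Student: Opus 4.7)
The plan is to combine three ingredients: a pairwise disjointness observation among preimages of $A_\eps$ under iterates of $T^p$, a geometric containment of the annulus $B(x_0,\eps)\setminus B(x_0,\tfrac{7}{8}\eps)$ inside these preimages, and the annular mass lower bound supplied by Lemma~\ref{7/8}.

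First I would show that the sets $T^{-jp}A_\eps$, $j=0,1,\ldots,q_0$, are pairwise disjoint. Indeed, if $y$ lies in $T^{-jp}A_\eps\cap T^{-kp}A_\eps$ with $0\le j<k\le q_0$, then $z:=T^{jp}y\in A_\eps$ and simultaneously $T^{(k-j)p}z=T^{kp}y\in A_\eps\subset B(x_0,\eps)$; since $1\le k-j\le q_0$, the defining property of $A_\eps$ forces $T^{(k-j)p}z\notin B(x_0,\eps)$, a contradiction. By $T$-invariance of $\mu$, this gives
\[
(q_0+1)\mu(A_\eps)\;=\;\mu\!\left(\bigsqcup_{j=0}^{q_0} T^{-jp}A_\eps\right).
\]

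Next I would argue that, up to a $\mu$-null set,
\[
B(x_0,\eps)\setminus B(x_0,\tfrac{7}{8}\eps)\subset\bigcup_{j=0}^{q_0} T^{-jp}A_\eps.
\]
For $y$ in this annulus I would consider the largest index $j^{*}\in\{0,\ldots,q_0\}$ for which $T^{j^{*}p}y\in B(x_0,\eps)$ and check that $T^{j^{*}p}y$ actually lies in $A_\eps$, i.e.\ that the orbit of $y$ under $T^{p}$ does not return to $B(x_0,\eps)$ during the next $q_0$ iterates of $T^{p}$. The hyperbolicity of $T^{p}$ at $x_0$ is what forces such an escape: linearizing near $x_0$, a point whose unstable component has modulus $\gtrsim\eps$ sees it expanded past $\eps$ within a bounded number of $T^{p}$-iterations, and one calibrates $q_0$ (consistently with Lemma~\ref{LEM0}) so that a window of length $q_0\cdot p$ suffices for any annular point with a non-degenerate unstable component.

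Finally, Lemma~\ref{7/8} gives $\mu(B(x_0,\eps)\setminus B(x_0,\tfrac{7}{8}\eps))\ge \delta_0\mu(B(x_0,\eps))$, so combining with the two observations above,
\[
(q_0+1)\mu(A_\eps)\;=\;\mu\!\left(\bigsqcup_{j=0}^{q_0}T^{-jp}A_\eps\right)\;\ge\;\mu(B(x_0,\eps)\setminus B(x_0,\tfrac{7}{8}\eps))\;\ge\;\delta_0\,\mu(B(x_0,\eps)),
\]
from which $\theta_\eps\ge\delta_0/(q_0+1)$ follows by dividing through.

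The main obstacle is the containment step. The delicate case consists of points close to the local stable manifold $W^{s}_{\mathrm{loc}}(x_0)$: these points have a very small unstable component and their $T^{p}$-orbit may linger in $B(x_0,\eps)$ beyond time $q_0p$. To control them rigorously one exploits the SRB disintegration along unstable manifolds (the same tool driving Lemma~\ref{7/8}): the ``persistent returners'' form a slice of unstable width $\lesssim\lambda^{-q_0}\eps$ on each unstable leaf and therefore occupy only a $\lambda^{-q_0 d_u}$ fraction of $\mu(B(x_0,\eps))$, which is absorbed into the choice of $q_0$ so as to preserve the explicit constant $\delta_0/(q_0+1)$.
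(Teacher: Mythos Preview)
Your overall plan (find a set of mass $\ge\delta_0\mu(B(x_0,\eps))$ inside $\bigcup_{j=0}^{q_0}T^{-jp}A_\eps$ and divide by $q_0+1$) matches the paper's, and your disjointness observation is correct---though the paper does not use it, relying only on the union bound. The genuine gap is the containment
\[
B(x_0,\eps)\setminus B(x_0,\tfrac78\eps)\ \subset\ \bigcup_{j=0}^{q_0} T^{-jp}A_\eps\, .
\]
This is false, not merely ``up to a null set'': any $y$ on the local stable manifold with $|y-x_0|\approx\tfrac78\eps$ has $T^{jp}y\to x_0$, hence $T^{jp}y\in B(x_0,\eps)$ for all $j\ge0$ and never enters $A_\eps$. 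Your SRB fix does bound the measure of such persistent returners by $O(\lambda^{-q_0 d_u})\mu(B(x_0,\eps))$, which yields \emph{some} positive lower bound on $\theta_\eps$, but not $\delta_0/(q_0+1)$; the suggestion that the error is ``absorbed into the choice of $q_0$ so as to preserve the explicit constant $\delta_0/(q_0+1)$'' is self-defeating, since enlarging $q_0$ shrinks the target constant too.

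The paper avoids the stable-manifold obstacle by replacing the annulus with
\[
B\cap\{\tau_B\circ T^{pq_0}>pq_0\}\ =\ \{x\in B:\ T^{(q_0+1)p}x,\dots,T^{2q_0p}x\notin B\},\qquad B:=B(x_0,\eps).
\]
For this set your ``largest $j^*$'' argument becomes airtight with no appeal to hyperbolicity: if $j^*\in\{0,\dots,q_0\}$ is maximal with $T^{j^*p}x\in B$, then $T^{(j^*+i)p}x\notin B$ for $1\le i\le q_0$ either by maximality (when $j^*+i\le q_0$) or by the non-return hypothesis (when $q_0<j^*+i\le 2q_0$), so $T^{j^*p}x\in A_\eps$. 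The lower bound on its mass comes from the inclusion~\eqref{Tq}, established inside the proof of Lemma~\ref{LEM0}, which gives $B\cap\{\tau_B\circ T^{pq_0}\le pq_0\}\subset T^{-qp}B(x_0,\tfrac78\eps)$; Lemma~\ref{7/8} then yields mass $\ge\delta_0\mu(B)$ for the complement in $B$, and the stated bound $\theta_\eps\ge\delta_0/(q_0+1)$ follows with no error term.
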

\begin{proof}
We first observe, using equation~\eqref{Tq} that
\[
B(x_0,\eps)\cap \{\tau_{B(x_0,\eps)} \circ T^{q_0}\le q_0\} \subset T^{-q}B(x_0,\frac78\eps).
\]
for some integer $q$. Denote for simplicity $B=B(x_0,\eps)$.
By Lemma \ref{7/8} this gives
\[
\mu(B\cap \{\tau_{B}\circ T^{q_0}> q_0\}) \ge \delta_0 \mu(B).
\]
Recall that 
\[
\mu(A_\eps)= \theta_\eps \mu(B).
\]
We have
\[
B \cap  \{\tau_{B}\circ T^{q_0}> q_0\} \subset \bigcup_{j=0}^{q_0}T^{-j}A_\eps
\]
hence
\[
\mu(B \cap  \{\tau_{B}\circ T^{q_0}> q_0\})\le (q_0+1)\theta_\eps \mu(B).
\]
This implies that $\delta_0\le (q_0+1)\theta_\eps$.
\end{proof}

\begin{lemma}\label{lem:couronne}
Suppose that $T$ is $C^{1+\alpha}$ and that $\alpha>1-1/\overline{d}_{\mu}(x_0)$.
Then
\[
\mu(B(x_0,\eps)\setminus B(x_0,\eps-\eps^\delta) = o( \mu(B(x_0,\eps) ))
\]
 for any $\delta$ such that $\overline{d}_\mu(x_0)<\frac{\delta+1}{2}<\frac1{1-\alpha}$.
\end{lemma}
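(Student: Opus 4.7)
The goal is to bound the $\mu$-measure of the spherical annulus by $o(\mu(B(x_0,\eps)))$. The plan is to combine the quasi-product structure of the SRB measure (absolute continuity of its conditional measures along unstable manifolds) with the dimension lower bound provided by $\overline{d}_\mu(x_0)<(\delta+1)/2$: namely, $\mu(B(x_0,\eps))\ge \eps^{(\delta+1)/2}$ for all $\eps$ small enough.

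First, I would disintegrate $\mu$ along a measurable partition $\V$ into local pieces of unstable manifolds of small fixed diameter, as is done in Lemma~\ref{epsb}. By the SRB property combined with the $C^{1+\alpha}$ regularity of $T$, each conditional measure $\mu_V$ is equivalent to the induced Riemannian volume on $V$, with density uniformly bounded above and below. For any $V$ meeting $B(x_0,\eps)$, the slice
\[
V\cap\bigl(B(x_0,\eps)\setminus B(x_0,\eps-\eps^\delta)\bigr)
\]
is contained in an $\eps^\delta$-tubular neighborhood, inside $V$, of the smooth $(d_u-1)$-dimensional submanifold $V\cap S(x_0,\eps)$. A direct volume computation bounds its Riemannian $d_u$-volume, and hence its $\mu_V$-mass, by $C\eps^{d_u-1+\delta}$.

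Integrating over $\V$, the transverse mass of the plaques meeting $B(x_0,\eps)$ is controlled by $\mu(B(x_0,C\eps))$, which thanks to the standing assumption $\mu(B(x_0,2\eps))\eps^{b_0}=o(\mu(B(x_0,\eps)))$ is comparable to $\mu(B(x_0,\eps))$ up to a subpolynomial factor. This reduces the statement to showing that the extra factor $\eps^{\delta}$ coming from the tubular-neighborhood estimate beats the lower bound $\eps^{(\delta+1)/2}$ on $\mu(B(x_0,\eps))$ with a polynomial margin.

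The main obstacle is that such a direct bound is not always sufficient: one must exploit the $C^{1+\alpha}$ regularity more carefully, using that the unstable holonomies are $\alpha$-H\"older, so that the slices deform in a controlled way as $V$ varies. The precise numerology $(\delta+1)/2<1/(1-\alpha)$ is exactly what makes an iteration argument work: after pushing the annulus under an appropriately chosen number of iterates of $T$, the accumulated $C^{1+\alpha}$ nonlinearity errors remain of strictly smaller order than $\eps^\delta$, so that the linearized picture takes over and the improved estimate on the iterated annulus, combined with the $T$-invariance of $\mu$, closes the gap. Balancing the H\"older error coming from non-linearity against the annulus width $\eps^\delta$ is the technical heart of the proof.
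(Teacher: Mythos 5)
Your proposal identifies the right starting point (disintegration along the unstable foliation, as in Lemma~\ref{epsb}), but the key geometric estimate is incorrect, and your fallback idea does not match the argument that is actually needed.

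The claim that $V\cap\bigl(B(x_0,\eps)\setminus B(x_0,\eps-\eps^\delta)\bigr)$ lies in an $\eps^\delta$-tubular neighborhood inside $V$ of $V\cap S(x_0,\eps)$, giving $\mu_V$-mass $O(\eps^{d_u-1+\delta})$, fails precisely in the case that matters: when the unstable leaf $V$ is nearly tangent to the sphere $S(x_0,\eps)$. There the leaf can stay within Euclidean distance $\eps^\delta$ of the sphere over a segment of length of order $\sqrt{\eps\cdot\eps^\delta}=\eps^{(1+\delta)/2}$, which since $\delta>1$ is \emph{much larger} than $\eps^\delta$; moreover $V\cap S(x_0,\eps)$ may be empty or degenerate, so the tubular-neighborhood picture collapses. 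The correct per-leaf bound is $O(\eps^{(1+\delta)/2})$, not $O(\eps^\delta)$, and the hypothesis $\overline d_\mu(x_0)<\tfrac{\delta+1}{2}$ is exactly what makes $\eps^{(1+\delta)/2}=o(\mu(B(x_0,\eps)))$. The paper obtains this bound by a direct local computation: writing $V$ as a $C^{1+\alpha}$ graph $\varphi$ over $\RR^{d_u}$ near a point of the annulus, subtracting the two sphere inequalities, and solving radially; the derivative estimate $|g_u'(t)|=O(\eps|t|^{\alpha-1})=o(1)$, which uses $\frac{\delta+1}{2}<\frac{1}{1-\alpha}$, shows the admissible radial parameter $t$ lies in an interval of length $O(\eps^{(1+\delta)/2})$, and one concludes by radial integration and then by integrating \eqref{disintegrate}.

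Your suggested repair via pushing the annulus by iterates of $T$ and invoking H\"older regularity of holonomies is a genuinely different idea, not what the paper does, and you give no indication of how the iteration would close; as written it is a gap, not a proof. Separately, the intermediate remark that the transverse mass of plaques meeting $B(x_0,\eps)$ is ``controlled by $\mu(B(x_0,C\eps))$'' is both unjustified (the plaques have fixed size independent of $\eps$) and unnecessary: the disintegration identity $\int_{\V}\mu(\cdot\mid V)\,d\tilde\mu(V)=\mu(\cdot)$ already suffices once the per-leaf bound is established.
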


\begin{proof}
We fix a measurable partition $\V$ of unstable manifolds such that the $d_u$-dimensional Lebesgue measure of $V$ is bounded from below by a constant, and $x_0\not\in\overline{\cup_V\partial V}$,
such that the disintegration \eqref{disintegrate} holds true.

Up to applying the exponential map at $x_0$ we assume that $B(x_0,\eps_0)$ is the ball $B(0,\eps_0)$ of $\RR^d$.

Let $\eps\in(0,\eps_0)$.
Let $V\in \V$. Let $y\in V\cap B(0,\eps)\setminus B(0,\eps-\eps^\delta)$.
Up to a rotation we suppose that locally $V$ is the graph of a $C^{1+\alpha}$ 
map $\varphi$ from $U\subset\RR^{d_u}$ to $\RR^{d-d_u}$, with $d_x\varphi=0$ where $x\in U$ is such that $y=(x,\varphi(x))$.
We have 
\begin{equation}\label{Beps}
(\eps-\eps^\delta)^2 \le |x|^2 + |\varphi(x)|^2 \le \eps^2.
\end{equation}
Let $h\in \RR^{d_u}$ such that $x+h\in U$ and $(x+h,\varphi(x+h))\in B(0,\eps)\setminus B(0,\eps-\eps^\delta)$. We have
\begin{equation}\label{Beps+h}
(\eps-\eps^\delta)^2 \le |x+h|^2 + |\varphi(x+h)|^2 \le \eps^2.
\end{equation}
Subtracting \eqref{Beps} to \eqref{Beps+h} we get, setting $a(h) = |\varphi(x+h)|^2-|\varphi(x)|^2$, since $\eps^{2\delta}\le \eps^{1+\delta}$,
\[
3\eps^{1+\delta}
\ge 
\left||x+h|^2 -|x|^2 + a(h)\right|
=
\left| |h|^2 + 2 x\cdot h+ a(h)\right|.
\]
Assume that $|h|>6\eps^\frac{1+\delta}2$. 
We fix a unit vector $u\in \RR^{d_u}$ and seek for the solutions $h=tu$ of the above equation, which becomes
\[
 \left|  t^2 + 2 x\cdot u t +a(tu) \right| \le 3 \eps^{1+\delta}.
\]
Therefore, dividing by $|t|>6\eps^\frac{1+\delta}2$ we end up with
\[
 \left|  t + 2 x\cdot u  +\frac{a(tu)}t \right| \le \frac12 \eps^\frac{1+\delta}2.
\]
Let $t,t'$ be two solutions. We obtain by subtraction
\[
 \left|  t'-t  +\frac{a(t'u)}{t'} - \frac{a(tu)}t \right| \le \eps^\frac{1+\delta}2.
\]
Note that the function $g_u(t):=\frac{a(tu)}t$ is $C^1$ in the range of $t$'s, and
\[
g_u'(t) = \frac{1}{t}(2 (d_{x+tu}\varphi  u)\cdot\varphi(x+tu)) - \frac{a(tu)}{t^2}.
\]
Since $\varphi$ is $C^{1+\alpha}$ we have $|d_{x+tu}\varphi u|= O( |t|^\alpha)$. In addition, 
\[
a(tu)
=(|\varphi(x+tu)|-|\varphi(x)|)(|\varphi(x+tu)|+|\varphi(x)|)
=O(\eps |t|^{1+\alpha}).
\]
This implies that $|g_u'(t)| = O(\eps |t|^{\alpha-1}) = O(\eps^{1+(\alpha-1)\frac{1+\delta}2})=o(1)$, hence
\[
 \left|  (t-t') (1+o(1)) \right| \le \eps^\frac{1+\delta}2.
\]
Using radial integration this gives that the $d_u$-dimensional Lebesgue measure of those $h$ such that $(x+h,\varphi(x+h))\in B(0,\eps)\setminus B(0,\eps-\eps^\delta)$ is $O(\eps^\frac{1+\delta}2)$.
Hence $\mu(B(x_0,\eps)\setminus B(x_0,\eps-\eps^\delta)|V)= O(\eps^\frac{1+\delta}2)$, and the result follows by integration using \eqref{disintegrate}.
\end{proof}

\section{Billiard in a diamond}\label{sec:bd}
We consider a diamond shaped billiard, with no cusp.
The billiard table $Q$ is a bounded closed part of $\mathbb R^2$ delimited by $4$ convex obstacles $(\Gamma_i)_{i\in\mathbb Z/4\mathbb Z}$ (with $C^3$-smooth boundary, with positive curvature) placed in such a way that, for every $i\in\mathbb Z/4\mathbb Z$, $\partial\Gamma_i$ meets $\partial \Gamma_{i+1}$ transversely at some point called corner $C_i$, but has no common point with $\partial\Gamma_{j}$ for $j\ne i-1,i+1$. In our representation of this billiard table, $C_1=(0,0)$ is on the left side of $Q$ and
the inner bisector at the corner $C_1$ is horizontal.
We consider again the billiard flow $(\mathcal M,\nu,(Y_t)_t)$
and the billiard map $(\Omega,\mu,T)$ in the domain $Q$.

For any $\varepsilon>0$, we put a virtual vertical barrier $I_\varepsilon$ of length $\varepsilon$ joining a point $a^{(1)}_\varepsilon\in\partial\Gamma_1$ to a point $a^{(2)}_\varepsilon\in\partial
   \Gamma_2$.
and we are interested in the times at which the billiard flow enters the
corner by crossing the barrier $I_\varepsilon$. So that we define
$$\mathcal A_\varepsilon:=I_\varepsilon\times S^1_-,\quad\mbox{with}\quad 
S^1_-:=\{\vec v=(v_1,v_2)\in S^1\ :\ v_1<0\} \, .$$
We take $V:=\mathbb R\times S^1_-$ and
$$\mathcal H_\varepsilon:(q=(q_1,q_2),\vec v)\mapsto \left(\frac{q_2}{\varepsilon},\vec v\right) .$$

\includegraphics[scale=0.5]{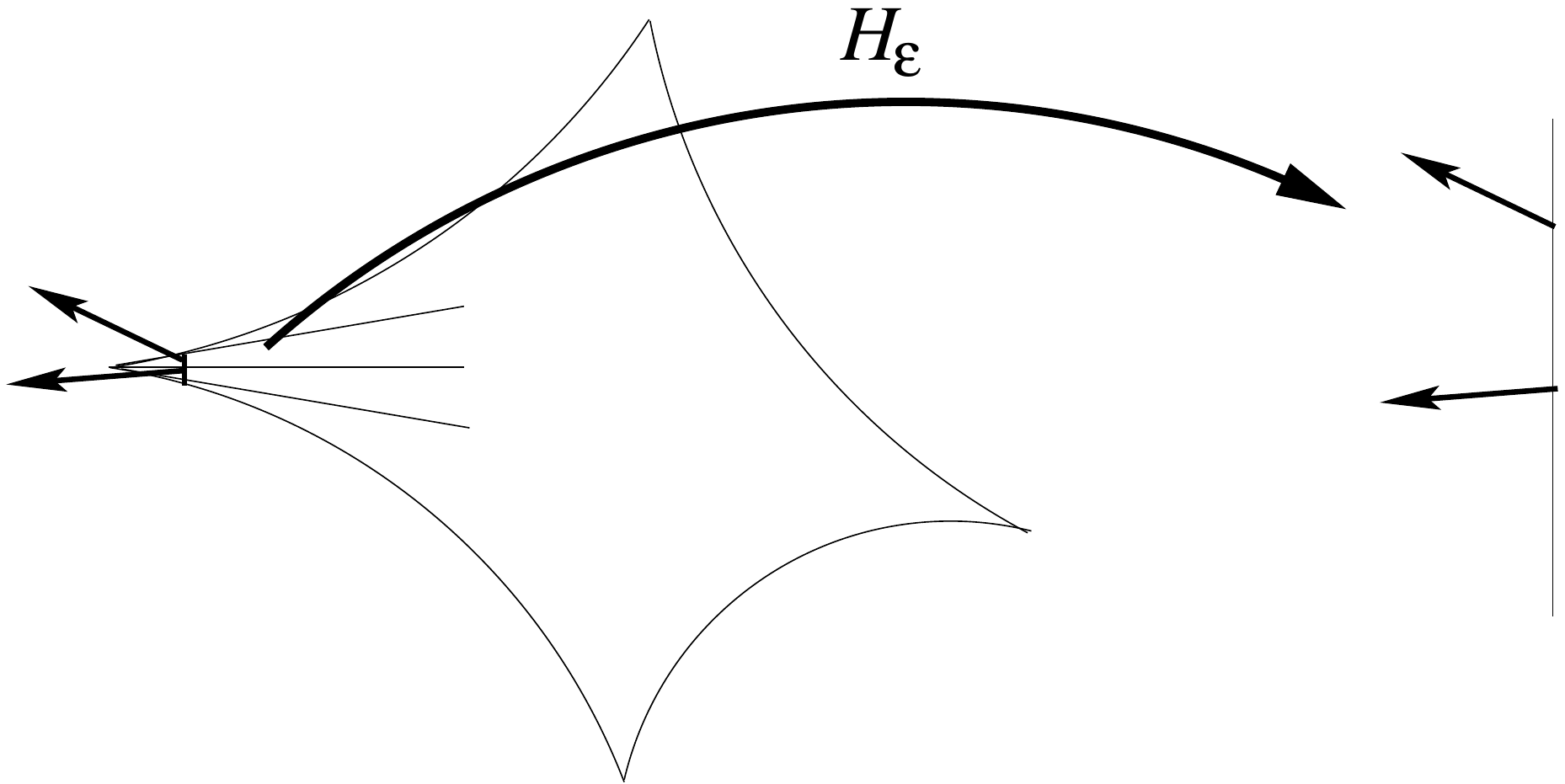}

\begin{theorem}\label{billardDiamant}
The point process
$$\sum_{t>0\, :\, Y_t\in \mathcal A_\varepsilon}\delta_{\left(\frac{\eps t}{\pi\, Area(Q)},\mathcal H_\varepsilon(Y_t(\cdot))\right)} $$
converges in distribution to a Poisson point process on $[-1/2,1/2]\times S^1_-$ with density $\lambda\times m_0$, with $m_0$
the probability measure of density proportional to $(\bar q,\vec v)\mapsto|v_1|$ with $\vec v=(v_1,v_2)$.
\end{theorem}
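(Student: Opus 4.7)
The plan is to reduce the continuous-time statement to the discrete-time billiard map via Theorem~\ref{THMflow} and then apply Proposition~\ref{appli1} to the map. For the reduction, set $A_\varepsilon:=\Pi\mathcal A_\varepsilon$, which consists of the reflected vectors $(q,\vec v)\in\Omega$ whose outgoing trajectory crosses the barrier $I_\varepsilon$ (in the direction $v_1<0$) strictly before the next reflection, and define $H_\varepsilon\colon A_\varepsilon\to V=\mathbb{R}\times S^1_-$ by $H_\varepsilon(q,\vec v):=(y_\varepsilon(q,\vec v)/\varepsilon,\vec v)$, where $y_\varepsilon(q,\vec v)$ is the $q_2$-coordinate of the intersection of the ray through $(q,\vec v)$ with $I_\varepsilon$. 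With this choice, every crossing of $\mathcal A_\varepsilon$ by the flow corresponds to exactly one visit of the discrete orbit to $A_\varepsilon$, and the spatial marker $\mathcal H_\varepsilon(Y_t(\cdot))$ coincides with $H_\varepsilon(T^n(\cdot))$ at the associated reflection time.

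A Santal\'o-type calculation, using the invariant density $\cos\varphi/(2|\partial Q|)$ on $\Omega$, gives $\mu(A_\varepsilon)=\varepsilon/|\partial Q|+o(\varepsilon)$ and $\bar\tau=\pi\,\mathrm{Area}(Q)/|\partial Q|$, so $\mu(A_\varepsilon)/\bar\tau=\varepsilon/(\pi\,\mathrm{Area}(Q))$, matching the time normalisation in the statement. The conditional distribution $\mu(H_\varepsilon^{-1}(\cdot)\mid A_\varepsilon)$ is identified by a change of variables: parameterising $A_\varepsilon$ by the point $(y,\vec v)\in I_\varepsilon\times S^1_-$ at which the ray through $(q,\vec v)$ meets $I_\varepsilon$, the invariant density pushes forward to one proportional to $|v_1|\,dy\,d\vec v$, so after normalisation the image measure converges, as $\varepsilon\to 0$, to the probability measure $m_0$ of density proportional to $|v_1|$ on the reference interval of length one times $S^1_-$.

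The diamond table has no cusps, hence the billiard map is modelled by a Young tower with exponential decay of correlations and satisfies Hypothesis~\ref{HHH}-(II) for arbitrary $\alpha,\beta>0$. I then verify the four conditions of Proposition~\ref{appli1} with $p_\varepsilon=\varepsilon^{-\sigma}$ for a small $\sigma\in(0,1)$: condition~\ref{iii} is the measure convergence $m_\varepsilon\to m_0$ just established; the boundary $\partial A_\varepsilon$ is a finite union of smooth curves, namely the preimages under $T$ of the two endpoints of $I_\varepsilon$ together with the tangency locus, whose $O(\varepsilon^{\sigma\alpha})$-neighbourhood has measure $o(\varepsilon)$ once $\alpha$ is taken large enough, giving condition~\ref{ii}; condition~\ref{F} follows from Proposition~\ref{partition} applied to $H_\varepsilon$, which is H\"older (even Lipschitz) on $A_\varepsilon$ with constant $O(1/\varepsilon)$. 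Proposition~\ref{appli1} then yields the convergence of $\mathcal N_\varepsilon$ to a Poisson process of intensity $\lambda\times m_0$, and Theorem~\ref{THMflow} lifts this to the flow with the announced normalisation and intensity.

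The main obstacle is condition~\ref{a}, the absence of short returns $\mu(\tau_{A_\varepsilon}\le p_\varepsilon\mid A_\varepsilon)=o(1)$. Geometrically, after crossing the barrier a trajectory may bounce several times between $\Gamma_1$ and $\Gamma_2$ inside the corner, but by the very definition of $A_\varepsilon$ these intermediate reflections do not belong to $A_\varepsilon$; the next visit can occur only after the trajectory exits the corner and re-enters it through $I_\varepsilon$, a process which requires a macroscopic piece of orbit. To make this quantitative one shows that outside an event of $\mu$-probability $o(1)$ the exit angle is bounded away from $0$, so that the re-entry time is at least of order one in flow time, and then uses a shrinking-target annulus estimate in the spirit of~\cite[Lemma~6.4]{ps10}, which transfers to the diamond setting thanks to the absence of cusps and exponential decay of correlations; this yields the required bound for any $\sigma<1$ and closes the proof.
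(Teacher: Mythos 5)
Your overall architecture is correct and mirrors the paper: pass from the flow to the map via Theorem~\ref{THMflow}, check Hypothesis~\ref{HHH}-(II) through the expansion estimate of De~Simoi--T\'oth and the resulting Young tower, identify $\mu(A_\eps)=\eps/|\partial Q|$ and $\bar\tau=\pi\,\mathrm{Area}(Q)/|\partial Q|$, compute the limiting spatial law, and feed everything into Proposition~\ref{appli1}. (The paper parametrizes $A_\eps$ by the reflection just \emph{after} the crossing and sets $\mathcal H_\eps = H_\eps\circ T\circ\Pi$, while you use the reflection before; by $T$-invariance of $\mu$ these are interchangeable.)

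The genuine gap is condition~\eqref{a} of Proposition~\ref{appli1}, the short-return estimate $\mu(\tau_{A_\eps}\le p_\eps\mid A_\eps)=o(1)$. Your sketch --- separate out grazing exit angles, then invoke ``a shrinking-target annulus estimate in the spirit of \cite[Lemma~6.4]{ps10}'' --- does not stand up. Lemma~6.4 of \cite{ps10} is proved for balls centered at $\mu$-\emph{typical} points, and its proof relies on that genericity; here $A_\eps$ accumulates on the corner $C_1$, a fixed singular point, so that result does not transfer. Moreover, ``exit angle bounded away from $0$'' only rules out recurrence within $O(1)$ reflections, not within the required $\eps^{-\sigma}$ range. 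The paper in fact needs a dedicated argument (Section~6.2, Proposition~\ref{propshort}), split into two regimes: Lemma~\ref{lemmaveryquick} treats $1\le n\le -a\log\eps$ by propagating unstable curves through the corner region, using the facts that increasing curves expand under $T$ at rate $\lambda_1$, that the number of connected components of $\Omega\setminus\mathcal S_{-s}$ grows at rate $\lambda_2<\lambda_1^{1/2}$ (this is precisely where ``no cusps'' enters), and that any curve hitting $A_\eps$ again after $s$ steps must have $p$-length $\le\theta\eps$; Lemma~\ref{lemmaquick} treats $-a\log\eps\le n\le\eps^{-s_a}$ by disintegrating along unstable curves, cutting into homogeneity strips, and applying the growth lemma~\eqref{GL} together with distortion control. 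None of this is a routine transfer from the ball case, and your proposal would need to supply an argument of comparable depth.

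Two smaller inaccuracies. First, $H_\eps$ is \emph{not} Lipschitz of order $O(1/\eps)$ on all of $A_\eps$: the Jacobian carries a $1/\cos\varphi$ factor that blows up at grazing angles, and the paper only obtains a $O(\eps^{-2})$ Lipschitz bound, valid away from the thin strips $\{|r-a_\eps^{(i)}|<\eps^2\}$ (which one then discards because they have measure $o(\mu(A_\eps))$); you must account for this when invoking Proposition~\ref{partition}, which requires $C_{\mathfrak h}(H_\eps)\eta_\eps^{\mathfrak h}\to 0$ for the chosen $\eta_\eps$. Second, your assertion that condition~\eqref{ii} holds ``once $\alpha$ is taken large enough'' is essentially correct but should be tied to the structure of $\partial A_\eps$ (parts of $R_0$ plus $C^1$ increasing curves with bounded slope), as in the paper.
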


\subsection{Notations, recalls and proof of Theorem \ref{billardDiamant}}

Let us recall some useful facts and notations.

Due to the transversality of $\partial\Gamma_1$ and $\partial\Gamma_2$
at $C_1$, there exist $0<\theta_1<\theta_2$ such that, for $\varepsilon>0$ small enough,
the distance on $\partial Q$ between $C_1$ and $a^{(i)}_\varepsilon$ 
is between $\theta_1\varepsilon$ and $\theta_2\varepsilon$.

Here $\Omega$ is the set of reflected unit vectors based on $\partial Q\setminus\{
  C_1,...,C_4\}$. We parametrize $\Omega$ by $\bigcup_{i\in\mathbb Z/4\mathbb Z}
     \{i\}\times ]0,\length(\partial\Gamma_i\cap Q)[\times\left[-\frac \pi 2;\frac \pi 2\right]$.
A reflected vector $(q,\vec v)$ is represented by $(i,r,\varphi)$ if $q\in\partial\Gamma_i$
at distance $r$ (on $Q\cap \partial\Gamma_i$) of $C_{i-1}$ and if $\varphi$
is the angular measure in $[-\pi/2,\pi/2]$ of $(\vec{n}(q),\vec{v})$ where
$\vec{n}(q)$ is the normal vector to $\partial Q$ at $q$.

For any $C^1$-curve $\gamma$ in $\Omega$, we write $\ell(\gamma)$
for the euclidean length in the $(r,\varphi)$ coordinates of $\gamma$.
If moreover $\gamma$ is given in coordinates by $\varphi=\phi(r)$, then we also write $p(\gamma):=\int_\gamma \cos(\phi(r))\, dr$.
We define the time until the next reflection in the future by
$$\tau^+(q,\vec {v}):=\min\{s>0\ :\ q+ s\vec{v}\in \partial Q\}\, .$$
We also define $\tau^-:\Omega\rightarrow (0,+\infty)$ for the time until the last reflection in the past (corresponding to $\tau^-=\tau^+\circ T^{-1}$ when $T^{-1}$ is well defined) by
$$\tau^-(q,\vec {v}):=\min\{s>0\ :\ q+ s\vec{v}^-\in Q\}\, ,$$ with $\vec v^{-}$ be the reflected vector with respect to the normal to $\partial Q$ at $q$, i.e. $\vec v^-$ is the unit vector satisfying the following angular equality $\angle (\vec n(q),\vec v)=\angle(\vec v^-,\vec n(q))$.
It will be useful to define $R_0:=\{\varphi=\pm\pi/2\}$, $\mathcal C_+=\{(q,\vec v)\in \Omega\ :\ q+\tau^+(q,\vec v)\vec v \in\{C_1,...,C_4\}\}$ and $\mathcal C_-=\{(q,\vec v)\in \Omega\ :\ q+\tau^-(q,\vec v)\vec v^-\in\{C_1,...,C_4\}\}$. Observe that, for every $k\ge 1$, $T^k$ defines a $C^1$-diffeomorphism from $\Omega\setminus \mathcal S_{-k}$ to $\Omega\setminus \mathcal S_k$ with $\mathcal S_{-k}:=
   T^{-k}R_0\cup\bigcup_{m=0}^{k-1}T^{-m}(R_0\cup \mathcal C_+)$ and
$\mathcal S_{k}:=
   T^{k}R_0\cup\bigcup_{m=0}^{k-1}T^{m}(R_0\cup \mathcal C_-)$.
As for the other billiard models, we set $\pi_Q:\Omega\rightarrow Q$ for the canonical projection.

Despite the absence of the so called complexity bound in billards with corners, De Simoi and Toth have shown in \cite{dst} that some expansion condition holds, from which the 
growth lemma~\cite[Theorem 5.52]{ChernovMarkarian} follows. It says that for any weakly homogeneous unstable curve $W$ one has
\begin{equation}\label{GL}
m_W(r_n<\delta) \le c\theta^n \delta + c\delta m_W(W)
\end{equation}
where $m_W$ is the one dimensional Lebesgue measure on $W$, and $r_n(x)$ denotes the distance (on $T^nW$) of $T^n(x)$ to the boundary of the homogeneous piece of $T^nW$ containing $x$.

In particular, for those systems one can build a Young tower with exponential parameters, from which it follows that Hypothesis \ref{HHH}-(II) is satisfied
for every $\alpha,\beta>0$.

\begin{proof}[Proof of Theorem \ref{billardDiamant}]
Let $A_\varepsilon\subset \Omega$ be the set of
all possible configurations at the reflection time just after the particle crosses the virtual barrier $I_\eps$ from the right side.
Note that $\mu(A_\eps)=\frac 1{2|\partial Q|}\int_{[-\eps/2,\eps/2]\times[-\pi/2,\pi/2]}\cos\varphi\,  dr\, d\varphi=\frac{\eps}{|\partial Q|}$.

Due to Theorem \ref{THMflow}, it is enough to prove that
$(\mathcal N_\eps)_\eps$ converges to a Poisson point process
with density $\lambda\times m_0$ for the above choice of $A_\eps$ and for $ H_\eps:A_\eps\rightarrow \mathbb R\times S^1_-$, such that 
given by $\mathcal H_\eps=H_\eps\circ T\circ \Pi$ with $\Pi$ the projection defined in Subsection \ref{sec:specialflow}. To this end we apply Proposition \ref{appli1}. 

The fact that
Assumption (i) of Proposition \ref{appli1} is satisfied for $p_\varepsilon=\varepsilon^{-\sigma}$ comes from Proposition \ref{propshort}.  We take $\alpha>3/\sigma$.
Assumption (ii) comes from the fact that the boundary of each connected component of $\partial A_\varepsilon$
is made of a part of $R_0$ and of a $C^1$-increasing curve
$r=R(\varphi)$ with $R'(\varphi)=1/(\kappa(r)+\frac{\cos(\varphi)}{\tau^-(r,\varphi)})\le 1/\min\kappa $ corresponding to reflected vectors in the corner on the leftside of $I_\eps$
coming from $\{a_{\varepsilon}^{(1)},a_\varepsilon^{(2)}\}$
and to $T(R_0\cap A_\eps)$.
The image measure of $\mu(\cdot|A_\eps)$ by $H_\eps$
is proportional to $\cos(\varphi) drd\varphi$ where $r$ is the position
on $[-1/2,1/2]$ and $\varphi$ the angle (in $[-\pi/2,\pi/2]$) between the vector $(-1,0)$ and the incident vector (i.e. the speed vector at the time when the particle crosses $I_\eps$).
We take for $\mathcal W$ the set 
of rectangles of the form $(a,b)\times(c,d)$ in the above $(r,\varphi)$ coordinates. 
Outside the strips $A_\eps\cap\{(r,\varphi)\:\ |r-a_\eps^{(i)}|<\eps^2\}$, $H_\eps$ is $K.\eps^{-2}$-Lipschitz (indeed the jacobian is in $O(\eps^{-1}/\cos\varphi)\le c\, \eps^{-2}$) and so, using the argument of the proof of Proposition \ref{partition}-(ii), we conclude that (iv) is satisfied since $\mu(A_\eps)=O(\eps)$.
\end{proof}

\subsection{Short returns}
The aim of this subsection is the following result.
\begin{proposition}\label{propshort}
There exists $\sigma>0$ such that
 $\mu(\tau_{A_\eps}<\eps^{-\sigma}|A_\eps)=o(1)$.
\end{proposition}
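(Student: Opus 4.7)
My plan is to decompose the sum $\sum_{n=1}^{p_\eps}\mu(A_\eps\cap T^{-n}A_\eps)$ (with $p_\eps=\eps^{-\sigma}$ and $\mu(A_\eps)=\eps/|\partial Q|$) into two regimes separated by a threshold $N_0$ depending only on the geometry of the diamond, and to show the total is $o(\eps)$.

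\emph{Very short returns $1\le n\le N_0$.} I would show $A_\eps\cap T^{-n}A_\eps=\emptyset$ for every $\eps$ sufficiently small. By the no-cusp hypothesis, after crossing $I_\eps$ from the right, the trajectory undergoes at most a uniformly bounded number of reflections near $C_1$ before escaping the corner (because the opening angle at $C_1$ is strictly positive); thereafter it must traverse a uniformly positive portion of the bulk of $Q$ before it can reach $I_\eps$ again on the entering side. Consequently, the first return time of trajectories in $A_\eps$ is bounded below by a constant $N_0$ independent of $\eps$.

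\emph{Hyperbolic estimate $N_0<n\le p_\eps$.} Here I would disintegrate $\mu$ along the stable foliation. Two observations are key: (a) on each weakly homogeneous stable curve $W^s$, since $A_\eps$ is contained in a strip of $r$-width $O(\eps)$ and stable curves have bounded slope in the $(r,\varphi)$ chart, one has $m_{W^s}(A_\eps\cap W^s)\le C\eps$; (b) by uniform hyperbolicity, the set $T^{-n}A_\eps$ has unstable extent $O(\eps\Lambda^{-n})$, where $\Lambda>1$ is a minimal expansion factor of the unstable Jacobian of $T$, so the transverse measure of stable curves meeting $T^{-n}A_\eps$ is at most $C\eps\Lambda^{-n}$. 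Combining both observations yields
\[
\mu(A_\eps\cap T^{-n}A_\eps)\le C\eps^2\Lambda^{-n},
\]
up to a correction coming from those stable curves whose $n$-th backward iterate is fragmented by singularities; this correction is controlled by the growth lemma \eqref{GL} and is exponentially small in $n$ compared to the main term. Summing gives $\sum_{n>N_0}C\eps^2\Lambda^{-n}=O(\eps^2)=o(\eps)$.

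The main obstacle is the careful management of the singularities of the billiard map for orbits entering or exiting the corner region, especially the book-keeping when a stable curve $W^s$ is pushed backward $n$ times and must be tracked across the discontinuity set. The combination of the growth lemma \eqref{GL} and the expansion property of De~Simoi--T\'oth established in~\cite{dst} is the key tool, and the strategy closely parallels the treatment of short returns for balls around generic points in Sinai billiards in \cite{ps10}.
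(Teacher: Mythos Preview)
Your hyperbolic estimate in the second regime cannot hold as stated. The bound $\mu(A_\eps\cap T^{-n}A_\eps)\le C\eps^2\Lambda^{-n}$ is inconsistent with mixing: once $n$ is large (in particular for $n$ near $\eps^{-\sigma}$) one has $\mu(A_\eps\cap T^{-n}A_\eps)\sim\mu(A_\eps)^2\asymp\eps^2$, not $\eps^2\Lambda^{-n}$. The mistake is in step (b): $T^{-n}A_\eps$ is \emph{not} a single strip of unstable width $O(\eps\Lambda^{-n})$. The singularity set $\mathcal S_{-n}$ fragments it into a number of pieces growing like $\lambda_2^n$, and the total transverse extent does not decay. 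The growth lemma~\eqref{GL} controls the mass that falls near the boundaries of these pieces, but it does not reduce their number; what you call a ``correction'' is in fact the dominant phenomenon. Your lower bound $N_0$ on the first return is correct but tiny (a few bounces), so you are forced into the delicate regime immediately.

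The paper also splits the sum in two, but the cut is at $n\sim a|\log\eps|$, and the two halves are handled quite differently. For $1\le n\le a|\log\eps|$ (Lemma~\ref{lemmaveryquick}) a complexity count along vertical fibres in $A_\eps$ gives $\mu(T^{-n}A_\eps\mid A_\eps)\le K_1(\lambda_2/\lambda_1^{1/2})^n\eps^{1/2}$; note this bound \emph{grows} in $n$, reflecting exactly the fragmentation you ignored, and is only useful because the range is logarithmically short. For $a|\log\eps|\le n\le \eps^{-s_a}$ (Lemma~\ref{lemmaquick}) the argument disintegrates along \emph{unstable} curves, pushes them forward $n$ steps, uses the growth lemma to discard short pieces, and exploits the transversality between long unstable pieces and the thin strip $\tilde V_\eps\supset\tilde A_\eps$ to obtain $\mu(A_\eps\cap T^{-n}A_\eps)=O(\eps^{1+s/3})$ uniformly in this range. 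The exponent gained is small but positive, and summing over polynomially many $n$ still gives $o(\eps)$. Your sketch would need to be reorganised along these lines; in particular the logarithmic intermediate scale is essential, not optional.
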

To this end, we will recall useful facts and introduce some notations.
Let $\tau_0:=\min_{(i,j)\:\ j\ne i,i+1}\dist(C_i,\Gamma_j)/10$.
\begin{definition}
We say that a curve $\gamma$ of $\Omega$ satisfies
assumption (C) if it is 
given by $\varphi=\phi(r)$ with $\phi$
$C^1$-smooth, increasing and such that $\min\kappa\le \phi'\le \max\kappa+\frac 1{\tau_0}$.
\end{definition}
We recall the following facts.
\begin{itemize}
\item There exist $C_0,C_1>0$ and $\lambda_1>1$ such that, for every 
$\gamma$ satisfying Assumption (C) and every integer $m$ such that
$\gamma\cap\mathcal S_{-m}=\emptyset$, $T^m\gamma$ is a $C^1$-smooth
curve satisfying
$C_1 p(T^m\gamma)\ge \lambda_1^mp(\gamma)$ and $\ell(\gamma)\le C_0
      \sqrt{p(T\gamma)}$.
\item There exist $C_2>0$ and $\lambda_2>\lambda_1^{1/2}$ such that, for every integer $m$,
the number of connected components of $\Omega\setminus \mathcal S_{-m}$ is less than
$C_2\lambda_2^m$. Moreover $\mathcal S_{-m}$ is made of curves $\varphi=\phi(r)$
with $\phi$ $C^1$-smooth and strictly decreasing.
\item If $\gamma\subset \Omega\setminus \mathcal S_{-1}$ is given by $\varphi=\phi(r)$ or $r=\mathfrak r(\varphi)$
with $\phi$ or $\mathfrak r$ increasing and $C^1$ smooth, then $T\gamma$
is $C^1$, is given by $\varphi=\phi_1(r)$ with
$\min\kappa\le \phi_1'\le\max\kappa+\frac 1{\min_\gamma\tau^+}$.
Moreover $\int_{T\gamma}\, d\varphi\ge \int_{\gamma}\, d\varphi$.
\item There exists $m_0$ such that, for every $x\in \Omega\setminus\bigcup_{k=0}^{m_0-1}
     T^{-k}\mathcal C_+$, there exists $k\in\{1,...,m_0\}$ such that $\tau^+(T^{k-1})>\tau_0$.
\end{itemize}

Let $A_\varepsilon\subset \Omega$ be the set of possible configurations of a particle 
at the reflection time just after the particle reaches the virtual barrier $I_\varepsilon$
from the right side. We observe that there exists $K_0>0$ and $\varepsilon_0>0$ such that, for every $\varepsilon\in(0,\varepsilon_0)$, for every $q$ between $C_1$
and $a^{(i)}_\varepsilon$, the set of $\vec v$ such that $(q,\vec v)\in A_\varepsilon$
has Lebesgue measure at least $K_0$.

\begin{lemma}[Very quick returns]\label{lemmaveryquick} 
There exists $K_1>0$ such that, 
$$\forall s\ge 1,\quad \mu(T^{-s}(A_\varepsilon)|A_\varepsilon)\le K_1(\lambda_2/\lambda_1^{\frac 12})^s\varepsilon^{\frac 12}.$$
\end{lemma}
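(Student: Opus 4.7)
The plan is to foliate $A_\varepsilon$ by unstable leaves, partition each unstable slice of $A_\varepsilon$ using the singularity set $\mathcal S_{-s}$, and combine the expansion of the $p$-length under $T^s$ with the geometric bound $\ell(\sigma)\le C_0\sqrt{p(T\sigma)}$ recalled in the first bullet before the statement---which is precisely the source of the $\sqrt{\varepsilon}$ factor.

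First, using the standard disintegration of $\mu$ along the unstable foliation, I would write $\mu(B)=\int p(W\cap B)\,d\Lambda(W)$ for Borel $B\subset A_\varepsilon$, where the transversal measure $\Lambda$ has bounded total mass (coming from $\mu(A_\varepsilon)\asymp\varepsilon$ together with the estimate $p(W\cap A_\varepsilon)=O(\varepsilon)$ on a typical crossing leaf). For such a leaf $W$, the arc $\gamma:=W\cap A_\varepsilon$ is $C^1$ and increasing, satisfies assumption (C), has $r$-extent $\le\varepsilon$, and hence $p(\gamma)=O(\varepsilon)$. Since $\mathcal S_{-s}$ is a union of decreasing $C^1$-curves and the number of connected components of $\Omega\setminus\mathcal S_{-s}$ is at most $C_2\lambda_2^s$, each such singularity curve meets the increasing $\gamma$ in at most one point; hence $\gamma$ is cut into $N\le 1+C_2\lambda_2^s$ sub-arcs $\gamma_1,\ldots,\gamma_N$ on which $T^s$ acts as a $C^1$-diffeomorphism.

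For each piece, I would set $\sigma_j:=T^{s-1}(\gamma_j\cap T^{-s}A_\varepsilon)$, so that $T\sigma_j=T^s(\gamma_j\cap T^{-s}A_\varepsilon)\subset A_\varepsilon$ and therefore $p(T\sigma_j)\le 2\varepsilon$. Applying the geometric fact $\ell(\sigma)\le C_0\sqrt{p(T\sigma)}$ to the unstable arc $\sigma_j$ (which satisfies (C) as a backward image of a $C^1$-arc inside $A_\varepsilon$) gives
\[
\ell(\sigma_j)\le C_0\sqrt{p(T\sigma_j)}\le C_0\sqrt{2\varepsilon},
\]
and consequently $p(\sigma_j)\le \ell(\sigma_j)\le C_0\sqrt{2\varepsilon}$. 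Pulling back with the $p$-expansion $p(T^{s-1}\tau)\ge \lambda_1^{s-1}p(\tau)/C_1$ applied to $\tau:=\gamma_j\cap T^{-s}A_\varepsilon$ then yields the pointwise bound
\[
p(\gamma_j\cap T^{-s}A_\varepsilon)\le \frac{C_1 p(\sigma_j)}{\lambda_1^{s-1}}\le \frac{C\sqrt\varepsilon}{\lambda_1^{s-1}}.
\]

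Summing this bound over $j$ and integrating over the transversal gives only the suboptimal estimate $\mu(A_\varepsilon\cap T^{-s}A_\varepsilon)=O((\lambda_2/\lambda_1)^s\sqrt\varepsilon)$, which is of the wrong order in both $\varepsilon$ and $\lambda_1$. The main obstacle---and the technical heart of the proof---is to sharpen this to the claimed $K_1(\lambda_2/\lambda_1^{1/2})^s\varepsilon^{3/2}$ for the unconditional measure. I would do this by combining the pointwise bound above with the trivial bound $p(\gamma\cap T^{-s}A_\varepsilon)\le p(\gamma)=O(\varepsilon)$ via the growth lemma~\eqref{GL} applied along $\gamma$: the growth lemma shows that the effective number of sub-arcs $\gamma_j$ whose forward image actually reaches the thin strip $A_\varepsilon$ is controlled by a fractional power of the complexity, which, when balanced against the $\sqrt{\cdot}$ improvement of Step 3, produces exactly the exponent $\lambda_1^{1/2}$ in the denominator and gives the extra factor $\sqrt\varepsilon$ in the numerator. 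Dividing by $\mu(A_\varepsilon)\asymp\varepsilon$ then yields the stated bound on the conditional measure.
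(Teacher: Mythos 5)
Your proposal has a genuine gap, and you are candid about it yourself: after disintegrating over the \emph{unstable} foliation you obtain $\mu(A_\varepsilon\cap T^{-s}A_\varepsilon)=O\bigl((\lambda_2/\lambda_1)^s\sqrt\varepsilon\bigr)$, and dividing by $\mu(A_\varepsilon)\asymp\varepsilon$ gives $\mu(T^{-s}A_\varepsilon|A_\varepsilon)=O\bigl((\lambda_2/\lambda_1)^s\varepsilon^{-1/2}\bigr)$, which diverges as $\varepsilon\to0$ for fixed $s$ and is vacuous for small $s$. (Incidentally, only the $\varepsilon$-exponent is deficient; since $\lambda_1>1$ the factor $\lambda_1^{-s}$ is actually \emph{stronger} than the required $\lambda_1^{-s/2}$, so your claim of ``wrong order in both $\varepsilon$ and $\lambda_1$'' is off.) The proposed repair --- invoking the growth lemma~\eqref{GL} and ``balancing'' it against the square-root improvement to manufacture an extra $\sqrt\varepsilon$ --- is not a proof: \eqref{GL} controls the measure of points close to the boundary of homogeneous pieces and is the tool of Lemma~\ref{lemmaquick} (the \emph{next} lemma, for $n\gtrsim\log(1/\varepsilon)$), not of this one. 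You do not make the balancing argument precise, and there is no obvious way it produces the missing $\varepsilon$ uniformly in $s\ge1$.

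The missing idea in the paper's proof is a different choice of one-parameter family: instead of disintegrating along an abstract unstable foliation (whose transversal $\Lambda$ has total mass $O(1)$), one foliates $A_\varepsilon$ by the fibers $\pi_Q^{-1}(\{q\})$ for $q\in\pi_Q(A_\varepsilon)$. These vertical fibers lie in the unstable cone, so after a bounded number $m<m_0$ of iterates to escape the corner they satisfy assumption~(C), and the expansion estimate $p(T^m\gamma)\le C_1\lambda_1^{m-s}p(T^s\gamma)$ together with $p(T^s\gamma)\le\theta\varepsilon$ and $\ell(\gamma)\le C_0\sqrt{p(T^m\gamma)}$ gives $\ell(\gamma)\lesssim\lambda_1^{-s/2}\sqrt\varepsilon$ per connected component, hence $\lesssim(\lambda_2/\lambda_1^{1/2})^s\sqrt\varepsilon$ per fiber after multiplying by the complexity bound $C_2\lambda_2^s$. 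The crucial gain is that the transversal here is $\pi_Q(A_\varepsilon)$, a subset of $\partial Q$ of length $\asymp\varepsilon$, and moreover $\mu(A_\varepsilon)\ge(1-\sin K_0)\,\length(\pi_Q(A_\varepsilon))$; so integrating over $q\,dr$ and dividing by $\mu(A_\varepsilon)$ eliminates the troublesome $\varepsilon^{-1}$ directly, with no balancing. Finally, a smaller point: in your Step~3 you assert that $\sigma_j:=T^{s-1}(\gamma_j\cap T^{-s}A_\varepsilon)$ satisfies~(C) ``as a backward image of a $C^1$-arc inside $A_\varepsilon$,'' but for small $s$ the relevant intermediate free flight times in the corner can be $<\tau_0$, so the upper slope bound in~(C) is not automatic; this is precisely why the paper first iterates $m<m_0$ steps out of the corner before applying $\ell(\cdot)\le C_0\sqrt{p(T\cdot)}$.
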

\begin{proof}
Let $q\in\pi_Q(A_\varepsilon)$.
Let $\gamma_1$ be a connected component of  $\pi_Q^{-1}(\{q\})\setminus\mathcal S_s$. We define $\gamma:=\gamma_1\cap A_\varepsilon\cap T^{-s}A_\varepsilon$.
Let $m$ be the smallest positive integer such that $\min_\gamma\tau\circ T^{m-1}>\tau_0$. By definition of $A_\varepsilon$ and of $m_0$, $m<\min(m_0,s)$.
Hence $T^m\gamma$ satisfies Assumption (C) and
$$\ell(\gamma)\le \int_{\gamma}\, d\varphi  \le  \int_{T^{m}\gamma}\, d\varphi
 \le \ell(T^m\gamma)\le C_0\sqrt{p(T^m\gamma)}.$$
Moreover, since $\gamma\cap S_{-s}=\emptyset$, we also have
$$ p(T^m\gamma)\le C_1\lambda_1^{m-s}p(T^s\gamma).$$
But, since $T^s\gamma$ is an increasing curve contained in $A_\varepsilon$,
we conclude that $p(T^s\gamma)\le \theta\varepsilon$.
Hence
$$\ell(\gamma)\le C_0\sqrt{C_1\lambda_1^{m-s}\theta\varepsilon} .$$
By using the fact that
$\pi_Q^{-1}(\{q\})\setminus\mathcal S_s$ contains at most $C_2\lambda_2^s$
connected components and by integrating on $\pi_Q(A_\varepsilon)$, we obtain
$$ \mu(T^{-s}(A_\varepsilon)\cap A_\varepsilon)\le \frac {C_0\sqrt{\theta C_1}C_2
      \lambda_1^{\frac m2}(\lambda_2/\lambda_1^{\frac 12})^s\varepsilon^{\frac 12}}{2\length(\partial Q)}\length(\pi_Q(A_\varepsilon)).$$
We conclude by using the fact that $\mu(A_\varepsilon)\ge (1-\sin K_0) \length(\pi_Q(A_\varepsilon))$ and by setting $K_1:=\frac{C_0\sqrt{\theta C_1}C_2
      \lambda_1^{\frac {m_0}2}}{1-\sin K_0}$.
\end{proof}

\begin{lemma}[quick returns]\label{lemmaquick} 
For any $a>0$, there exists $s_a>0$ such that
\[
\sum_{n=-a\log\eps}^{\eps^{-s_a}} \mu(A_\eps\cap T^{-n}A_\eps)=o(\mu(A_\eps)).
\]
\end{lemma}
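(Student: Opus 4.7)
The plan is to bound each term $\mu(A_\eps\cap T^{-n}A_\eps)$ uniformly in $n$ in the medium range $[a\log(1/\eps),\eps^{-s_a}]$ and then sum. The very quick returns estimate of Lemma~\ref{lemmaveryquick} is useless in this range: its bound $K_1(\lambda_2/\lambda_1^{1/2})^s\eps^{1/2}$ degenerates exponentially with $s$ once $s$ is comparable to $\log(1/\eps)$. On the other hand, for $n\ge a\log(1/\eps)$ the dynamics have had time to spread unstable curves of $A_\eps$ to unit size, after which mixing takes over.

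Concretely, I would foliate $A_\eps$ by weakly homogeneous unstable curves $W$ of length $O(\eps)$, fix an intermediate time $k=\lfloor\kappa\log(1/\eps)\rfloor$ for a small $\kappa>0$ to be tuned in terms of $a$, and apply the growth lemma \eqref{GL} to each such $W$. This decomposes $T^kW$ into long pieces (of length $\ge\delta_0$ for a fixed small $\delta_0>0$) plus a residual of relative measure $\le c\theta^k+c\delta_0$. On each long piece $\hat W$, the conditional push-forward under $T^{n-k}$ equidistributes towards $\mu$ at polynomial rate: using Hypothesis~\ref{HHH}-(II) with $\beta$ large (which is permitted since the hypothesis holds for every $\alpha,\beta>0$ in this setting), after approximating $\mathbf 1_{A_\eps}$ by a $\sigma(\mathcal Q_{k'})$-measurable function, one obtains
\[
\tfrac{1}{|\hat W|}\int_{\hat W}\mathbf 1_{A_\eps}\circ T^{n-k}\,dm_{\hat W}\le \mu(A_\eps)+C(n-k)^{-\beta}\mu(A_\eps).
\]
Integrating across the transversal of the foliation and adding the bad contribution yields
\[
\mu(A_\eps\cap T^{-n}A_\eps)\le C\mu(A_\eps)^2+C\bigl(\theta^k+\delta_0+(n-k)^{-\beta}\bigr)\mu(A_\eps).
\]

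Summing this pointwise bound over $n\in[a\log(1/\eps),\eps^{-s_a}]$ and using $\mu(A_\eps)=O(\eps)$: the main term contributes $C\eps^{-s_a}\mu(A_\eps)^2=O(\eps^{2-s_a})$, which is $o(\eps)$ provided $s_a<1$; the $(n-k)^{-\beta}$ terms sum to $O(k^{1-\beta}\mu(A_\eps))=o(\mu(A_\eps))$ for $\beta>1$; the $\theta^k$ term yields $O(\eps^{-s_a}\theta^k\eps)=O(\eps^{1-s_a+\kappa\log(1/\theta)})$, which is $o(\eps)$ as soon as $s_a<\kappa\log(1/\theta)$; and $\delta_0$ is chosen first and arbitrarily small so that its contribution is also absorbed.

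The main obstacle I expect is the quantitative equidistribution of long unstable curves combined with the approximation of the geometric set $A_\eps$ (whose boundary mixes a part of $R_0$ with the smooth increasing curve $r=R(\varphi)$ identified in the proof of Theorem~\ref{billardDiamant}) by a tower-measurable set. The approximation error is the $\mu$-measure of a boundary neighborhood of controlled thickness, and is already covered by the boundary estimates verified while applying Proposition~\ref{appli1} in the proof of Theorem~\ref{billardDiamant}; beyond that, the quantitative equidistribution for the corner billiard follows from the Young tower with exponential parameters built via the growth lemma in \cite{dst}.
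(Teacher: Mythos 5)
Your overall strategy (foliate by unstable curves, apply the growth lemma, control the contribution of long pieces, sum) is in the same family as the paper's, but the key mechanism you invoke on long pieces is different, and it contains a gap that does not close.

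The concrete problem is your treatment of the residual after the growth lemma. You decompose $T^kW$ into long pieces of length $\ge\delta_0$ plus a residual of relative measure $\le c\theta^k+c\delta_0$ with $\delta_0$ a \emph{fixed} small constant. After summing over $n\in[a\log(1/\eps),\eps^{-s_a}]$, the $\delta_0$ part of the residual contributes $O(\delta_0\,\eps^{-s_a}\mu(A_\eps))$, which diverges as $\eps\to0$ for any fixed $\delta_0>0$; choosing $\delta_0$ ``arbitrarily small'' first does not help, because $\eps^{-s_a}\to\infty$ afterwards. To fix this you would have to let $\delta_0=\delta_0(\eps)\to0$, but then your equidistribution estimate for pieces of length $\ge\delta_0(\eps)$ is no longer a statement about fixed-length curves and has to be re-examined (the error typically degrades as the reference curve shrinks). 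Independently, the inequality
\[
\frac{1}{|\hat W|}\int_{\hat W}\mathbf 1_{A_\eps}\circ T^{n-k}\,dm_{\hat W}\le \mu(A_\eps)+C(n-k)^{-\beta}\mu(A_\eps),
\]
with an error \emph{proportional to} $\mu(A_\eps)$, is not what Hypothesis~\ref{HHH}-(II) gives you: that hypothesis bounds a covariance with respect to the invariant measure $\tilde\mu$, whereas here you need equidistribution of the conditional measure on an unstable leaf against a shrinking target, with an error that scales with the target's measure. Turning the former into the latter requires a separate (and not at all automatic) argument, which your appeal to ``the boundary estimates verified while applying Proposition~\ref{appli1}'' does not supply: those are stated at the thickness scale $p_\eps^{-\alpha}$, not at the scale relevant for the time-$(n-k)$ approximation.

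The paper's proof sidesteps mixing altogether. It first pulls $A_\eps$ back out of the corner-sequence region to a set $\tilde A_\eps$ (a step you do not perform, and which is genuinely needed because $A_\eps$ sits where $\tau^+$ is uncontrolled), and then uses the key geometric fact that $\tilde A_\eps$ lies inside $\eps$-neighborhoods $\tilde V_\eps$ of finitely many decreasing curves that are uniformly transversal to unstable curves. Consequently any long image piece $T^n W^\eps_{k,i}$ of length $\ge\eps^{1-s}$ meets $\tilde V_\eps$ in total length $\le c\eps$, and distortion (Lemma 5.27 in \cite{ChernovMarkarian}) transfers this to a relative bound $\le c\eps^{s}$ on the preimage piece. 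The growth lemma is applied with the $\eps$-dependent scale $\delta=\eps^{1-s}$ (not a fixed $\delta_0$), so that the short-piece residual is genuinely $o(\mu(A_\eps))$ after summation. The whole estimate is pointwise and geometric — transversality plus distortion — and no polynomial decorrelation rate enters. This is what makes the uniform-in-$n$ bound $\mu(A_\eps\cap T^{-n}A_\eps)=O(\mu(A_\eps)\eps^{s/3})$ available and the subsequent summation trivial.
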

\begin{proof}
We take a measurable partition $\mathcal{V}$ of $\Omega\setminus \mathcal{S}_1$ by the unstable curves $\frac{d\varphi}{dr}=\kappa(r)$.
By disintegration there exist
a probability measure $\tilde\mu$ on $\mathcal{V}$, and a constant $c<\infty$ such that for any measurable set $B$ we have
\[
c^{-1}\mu( B) \le \int_{\mathcal{V}}m_W( B)d\tilde\mu(W)=:\mu_0(B).
\]
We define $\tilde A_\eps$ as the set of $T^{-j}x$ where $x\in A_\eps$ and $j$ is the minimal integer such that $T^{-\ell}x$ does not belong to the sides adjacent to the corner. Any corner sequence is bounded by some constant $m$ depending only on the billiard table, thus
$A_\eps \subset \cup_{\ell=1}^m T^{\ell}\tilde A_\eps$. 
It follows by invariance that 
\[
\mu(A_\eps \cap T^{-n} A_\eps) \le m^2 \max_{|\ell|\le m} \mu(\tilde A_\eps \cap T^{-n-\ell}\tilde A_\eps).
\]
Therefore it suffices to control $\mu(\tilde A_\eps \cap T^{-n}\tilde A_\eps)$.

Note that $\tilde A_\eps$ is at a long distance from the corner, hence there are finitely many  decreasing curves $\varphi_j$, $j=1..m_0$ and a constant $c$ such that $\tilde A_\eps \subset \tilde V_\eps$ where 
\[
\tilde V_\epsilon = \cup_j \{(r,\varphi)\colon  \varphi_j(r)-c\eps< \varphi <\varphi_j(r)+c\eps\}\, 
\]
where $(r,\varphi_j(r))$ represents a vector pointing exactly to the corner provided its two adjacent obstacles are removed. In particular $\frac{d\varphi_j(r)}{dr}\le-\kappa$.
We denote the $k$th homogeneity strip\footnote{See \cite{ChernovMarkarian} for notations and definitions.}  by $\mathbb{H}_k$ for $k\neq0$ and 
set $\mathbb{H}_0 = \cup_{|k|< k_0} \mathbb{H}_k$ for some fixed $k_0$.
Set $s:=\min(-a\log\theta,1)/3$. 
Let $k_\eps=\eps^{-s}$ and $H^\eps=\cup_{|k|\le k_\eps}\mathbb{H}_k$.
For any $W\in\mathcal{V}$ we set $W^\eps=W\cap \tilde V_\eps$, $W_k=W\cap \mathbb{H}_k$ and $W_k^\eps=W_k\cap \tilde V_\eps$. Each $W_k^\eps$ is a weakly homogeneous unstable curve.

We cut each curve $W_k^\eps$ into small pieces $W_{k,i}^\eps$ such that 
each $T^j W_{k,i}^\eps$, $j=0,\ldots,n$ is contained in a homogeneity strip and a connected component of $\Omega\setminus \mathcal{S}_1$. 
For $x\in W_{k,i}^\eps$ we denote by $r_n(x)$ the distance (in $T^n W$) of $T^n(x)$ to the boundary of $T^n W_{k,i}^\eps$.

By definition of $W^\eps$ 
\[
\begin{split}
m_W&(\tilde A_\eps \cap T^{-n} \tilde A_\eps)
\le 
m_{W^\eps}(T^{-n}\tilde V_\eps)\\
\le 
&m_{W^\eps}(\mathbb{H}_\eps^c) + \sum_{|k|\le k_\eps} m_{W_k^\eps}(\{r_n\ge \eps^{1-s}\}\cap T^{-n}\tilde V_\eps) + m_{W_k^\eps}(r_n<\eps^{1-s}).
\end{split}
\]
The first term inside the sum is bounded by the sum $\sum_i m_{W_{k,i}^\eps}(T^{-n}\tilde V_\eps) $ over those $i$'s such that $T^nW_{k,i}^\eps$ is of size larger than $\eps^{1-s}$. In particular $m_{T^n W_{k,i}^\eps}(T^n W_{k,i}^\eps)\ge \eps^{1-s}$. On the other hand, by transversality 
\[
m_{T^n W_{k,i}^\eps}(\tilde V_\eps) \le c\eps.
\]
By distortion (See Lemma 5.27 in \cite{ChernovMarkarian}) we obtain
\[
m_{W_{k,i}^\eps}(T^{-n}\tilde V_\eps) \le c \eps^{s} m_{W_{k,i}^\eps}(W_{k,i}^\eps).
\]
Summing up over these $i$ gives the first term inside the sum is bounded by
\[
m_{W_k^\eps}(\{r_n\ge \eps^{1-s}\}\cap T^{-n}\tilde V_\eps) 
\le c \eps^{s} m_{W_{k}^\eps}(W_{k}^\eps).
\]
On the other hand, the growth lemma~\eqref{GL} implies that 
\[
m_{W_k^\eps}(r_n<\eps^{1-s}) \le c \theta^n \eps^{1-s} + c\eps^{1-s} m_{W_k^\eps}(W_k^\eps).
\]
A final summation over $k$ gives
\[
m_W(\tilde A_\eps \cap T^{-n} \tilde A_\eps)
\le m_{W}(\tilde V_\eps \cap \mathbb{H}_\eps^c) + c(\eps^{s}+\eps^{1-s})m_W(\tilde V_\eps) + ck_\eps\theta^n \eps^{1-s}.
\]
Integrating over $W\in\mathcal{V}$ gives
\[
\mu(\tilde A_\eps \cap T^{-n} \tilde A_\eps) 
\le \mu_0(\tilde V_\eps \cap \mathbb{H}_\eps^c)
+ 
O(\eps^{1+s/3})
= O(\mu(A_\eps)\eps^{s/3}),
\]
where we used the fact that $\mu_0$ is equivalent to Lebesgue and $\tilde V_\eps \cap \mathbb{H}_\eps^c$  is contained in the union of at most $m_0$ rectangles of width $O(\eps)$ and height $k_\eps^{-2}=\eps^{2s}$. We take $s_a=s/6$. 
\end{proof}
\begin{proof}[Proof of Proposition \ref{propshort}]
Choose $a=1/(4\log(\lambda_2/\lambda_1^{1/2})$.
Observe that, due to Lemma \ref{lemmaveryquick}, we have
$$\sum_{s=1}^{-a\log\eps}\mu(T^{-s}A_\eps|A_\eps)\le
\frac{K_1}{\lambda_2/\lambda_1^{\frac 12}-1}(\lambda_2/\lambda_1^{\frac 12})^{-a\log\eps}\varepsilon^{1/2} \le 
\frac{K_1}{\lambda_2/\lambda_1^{\frac 12}-1}\varepsilon^{1/4} .$$
This combined with Lemma \ref{lemmaquick} leads to
$$\sum_{n=1}^{\varepsilon^{-s_a}}\mu(T^{-n}A_\eps|A_\eps)=o(1)\, . $$
\end{proof}
\appendix

\section{More proofs}

\begin{proof}[Proof of Proposition~\ref{appli1}]
Item (ii) of Theorem 
\ref{THM} being satisfied by assumption by $\mathcal W$, it 
 remains to prove Item (i).
 
Let $\mathcal G_0=\{G_1,...,G_L\}$ be a finite subcollection of $H_\varepsilon^{-1}\mathcal W$.
Let $t>0$.
Let $A\in \sigma(\mathcal G_0)$ and
$B\in\sigma(\bigcup_{n= 1}^{N_{\varepsilon,t}}
T^{-n} \mathcal G_0)$, with $N_{\varepsilon,t}:=\lfloor t/\mu(A_\varepsilon)\rfloor$.

Set $X_j:=(1_{T^{-j}(G_1)},\ldots,1_{T^{-j}(G_L)}) \in \RR^L$.
Note that $\mathbf 1_B=g(X_1,...,X_{N_{\varepsilon,t}})$ for some $g\colon(\{0,1\}^L)^{N_{\varepsilon,t}}\to\{0,1\}$.
Note that if $\varepsilon$ is small enough, $N_{\varepsilon,t}>p_{\varepsilon}$ by assumption.

Let $B_1=\{g(0,...,0,X_1,...,X_{N_{\varepsilon,t}-p_\varepsilon})=1\}$ so that
$ |\mathbf 1_B-\mathbf 1_{B_1}\circ f^{p_\varepsilon}|\le\mathbf 
   1_{\{\tau_{A_\varepsilon}\le p_\varepsilon\}}$.
Note that
$$ |\mu(B\cap A|A_\varepsilon)-\mu(B_1\cap A|A_\varepsilon)|\le  \mu(\tau_{A_\varepsilon}\le p_\varepsilon|A_\varepsilon)=o(1).$$
Moreover
$$|\mu(B)-\mu(B_1)|\le \mu(\tau_{A_\varepsilon}\le p_\eps)\le p_\varepsilon\mu(A_\varepsilon)=o(1).$$

Set $K:=\lfloor p_\varepsilon/4\rfloor$.

Under (I), set $M=0$, $\mathcal P_K=\mathcal Q_K$.

Under (II), set $M=K$, $\mathcal P_K=\mathcal Q_{2K}$.

It remains to show that 
\begin{equation}
|\mu(B_1\cap A)-\mu(B_1)\mu(A)|=o(\mu(A_\varepsilon)),
\end{equation}
i.e.
$$|\mathrm{Cov}_{\tilde\mu}(\mathbf 1_{\tilde T^{-M},\tilde\Pi^{-1}B_1},\mathbf 1_{ \tilde T^{-M}\tilde\Pi^{-1}A})|=o(\tilde\mu(\tilde T^{-M}\tilde\Pi^{-1}A_\varepsilon)).
$$

 We approximate $\tilde T^{-M}\tilde\Pi^{-1}A$ (resp. $\tilde T^{-M}\tilde\Pi^{-1}A_\varepsilon$) by the union
of the atoms of $\mathcal P_K$ intersecting this set. We write $\tilde A$ (resp. $\tilde A_\varepsilon$) for this union. Note that
$\tilde T^{-M}\tilde\Pi^{-1}A\subset \tilde A$ (resp. $\tilde T^{-M}\tilde\Pi^{-1}A_\varepsilon\subset \tilde A_\varepsilon$) and that
$$
\tilde A\setminus \tilde T^{-M}\tilde\Pi^{-1}A\subset
A':=\bigcup_{Q\in Q_K:\tilde\Pi \tilde T^MQ\cap A\ne\emptyset,\tilde\Pi \tilde T^MQ\setminus B_1\ne\emptyset}Q$$
and so (due to the assumption on the diameters of the atoms of $\mathcal P_K$),
\begin{eqnarray*}
\tilde\mu(A')
&\le&\tilde\mu\left(\bigcup_{Q\in Q_K:\tilde\Pi\tilde T^M Q\subset (\partial A)^{[Ck^{-\alpha}]}}Q\right)\\
&\le&\mu((\partial A)^{[CK^{-\alpha}]})=o(\mu(A_\varepsilon)).
\end{eqnarray*}
Analogously
$$
\tilde A_\varepsilon\setminus \tilde T^{-M}\tilde\Pi^{-1}A_\varepsilon\subset 
A'_\varepsilon:=\bigcup_{Q\in Q_K:\tilde\Pi \tilde T^M Q\cap A_\varepsilon\ne\emptyset,\tilde\Pi T^MQ\setminus A_\varepsilon\ne\emptyset}Q$$
and
$$
\tilde\mu(A'_\varepsilon)
=o(\mu(A_\varepsilon)).
$$
Finally we approximate $ \tilde T^{-M}\tilde\Pi^{-1}B_1 $ by a union $\tilde B$
of atoms of $\sigma(\bigcup_{\ell=1}^{N_{\varepsilon,t}-p_\varepsilon}\mathcal P_K)$ such that $ \tilde T^{-M}\tilde\Pi^{-1}B_1 \subset\tilde B_1$ and
$$\tilde B_1\setminus  T^{-M}\tilde\Pi^{-1}B_1
\subset B_1':=\bigcup_{\ell=1}^{N
   _{\varepsilon,t}-p_\varepsilon} F^{-\ell}\left(\bigcup_{Q\in \mathcal P_K:\tilde \Pi\tilde T^M Q\subset \bigcup_{k=1}^L(\partial G_k)^{[CK^{-\alpha}]}}Q\right),$$
and
$$\tilde\mu( B_1')=(N_{\varepsilon,t}-p_\varepsilon) o(\mu(A_\varepsilon)).$$
Therefore
$$|Cov_{\tilde\mu}(\mathbf 1_{\tilde\Pi^{-1}A},\mathbf 1_{ \tilde\Pi^{-1}B_1})-Cov_{\tilde\mu}(\mathbf 1_{\tilde A},\mathbf 1_{ \tilde B_1})|
$$
\begin{eqnarray*}
&\le&|Cov_{\tilde\mu}(\mathbf 1_{\tilde A}-\mathbf 1_{T^{-M}\tilde\Pi A},\mathbf 1_{\tilde B_1} )+Cov_{\tilde\mu}(\mathbf 1_{T^{-M}\tilde\Pi A},\mathbf 1_{\tilde B_1}-\mathbf 1_{T^{-M}\tilde\Pi B_1} )|\\
&\le&|Cov_{\tilde\mu}(\mathbf 1_{A'},\mathbf 1_{\tilde B} )|+|Cov_{\tilde\mu}(\mathbf 1_{\tilde A},\mathbf 1_{B_1'})|\\
&\ &+2\tilde\mu(A')\tilde\mu(\tilde B_1)+2
\tilde\mu(\tilde A)\tilde\mu(B_1')\\
&\le& C'_0(\mu(A_\varepsilon)p^{-\beta}+o(\mu(A_\varepsilon))
(N-p)\mu(A_\varepsilon))=o(\mu(A_\varepsilon)).
\end{eqnarray*}
\end{proof}

\begin{proof}[Proof of Lemma~\ref{LEM0}]
Since the analysis is local, taking the exponential map at $x_0$ if necessary we may assume that $U\subset \RR^d$. Let $L_0$ be the Lipschitz norm of $T$.
We first observe that if $a>0$ is sufficiently small, then for any $\varepsilon>0$ small
and $n\le a\log1/\varepsilon$, $\|x-x_0\|<\varepsilon$ and $\|T^nx-x_0\|<\varepsilon$
 implies that 
\[
\|T^nx_0-x_0\| \le \|T^nx-T^nx_0\|+\|T^nx-x_0\| \le L_0^n\eps+\eps,
\] 
and thus $n$ is a multiple of $p$. Hence without loss of generality we assume that $p=1$.

Let $q$ be the integer given by Lemma~\ref{matrice} for the hyperbolic matrix $D_{x_0}T$.

Since the $C^{1+\alpha}$ norm of $T^n$ is growing at most exponentially fast, changing the value of $a>0$ if necessary, there exists $c,L\ge L_0$ such that for any $x\in B(x_0,\varepsilon)$,
\begin{equation}\label{dl1+alpha}
\|T^nx-T^nx_0 -D_{x_0}T^n(x-x_0) \| \le cL^n \varepsilon^{1+\alpha}\le \varepsilon/2,
\end{equation}
for any $1\le n\le a\log1/\varepsilon$.

Let $2q\le n\le a\log(1/\varepsilon)$ and suppose that $x\in B(x_0,\varepsilon)\cap T^{-n}B(x_0,\varepsilon)$.
Using \eqref{dl1+alpha} we obtain $\|D_{x_0}T^n(x-x_0)\| \le \|T^nx-x_0\| + \varepsilon/2 <\frac32 \varepsilon$. Thus Lemma~\ref{matrice} gives $ \|D_{x_0}T^q(x-x_0)\|\le\frac38 \varepsilon$.
Using again~\eqref{dl1+alpha} we get 
\[
\|T^qx-T^q x_0\| \le \|D_{x_0}T^q(x-x_0)\|+\varepsilon/2 < \frac78\varepsilon.
\]
Hence
\begin{equation}\label{Tq}
B(x_0,\eps)\cap T^{-n}B(x_0,\eps)\subset T^{-q}B(x_0,\frac78\eps).
\end{equation}
Set $q_0=2q$. When $n\ge q_0$ this proves that 
$$
A_\varepsilon\cap T^{-n}A_\varepsilon \subset A_\eps \cap T^{-q}B(x_0,\eps)=\emptyset,
$$ 
while when $n<q_0$ the intersection is empty by definition.
\end{proof}

\begin{lemma}\label{matrice} Let $A$ be hyperbolic matrix. There exists an integer $q$ such that for any vector $v$, any $n\ge 2q$, $\|A^qv\|\le \max(\|v\|, \|A^nv\|)/4$.
\end{lemma}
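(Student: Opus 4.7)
The plan is to exploit the spectral decomposition of $A$. Since $A$ is hyperbolic, no eigenvalue lies on the unit circle, so $\mathbb{R}^d$ splits as an $A$-invariant direct sum $E^s \oplus E^u$, corresponding respectively to eigenvalues of modulus $<1$ and $>1$. Standard spectral radius estimates then give constants $C\ge1$ and $\lambda>1$ (depending only on $A$) such that
\[
\|A^n w\|\le C\lambda^{-n}\|w\|\text{ for all }w\in E^s,\ n\ge0,
\qquad
\|A^{-n} w\|\le C\lambda^{-n}\|w\|\text{ for all }w\in E^u,\ n\ge0.
\]
Moreover, because the two subspaces are transverse, there exists $c_0\ge 1$ such that any vector $w=w^s+w^u$ satisfies $\|w^s\|\le c_0\|w\|$ and $\|w^u\|\le c_0\|w\|$.

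Given such constants, my plan is the following. Decompose $v=v^s+v^u$. For the stable part I would write directly
\[
\|A^q v^s\|\le C\lambda^{-q}\|v^s\|\le Cc_0\lambda^{-q}\|v\|.
\]
For the unstable part, the key trick is to rewrite $A^q v^u = A^{q-n}(A^n v^u)$, which, since $q-n\le -q$ by the assumption $n\ge 2q$, gives
\[
\|A^q v^u\|\le C\lambda^{-(n-q)}\|A^n v^u\|\le C\lambda^{-q}\|A^n v^u\|\le Cc_0\lambda^{-q}\|A^n v\|,
\]
the last inequality using that $A^n v^u$ is the $E^u$-component of $A^n v$. Adding the two bounds yields
\[
\|A^q v\|\le 2Cc_0\lambda^{-q}\max(\|v\|,\|A^n v\|).
\]
It then suffices to choose $q$ large enough so that $2Cc_0\lambda^{-q}\le 1/4$.

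There is essentially no obstacle here beyond carefully invoking the spectral gap; the whole statement reduces to the uniform contraction of $A^{-1}$ on $E^u$ and of $A$ on $E^s$, combined with the fact that the $A$-invariance of the splitting lets us push the $E^u$-bound backward in time from $A^n v$ to $A^q v$.
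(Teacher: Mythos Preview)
Your argument is correct: the spectral splitting $E^s\oplus E^u$, the standard exponential bounds on each piece, and the trick of writing $A^q v^u=A^{-(n-q)}(A^n v^u)$ with $n-q\ge q$ give exactly the desired estimate. The paper itself leaves this proof to the reader, and what you wrote is precisely the intended straightforward argument.
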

We leave the proof to the reader.


\end{document}